\newfont{\sdbl}{msbm9}
\newfont{\dbl}{msbm10 at 12pt}
\theoremstyle{definition}
\newcommand{\da}{{\mbox{\dbl A}}}
\newcommand{\cn}{{{\cal N}}}
\newcommand{\ci}{{{\cal I}}}
\newcommand{\cb}{{{\cal B}}}
\newcommand{\ca}{{{\cal A}}}
\newcommand{\cl}{{{\cal L}}}
\newcommand{\cg}{{{\cal G}}}
\newcommand{\dpp}{{\mbox{\dbl P}}}
\newcommand{\dz}{{\mbox{\dbl Z}}}
\newcommand{\dn}{{\mbox{\dbl N}}}
\newcommand{\sdz}{{\mbox{\sdbl Z}}}
\newcommand{\sdn}{{\mbox{\sdbl N}}}
\newcommand{\dc}{{\mbox{\dbl C}}}
\newcommand{\dq}{{\mbox{\dbl Q}}}
\newcommand{\ord}{\mathop{\rm ord}\nolimits}
\newcommand{\gr}{\mathop {\rm gr}}
\newcommand{\Der}{\mathop {\rm Der}}
\newcommand{\Pic}{\mathop {\rm Pic}}
\newcommand{\End}{\mathop {\rm End}}
\newcommand{\Div}{\mathop {\rm Div}}
\newcommand{\WDiv}{\mathop {\rm WDiv}}
\newcommand{\Ker}{\mathop {\rm Ker}}
\newcommand{\Ch}{\mathop {\rm Ch}}
\newcommand{\Quot}{\mathop {\rm Quot}}
\newcommand{\dpth}{\mathop {\rm depth}}
\newcommand{\rk}{\mathop {\rm rk}}
\newcommand{\heit}{\mathop {\rm ht}}
\newcommand{\trdeg}{\mathop {\rm trdeg}}
\newcommand{\Spec}{\mathop {\rm Spec}}
\newcommand{\Spf}{\mathop {\rm Spf}}
\newcommand{\Specz}{\mathop {\bf  Spec}}
\newcommand{\Proj}{\mathop {\rm Proj}}
\newcommand\limproj{\mathop{\underleftarrow{\lim}}}
\newcommand\limind{\mathop{\underrightarrow{\lim}}}
\newcommand{\xo}{{\mbox{\em \r{X}}}}
\newcommand{\Hom}{\mathop{\rm Hom}}
\newtheorem{defin}{Definition}[section]
\newtheorem{nt}{Remark}[section]
\newtheorem{pb}{Problem}[section]
\newtheorem{ex}{Example}[section]
\theoremstyle{plain}
\newtheorem{prop}{Proposition}[section]
\newtheorem{theo}{Theorem}[section]
\newtheorem{lemma}{Lemma}[section]
\newtheorem{corol}{Corollary}[section]
\newtheorem{lemmaB}{Lemma}
\newtheorem{ntB}{Remark}
\newcommand{\eqdef}{\stackrel{\rm def}{=}}
\newcommand{\lto}{\longrightarrow}
\newcommand{\Ord}{\mathop {\rm \bf ord}}
\newcommand{\co}{{{\cal O}}}
\newcommand{\cnn}{{{\cal N}}}
\newcommand{\cf}{{{\cal F}}}
\newcommand{\cq}{{{\cal Q}}}
\newcommand{\cm}{{{\cal M}}}
\title{Commuting  differential operators and higher-dimensional algebraic varieties}
\author{Herbert Kurke, \quad Denis Osipov, \quad Alexander Zheglov}
\date{}
\begin{document}

\maketitle

\quad \qquad \qquad
{\em Dedicated to A.~N.~Parshin on the occasion of his 70th birthday}

\begin{abstract}
Several algebro-geometric properties of commutative rings of partial differential operators as well as several geometric constructions are investigated. In particular, we show how to associate a geometric data by a commutative ring of partial differential operators, and we investigate the properties of these geometric data. This construction is in some sense similar to the construction of a formal module of Baker-Akhieser functions. On the other hand, there is a recent generalization of Sato's theory which belongs to the third author of this paper. We compare both approaches to the commutative rings of partial differential operators in two variables. As a by-product we get several necessary conditions on geometric data describing commutative rings of partial differential operators.

\vspace{0.5cm}

\noindent {\bf Keywords} Commuting partial differential operators,
 Algebraic integrable systems, Sato
theory, Algebraic KP theory, Algebraic surfaces, Two-dimensional
local fields

\vspace{0.5cm}

\noindent {\bf Mathematics Subject Classification (2010)} Primary
37K10, 14J60; Secondary 35S99

\end{abstract}

\section{Introduction}

In this paper we study some algebro-geometric properties of commutative rings of partial differential operators (PDO for short).

One of very complicated and intriguing questions appearing in the theory of algebraically integrable systems is how to find explicit examples of certain commutative rings of PDOs. In the case of $n=1$ variable (or, in other words, in the case of commuting ordinary differential operators) this question is related to the method of constructing explicit solutions for various nonlinear integrable equations, for instance KdV or KP. In this case the question is: how to find a ring of commuting ordinary differential operators that contains a pair of monic operators $P,Q$ such that $\dc [P,Q]\not\simeq \dc [u]$? The classification of such rings in the case when the orders of $P,Q$ are relatively prime was obtained already in works of Burchnall and Chaundy \cite{BC} (see also \cite{Ba}) by purely algebraic methods, though the reconstruction of the coefficients of operators was not sufficiently effective. The classification of such rings in general case was given in the work of Krichever \cite{Kr1}, where the connection of this classification with the theory of integrable systems, with the spectral operator theory and with the theory of linear differential equations with periodical coefficients was pointed out. At the same time there appeared a rich theory connected with famous equations such as KP, KdV, sin-Gordon, Toda, etc. (the so called KP theory or the finite-gap theory; for reviews see for example \cite{Kr}, \cite{Man}, \cite{D}, \cite{SW}, \cite{Mul}, \cite{DKN} and references therein).

In the case of operators in $n>1$ variables the question mentioned above can be formulated as follows (see \cite{Kr},\cite{ChV},\cite{ChV3}, cf. also \cite{Ch},\cite{BEGa}):  how to find a ring of commuting partial differential operators that contains $n+1$ operators $L_0,\ldots ,L_n$ with algebraically independent homogeneous constant highest symbols $\sigma_1,\ldots ,\sigma_n$ such that $\dc [\dc^n]$ is finitely generated as a module over the ring generated by $\sigma_1,\ldots ,\sigma_n$ and  $L_0$ is not a polynomial combination of $L_1,\ldots ,L_n$? In the work \cite{Kr} certain statements from $n=1$ case were generalized for such rings.
 In particular, an analogue of the Burchnall-Chaundy lemma was proved there. This analogue says
 that $n+1$ commuting operators $L_0,\ldots ,L_n$ are algebraically dependent. Besides,
it was shown that for a ring of commuting partial differential operators satisfying certain "generic  position" assumptions  there is a unique Baker-Akhieser function (a generator of the module consisting of eigenfunctions of the ring of PDO) that completely characterizes the ring by its spectral variety (in other words, the module is free of rank one). There was also offered a compactification of the spectral surface.

Unfortunately, up to now there are known only a few examples of such rings. The first nontrivial examples  appeared in \cite{ChV}, \cite{ChV2}, \cite{ChV3}. The examples were connected  with the quantum (deformed) Calogero-Moser systems.
Later the ideas of these constructions  were developed in a series of papers (see e.g. \cite{FV}, \cite{FV2}, \cite{EG}) in order to construct more examples (for review see e.g. \cite{Ch} and references therein; cf. also \cite{BEG}, \cite{BEGa}, \cite{BeK}). Let's also mention that the idea to construct a free BA-module (the module consisting of eigenfunctions of the ring of PDO) was developed later by various authors (see e.g. \cite{Na}, \cite{Mi}, \cite{Ch}) to produce explicit examples of commuting matrix rings of PDO.

The results about the commutative rings of PDO from \cite{Kr} can be reformulated by saying that there is a construction that associates to such a ring of commuting operators some algebro-geometric data that consists of a completed (projective) affine spectral variety, the divisor at infinity, a  torsion free sheaf of rank one (or even a family of such sheaves) and some extra trivialisation data (cf. the work \cite{ZhM}, where this construction is written in rank one case in terms of a family of Krichever sheaves).
This geometric construction is an analogue of the construction coming from $n=1$ case.

Unfortunately, unlike the $n=1$ case, it is not known yet which geometric data describe exactly the commutative rings of PDOs. But these data describe commutative rings of completed PDOs in the $n=2$ case.
Namely, for differential operators in two variables  the following approach was offered in \cite{Zhe2}. We consider a wider class of operators: the operators from the completed ring $\hat{D}$ of differential operators (see \cite[Sec.2.1.5]{Zhe2}; see also section \ref{rings} where we recall several definitions and results from loc.cit.). We would like to emphasize that it is not the ring of pseudo-differential operators! The operators from this ring contain all usual partial differential operators, and difference operators as well. They are also linear and act on the ring of germs of analytical functions. In the work \cite{Zhe2} all commutative subrings in $\hat{D}$  satisfying certain mild conditions are classified in terms of Parshin's modified geometric data (such data   include algebraic projective surface, an ample $\dq$-Cartier divisor, a point regular on this divisor and on the surface, a torsion free sheaf on the surface and some extra trivialisation data). Let's mention that such rings contain all subrings of partial differential operators in two variables considered above after an appropriate change of variables (see section 3.1 of \cite{Zhe2}). The approach offered in \cite{Zhe2} generalizes the approach of Sato in dimension one  and differs from the approach connected with the study of the Baker-Akhieser functions. This approach is closely related with the generalization of the Krichever map offered by Parshin in \cite{Pa1}, \cite{Pa}. Let's explain  the history connected with this approach in more details.

As we have already mentioned above, the commutative rings of ODOs are classified in terms of geometric data, whose main geometric object is a projective curve. In the classical KP theory there is a map which associate to each such data a pair of subspaces $(A,W)$ ("Schur pairs") in the space $V=k((z))$, where $A\varsupsetneq k$ is a stabilizer $k$-subalgebra of $W$ in $V$: $A\cdot W\subset W$, and $W$ is a point of the infinite-dimensional Sato grassmannian (see e.g. \cite{Mul} for details). This map is usually called as the Krichever  map in the literature. In works \cite{Pa1}, \cite{Pa} (see also \cite{Os}) Parshin introduced an analogue of the Krichever map which associates to each geometric data (which include a Cohen-Macaulay surface, an ample Cartier divisor, a smooth point
and a vector bundle) a pair of subspaces $(\mathbb{A},\mathbb{W})$ in the two-dimensional
local field associated with the flag (surface, divisor, point) $k((u))((t))$ (with analogous properties). He showed that this map is injective on such data. In works \cite{Pa}, \cite{Os} some combinatorial
construction was also given. This construction helps to calculate cohomology groups of vector bundles in
terms of these subspaces and permits to reconstruct the geometric data from the pair $(\mathbb{A},\mathbb{W})$. The difference of this new Krichever-Parshin map from the Krichever map is that the last map is known to be bijective.

To extend the Krichever-Parshin map and to make it bijective we introduced in the work \cite{Ku} (see also section \ref{ribbons}, where we recall some definitions) new geometric objects called formal punctured ribbons (or simply ribbons for short) and torsion free coherent sheaves on them, we extended this map on the set of new geometric data which include these objects and showed the bijection between the set of geometric data and the set of pairs of subspaces $(\mathbb{A},\mathbb{W})$ (also called  generalized Schur pairs) satisfying certain combinatorial conditions. We also showed that for any given Parshin's geometric data one can construct a unique geometric data with a ribbon, and the initial Parshin's data can be reconstructed from the new data with help of the combinatorial construction mentioned above.

At the same time in the work \cite{Pa0} Parshin offered to consider a multi-variable analogue of the KP-hierarchy which, being modified, is related to algebraic surfaces and torsion free sheaves on such surfaces as well as to a wider class of geometric data consisting of ribbons and torsion free sheaves on them if the number of variables is equal to two (see \cite{Zhe},  \cite[Introduction]{Ku1}). Thus, in the work \cite{Ku1} we described the geometric structure of the Picard scheme of a ribbon. This scheme has a nice group structure and can be thought of as an analogue of the Jacobian of a curve in the context of the classical KP theory. In particular, generalized  KP flows are defined on such schemes.

To classify commutative subrings in the ring $\hat{D}$ in the already mentioned work \cite{Zhe2} the Parshin geometric data were modified: now the surface need not be Cohen-Macaulay, the ample divisor need not be Cartier and the sheaf need not be a vector bundle. Moreover, the classification was established there not only in terms of the modified Parshin data, but also in terms of modified Schur pairs. These are the pairs of subspaces $(A,W)$ in the space $k[[u]]((t))$ satisfying  properties similar to the properties of the Schur pairs $(\mathbb{A},\mathbb{W})$ (see sections \ref{catgemdat} and \ref{mapxi} for more details).

At this point the following natural questions appear:\\
1) Can we extend the construction that associates to each Parshin's data the data with ribbon to the set of modified Parshin's data?\\
2) If yes, what happens if we apply the combinatorial construction that reconstructs the Parshin data from its ribbon's data to  ribbon's data coming from modified Parshin's data? \\
3) What is the relationship between the Schur pairs $(\mathbb{A},\mathbb{W})$ and modified Schur pairs $(A,W)$?

The aim of this paper, except the main aim (formulated in the beginning) to study algebro-geometric properties of commutative rings of PDOs, is to give also the answers on these questions. These answers will be important in order to apply the theory of ribbons developed in \cite{Ku}, \cite{Ku1} to study rings of PDO and their isospectral deformations.

The following picture illustrates the relationship between all above mentioned data (we give here a slightly imprecise account to make the exposition easier):
\begin{equation}
\label{intersection}
\mbox{If $A$ is Cohen-Macaulay ring then $A=\mathbb{A}\cap k[[u]]((t))$, $W=\mathbb{W}\cap k[[u]]((t))$}
\end{equation}
\clearpage
$$
\begin{array}{ccc}
& \{\mbox{Commutative subalgebras of $D$}\} &\\
& \bigcap & \\
& \{\mbox{Commutative subalgebras of $\hat{D}$}\} &\\
\end{array}
$$
$$
\begin{array}{ccc}
&\swarrow  \nearrow \nwarrow \searrow   &\\
\{\mbox{Subspaces $(A,W)$ in $k[[u]]((t))$}\} & \longleftrightarrow & \{\mbox{Modified Parshin's geometric data}\} \\
 \bigcap &&\bigcap  \\
\{\mbox{Subspaces $(\mathbb{A},\mathbb{W})$ in $k((u))((t))$}\} & \longleftrightarrow & \{\mbox{Geometric data with ribbons}\} \\
\end{array}
$$

\bigskip

Summing everything said above about two approaches we come to the following observation: starting from a commutative ring of PDOs (in any $n$ variables) one can naturally construct a data consisting of a projective variety (a completion of the spectral variety), the divisor at infinity and a coherent sheaf (not necessary of rank one) on this projective variety. We start our paper with a construction of this geometric data. This data corresponds to a commutative ring of PDO  satisfying certain conditions (see section \ref{sec2}).

After that we compare in details this construction and the construction given in \cite{Zhe2} in the case of operators in two variables (see section~\ref{sec3}).
 As a result of the comparison of two approaches we find necessary conditions on geometric data from \cite{Zhe2} describing commutative rings of PDOs. We also come to a problem (see problem \ref{problem1}) which is important in solving the problem of classification of commutative rings of PDOs (see section \ref{sec2.1.1})

 In section \ref{ribbons} we show how the construction from \cite{Ku}, \cite{Ku1} can be extended to associate with a geometric data from \cite{Zhe2} a geometric data from \cite{Ku}. So, in particular, one can apply the theory of ribbons developed in \cite{Ku}, \cite{Ku1} to study rings of PDOs and their isospectral deformations.

In section \ref{glueing} we recall the construction of glueing closed subschemes and give several examples that could be useful in solving the problem \ref{problem1}.

In appendix A we recall the construction and properties of the cycle map defined for any singular projective variety. It is used in a basis of almost all our geometric constructions.

In appendix~B we introduce a notion of Cohen-Macaulaysation
of a surface (cf. \cite[sec.3]{Bur}, \cite{Fa}). Namely, we show that for given integral
two-dimensional scheme $X$ of finite type over a field $k$ (or over
the integers) there is a "minimal" Cohen-Macaulay scheme $CM(X)$ and
a finite morphism $CM(X) \rightarrow X$ (and a finite morphism form
the normalization of $X$ to $CM(X)$). This construction generalizes
the known construction of normalisation of a scheme.
Using this construction we show
in section \ref{ribbons} (Prop. \ref{reconstruction}) that the image of the extended map applied
to a ribbon constructed by geometric data from \cite{Zhe2} coincides
with the image of the Parshin map applied to the
Cohen-Macaulaysation of this data. The equation \eqref{intersection} follows immediately from the proof of Prop. \ref{reconstruction}.

Everywhere we assume that a field $k$ has characteristic zero.

{\bf Acknowledgments.} Part of this research was done at the
Mathematisches Forschungsinstitut Oberwolfach during a stay within
the Research in Pairs Programme from January 23 till February~5,
2011. We would like to thank the MFO at Oberwolfach for the
excellent working conditions. We are also grateful to Igor Burban
and Andrey E. Mironov  for many stimulating discussions and useful
references.

We are grateful to the referee for the careful reading the article and suggestions of a lot of improvements for the exposition of our results.

The second author was partially supported by
Russian Foundation for Basic Research (grant no.~14-01-00178-a and no.~12-01-33024 mol\_a\_ved) and by
the Programme for the Support of Leading Scientific Schools of the
Russian Federation (grant no.~NSh-2998.2014.1). The third author was partially supported by the RFBR grant no.~14-01-00178-a, 13-01-00664à and by grant NSh no.~581.2014.1.

\section{Several constructions}
\label{sec2}

In this section we give the geometric construction mentioned in the introduction, i.e. we show how starting from a commutative ring of PDOs (in any $n$ variables) one can naturally construct a data consisting of a projective variety (a completion of the spectral variety), the divisor at infinity and a coherent sheaf (not necessary of rank one) on this projective variety.

We start this section with recalling some facts about rings of partial differential operators satisfying certain mild conditions (as it usually assumed in works about algebraically integrable systems) in sections \ref{generalities}, \ref{coordinates}, \ref{char}. The construction is given in section \ref{geomproperties}.

\subsection{Generalities}
\label{generalities}

Let $R$ be a commutative $k$-algebra, where $k$ is a field of characteristic zero.

Then we have
the $R$-module $\Der_k (R)$ of derivations and
the filtered ring $D(R)$ of $k$-linear differential operators.
By definition, the ring $D(R)$ is generated by $\Der_k(R)$ and $R$ inside the ring ${\End}_k(R)$.
The filtration has the properties:
$$
R = D_0(R)\subset D_1(R)\subset D_2(R)\subset \ldots \mbox{; } \quad D_i(R)D_j(R)\subset D_{i+j}(R) \mbox{; } \quad \Der\nolimits_k (R) = D_1(R) \mbox{.}
$$

The subspaces $D_i(R)$ are defined inductively as sub-$R$-bimodules of $\End_{k}(R)$. By definition, $D_0(R)=\End_{R}(R)=R$, and for $i \ge 0$
$$
D_{i+1}(R)=\{P\in D(R) \mid \mbox{ such that} \quad  [P,f]\in D_i (R) \quad   \mbox{for all} \quad  f\in R\}.
$$

Then we can form the graded ring
$$
gr (D(R))=\bigoplus_{i=0}^{\infty}D_i(R)/D_{i-1}(R)\mbox{,\quad where} \quad D_{-1}(R)=0 \mbox{,}
$$
and for $P\in D_i(R)$ the {\it principal symbol} $\sigma_i(P)=P \mod D_{i-1}(R)$. For $P\in D_i$, $Q\in D_j$ we have that $\sigma_i(P)\sigma_j(Q)=\sigma_{i+j}(PQ)$ and
$[P,Q]\in D_{i+j-1}(R)$. Hence $gr (D(R))$ is a commutative graded $R$-algebra with a Poisson bracket
$$\{\sigma_i(P), \sigma_j(Q)\}=\sigma_{i+j-1}([P,Q])$$
with the usual properties.

\begin{defin}
\label{defin7}
We denote the order function from $D(R)$ to non-negative integers as
$$
\Ord (P)=\inf \{n  \mid  P\in D_n(R)   \}   \mbox{.}
$$
\end{defin}

\subsection{Coordinates}
\label{coordinates}

\begin{defin} \label{defcoord}
We say that $R$ has a system of coordinates $(x_1,\ldots ,x_n)\in R^n$ if the following  condition is satisfied:
the map
$$
{\Der}_k (R) \stackrel{\phi}{\longrightarrow} R^n  \quad \mbox{:} \quad D\longmapsto (D(x_1), \ldots ,D(x_n))
$$
is bijective.
\end{defin}

In this case there are uniquely defined $\partial_1,\ldots ,\partial_n\in {\Der}_k(R)$ such that
$$
\partial_i(x_j)=\delta_{ij} \mbox{.}
$$
Then $\Der (R)$ is a free $R$-module with generators $\partial_1,\ldots ,\partial_n$. Besides, we have $[\partial_i,\partial_j]=0$. One checks (by induction on the grade) that
$$
R[\xi_1, \ldots ,\xi_n] \simeq gr (D(R)) \quad  \mbox{by } \quad \xi_i \mapsto \, \partial_i \hspace{-0.1cm} \mod D_0(R) \, \in gr_1(D(R)) \mbox{.}
$$
Also for $P\in D_i(R)$, $Q\in D_j(R)$ we have
\begin{equation}  \label{pueq}
\{\sigma_i (P), \sigma_j(Q)\} = \sum_{v=1}^n \frac{\partial \sigma_i(P)}{\partial \xi_v}\partial_v(\sigma_j(Q))- \sum_{v=1}^n \frac{\partial \sigma_j(Q)}{\partial \xi_v} \partial_v(\sigma_i(P))
\end{equation}
(where we have extended $\partial_v$ to $R[\xi_1,\ldots ,\xi_n]$ by $\partial_v(\xi_l)=0$).

A typical example of a ring with a coordinate system is the ring $k[x_1,\ldots ,x_n]$ or $k[[x_1,\ldots ,x_n]]$, where in the last case we have to restrict ourself
to the $R$-module of continuous derivations and
to the ring of continuous differential operators with respect to the usual topology on
$k[[x_1,\ldots ,x_n]]$ given by the maximal ideal. The ring $k[[x_1,\ldots ,x_n]]$ will be important for the main part of the article.

If $(y_1,\ldots , y_n) \in R^n$ is another coordinate system, we get a new basis $(\partial_1',\ldots ,\partial_n')$ of ${\Der}_k(R)$. Hence the change of generators is given by the matrix
$$
\left(
\begin{array}{ccc}
\partial_1(y_1)&\ldots &\partial_n(y_1)\\
\partial_1(y_2)&\ldots &\partial_n(y_2)\\
\vdots &\ddots &\vdots \\
\partial_1(y_n)& \ldots &\partial_n(y_n)
\end{array}
\right)
=M \mbox{,}
$$
as $(\partial_1', \ldots ,\partial_n')M=(\partial_1, \ldots ,\partial_n)$,
$(\xi_1', \ldots ,\xi_n')M=(\xi_1, \ldots ,\xi_n)$.

\begin{nt} \label{tangent}
In definition~\ref{defcoord} the  partial derivatives are $\partial_i= \phi^{-1}(0, \ldots, 1, \ldots,0)$.
Therefore this definition implies the trivialization $\phi$ of the tangent sheaf of $\Spec R$ (or of the tangent sheaf of $\Spf R$ when $R = k[[x_1, \ldots, x_n]]$).
Moreover, to say that the ring $R$ has a system of coordinates $(x_1,\ldots ,x_n)\in R^n$ is equivalent to say that there is a map
$$
\psi \, : \, \Spec R \lto \Spec k[x_1, \ldots, x_n]
$$
such that the induced  map of $\co_{\Spec R}$-modules
$$
\phi \, : \,  {\mathcal T}_{\Spec R}  \lto \psi^* {\mathcal T}_{\Spec k[x_1, \ldots, x_n]}
$$
is an isomorphism. (Here ${\mathcal T}_{\Spec R}$ and ${\mathcal T}_{\Spec k[x_1, \ldots, x_n]}$ are the corresponding tangent sheaves).
\end{nt}

\subsection{Characteristic scheme}
\label{char}

If $J\subset D$ is a right ideal, then we obtain a homogeneous ideal $\langle \sigma_i(P), P\in J\rangle$ in $gr (D)$ and a subscheme defined by this ideal either in $\Spec (gr(D))$ or in $\Proj (gr(D))$. Both are called the characteristic subscheme $\Ch (J)$. We consider the characteristic subscheme in $\Proj (gr(D))$.

If we have and fix a coordinate system, then according to remark~\ref{tangent} we have a trivialization $\phi$ of the  tangent sheaf. This trivialization  induces the trivialization of the cotangent sheaf and of the projective cotangent bundle. Hence we obtain
$$\Proj (gr(D))=\Proj (R[\xi_1,\ldots ,\xi_n])=\Spec (R)\times_k\dpp_k^{n-1} \mbox{.}$$
Consider the case of the ideal $J=PD$, where $P$ is an operator with $\Ord (P)=m$. If $\sigma_m(P)\in k[\xi_1,\ldots ,\xi_n]$, then we say that {\it the principal symbol is constant.} In this case the characteristic scheme is essentially given by the divisor of zeros of $\sigma_m(P)$ in $\dpp^{n-1}_k$, we call it $\Ch_0(P)$. It is unchanged by a $k$-linear change of coordinates.

\begin{lemma}
\label{wellness}
If $P_1,\ldots P_n$ are operators with constant principal symbols (with respect to a coordinate system $(x_1,\ldots ,x_n)$) and if $\det (\partial \sigma (P_i)/\partial \xi_j)\neq 0$, then any operator $Q$ with $[P_i,Q]=0$, $i=1,\ldots , n$ has also a constant principal symbol.
\end{lemma}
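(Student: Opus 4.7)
The plan is to convert the commutation relations $[P_i,Q]=0$ into a linear system in the quantities $\partial_v(\sigma(Q))$ and then invert it using the nondegeneracy hypothesis.

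First I would translate commutation into a Poisson-bracket identity. Write $i = \Ord(P_i)$, $j = \Ord(Q)$. Since $\sigma([P_i,Q]) = \{\sigma(P_i),\sigma(Q)\}$ in $\gr(D(R))$ and $[P_i,Q]=0$, we get $\{\sigma(P_i),\sigma(Q)\}=0$ for every $i=1,\dots,n$. Now apply formula \eqref{pueq}. The hypothesis that $P_i$ has constant principal symbol means $\sigma(P_i)\in k[\xi_1,\dots,\xi_n]$, so $\partial_v(\sigma(P_i))=0$ for every $v$ and the second sum in \eqref{pueq} disappears. What remains is
$$
\sum_{v=1}^n \frac{\partial \sigma(P_i)}{\partial \xi_v}\,\partial_v(\sigma(Q))=0, \qquad i=1,\dots,n.
$$

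Next I would treat this as a linear system over $R[\xi_1,\dots,\xi_n]$ in the $n$ unknowns $\partial_v(\sigma(Q))$. Its coefficient matrix is $M=(\partial\sigma(P_i)/\partial\xi_v)_{i,v}$, whose determinant lies in $k[\xi_1,\dots,\xi_n]$ and is assumed to be nonzero. Multiplying by the adjugate of $M$ yields
$$
\det(M)\cdot \partial_v(\sigma(Q))=0 \quad\text{in } R[\xi_1,\dots,\xi_n] \qquad (v=1,\dots,n).
$$
Because $R$ is an integral domain (the relevant cases are $R=k[x_1,\dots,x_n]$ or $R=k[[x_1,\dots,x_n]]$) and $\det(M)\ne 0$, we conclude $\partial_v(\sigma(Q))=0$ for every $v$.

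Finally I would interpret this coefficient-wise. Writing $\sigma(Q)=\sum_\alpha c_\alpha\,\xi^\alpha$ with $c_\alpha\in R$ and using that $\partial_v$ was extended by $\partial_v(\xi_l)=0$, the vanishing $\partial_v(\sigma(Q))=0$ becomes $\partial_v(c_\alpha)=0$ for all multi-indices $\alpha$ and all $v=1,\dots,n$. In characteristic zero, for the coordinate rings actually used in the paper, the intersection $\bigcap_v \ker(\partial_v)$ in $R$ equals $k$, hence every $c_\alpha$ lies in $k$ and $\sigma(Q)\in k[\xi_1,\dots,\xi_n]$, which is what ``constant principal symbol'' means.

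The main obstacle—beyond the routine Poisson-bracket bookkeeping—is the last step: passing from $\partial_v(c)=0$ for all $v$ to $c\in k$. This is harmless for $k[x_1,\dots,x_n]$ and $k[[x_1,\dots,x_n]]$, but for a more general $R$ with a coordinate system in the sense of Definition~\ref{defcoord} one would need to add (or invoke) an assumption that $\bigcap_v\ker(\partial_v)=k$; everything else in the argument is purely formal.
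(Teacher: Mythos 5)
Your proposal is correct and follows essentially the same route as the paper: pass from $[P_i,Q]=0$ to $\{\sigma(P_i),\sigma(Q)\}=0$, note that the constancy of $\sigma(P_i)$ kills the second sum in \eqref{pueq}, and invert the resulting linear system using $\det(\partial\sigma(P_i)/\partial\xi_j)\neq 0$ to get $\partial_v(\sigma(Q))=0$. The only difference is that you make explicit the final step $\bigcap_v\ker(\partial_v)=k$ (which the paper leaves implicit and which indeed holds for the rings $k[x_1,\ldots,x_n]$ and $k[[x_1,\ldots,x_n]]$ actually used), so no gap remains.
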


\begin{proof}
Let $m_i=\Ord (P_i)$ and $m = \Ord (Q)$.
From equality~\eqref{pueq} we have
$$
0=\{\sigma_{m_i}(P_i), \sigma_{m}(Q)\} =\sum_{v=1}^n\frac{\partial \sigma_{m_i}(P_i)}{\partial \xi_v} \partial_v(\sigma_m(Q)).
$$
for $i=1,\ldots ,n$. Since $\det (\partial \sigma_{m_i} (P_i)/\partial \xi_j)\in k[\xi_1,\ldots ,\xi_n]$ is not zero, we infer that
$\partial_j(\sigma_m (Q))=0$  for $j=1,\ldots n$. Hence $Q$ has constant principal symbol with respect to $(x_1,\ldots ,x_n)$.
\end{proof}

For any subring $F \subset D$ we {\em define} a filtration on $F$ which is induced by filtration of $D$: $F_n = F \cap D_n =
\{f \in F \mid \Ord(f) \le n \}$. We define  the ring $\gr (F) = \bigoplus\limits_{n=0}^{\infty} F_n/F_{n-1}$.

\subsection{Geometric properties of commutative rings of PDOs}
\label{geomproperties}

To formulate the main theorem in this section we recall some facts from algebraic geometry.
For any $n$-dimensional irreducible projective variety $X$ over the field $k$, and any Cartier divisors $E_1, \ldots, E_n \in \Div(X)$ on $X$ one defines the intersection index $(E_1 \cdot \ldots \cdot E_n)  \in \dz$ on $X$ (see, e.g.,~\cite{Fu}, \cite[ch.~1.1]{La}.)
Let $(E^n)= (E \cdot \ldots \cdot E)$ be the self-intersection index of a Cartier divisor $E \in \Div(X)$ on $X$, and $\cf$ be a coherent sheaf
on $X$. There is the asymptotic Riemann-Roch theorem (see survey in~\cite[ch.~1.1.D]{La}) which says that  the Euler characteristic
$\chi(X, \cf \otimes_{\co_X} \co_X(mE))$ is a polynomial of degree $\le n$ in $m$, with
\begin{equation}  \label{rrf}
\chi(X, \cf \otimes_{\co_X} \co_X(mE)) = \rk(\cf) \cdot \frac{(E^n)}{n!} \cdot m^n + O(m^{n-1}) \mbox{,}
\end{equation}
where $\rk$ is the rank of sheaf.

There is the cycle map: ${\rm Z} : \Div(X) \to \WDiv(X)$ from the Cartier divisors to the Weil divisors on $X$ (see appendix~A).
From~\cite[Ch.~2]{Fu}
 it follows that if $E_1, E_2 \in \Div(X)$ such that ${\rm Z}(E_1) = {\rm Z}(E_2)$, then the self-intersection indices $(E_1^n)= (E_2^n)$ on $X$.

The cycle map $\rm Z$ restricted to the semigroup of effective Cartier divisors $\Div^+(X)$ is an injective map to the semigroup of effective Weil divisors $\WDiv^+(X)$ not contained in the singular locus. We {\em will say } that  an effective Weil divisor $C$ on $X$ not contained in the singular locus is a {\em $\dq$-Cartier} divisor on $X$ if $lC \in { \rm Im} \, ({\rm Z} \mid_{\Div^+(X)})$ for some integer $l >0$.

\begin{defin}
Let $C$ be a $\dq$-Cartier divisor on $X$. We define the self-intersection index $(C^n)$ on $X$ as
\begin{equation} \label{intin}
(C^n) = (G^n)/ l^n \mbox{,}
\end{equation}
where $G= lC$ is a Cartier divisor for some integer $l >0 $.
\end{defin}
We note that if $l >0$ is  minimal  such that $lC$ is a Cartier divisor, then for any other $l' > 0 $ with the property $l'C$ is a Cartier divisor
we have that  $l \mid l'$. Therefore, using above reasonings and the property  $(E_1^n)= m^n(E_2^n)$ for any $E_1= m E_2$, $ E_2 \in \Div(X)$, $m \in \dz$
we obtain that formula~\eqref{intin} does not depend on the choice of appropriate $l$.

\smallskip

\begin{theo}
\label{techn5.2}
 Let $P_1,\ldots ,P_n\in D=k[[x_1,\ldots ,x_n]][\partial_1,\ldots ,\partial_n]$ be any commuting operators of positive order.  Let $B$ be any commutative $k$-subalgebra in $D$ which contains the operators $P_1,\ldots ,P_n$.
Assume that
the intersection of the characteristic divisors of $P_1,\ldots ,P_n$ is empty.

Then the map from $gr(D)$ to $gr(D)/x_1gr(D) + ... + x_ngr(D) = k[ \xi_1, ..., \xi_n]$ induces an embedding on $gr(B)$ and we also have the following properties.
\begin{enumerate}
\item \label{fg}
 $ k[\xi_1,\ldots ,\xi_n]$ is finitely generated as $\gr (B)$-module.
\item
The rings $B$ and $\gr B$ are finitely generated integral $k$-algebras of Krull dimension $n$.
\item \label{uni}
The affine variety $U = \Spec \, B$ over $k$ can be naturally completed to an $n$-dimensional irreducible projective variety $X$ with boundary $C$ which is an integral Weil divisor not contained in the singular locus of $X$. Moreover, $C$ is an unirational and ample $\dq$-Cartier   divisor.
\item The $B$-module $L = D/  x_1 D + \ldots +  x_n D$, which defines a coherent sheaf on $U$, can be naturally extended to a torsion free coherent
sheaf $\cl$ on $X$. Moreover, the self-intersection index $(C^n)$ on $X$  is equal to $\delta^n/\rk (\cl)$, where 
\begin{equation}
\label{delta}
\delta= {\rm gcd} \ \{n   \mid  B_n/B_{n-1} \ne 0 , \, n\ge 1\} \mbox{.}
\end{equation}

\end{enumerate}
\end{theo}
\begin{proof}  We prove items $1$ and $2$. Let $m_i=\Ord (P_i)$. Denote by $\sigma_{m_i}'(P_i)$ the images of $\sigma_{m_i}(P_i)$ in $k[\xi_1,\ldots ,\xi_n]$.
 Now $(\sigma_{m_1}'(P_1),\ldots ,\sigma_{m_n}'(P_n)):\da^n\rightarrow \da^n$ is a finite morphism by Hilbert's Nullstellensatz, since the system of equations $\sigma_{m_1}'(P_1) = 0, \ldots, \sigma_{m_n}'(P_n)=0 $ defines only the zero point in $\da^n$ because of our assumption. In particular, $\det (\partial \sigma_{m_i}'(P_i)/\partial \xi_j)\neq 0$ (via the interpretation as the map on the tangent space) and therefore the matrix $(\partial \sigma_{m_i}(P_i)/\partial \xi_j)$ is invertible over the ring $k[[x_1,\ldots ,x_n]]((\xi_1^{-1}))\ldots ((\xi_n^{-1}))$.

Let's show that for any element $Q\in B$ the image $\sigma_{\Ord (Q)}' (Q)$ of $\sigma_{\Ord (Q)} (Q)$ in $k[\xi_1,\ldots ,\xi_n]$ is not zero, i.e. $gr (B)$ is embedded in $k[\xi_1,\ldots ,\xi_n]$ and $L$ is a torsion free $B$-module. Assume the converse, and let $N>0$ be the minimal value of the discrete valuation with respect to the maximal ideal in $k[[x_1,\ldots ,x_n]]$ on the coefficients of $\sigma_{\Ord (Q)} (Q)\in k[[x_1,\ldots ,x_n]][\xi_1,\ldots ,\xi_n]$. Since $[P_i,Q]=0$ for $i=1,\ldots ,n$, the equality $\{\sigma_{m_i}(P_i), \sigma_{\Ord (Q)} (Q)\}=0$ holds. On the other hand, from equality~\eqref{pueq} we have
$$
\{\sigma_{m_i}(P_i), \sigma_{\Ord (Q)} (Q)\}=\sum_{v=1}^n\frac{\partial \sigma_{m_i}(P_i)}{\partial \xi_v}\partial_v(\sigma_{\Ord (Q)} (Q)) \mod (x_1,\ldots ,x_n)^N
$$
Thus, we must have that $\partial_v(\sigma_{\Ord (Q)} (Q))=0 \mod (x_1,\ldots ,x_n)^N$ for all $v$, because the matrix $(\partial \sigma_{m_i}(P_i)/\partial \xi_j)$ is invertible also over the ring $T((\xi_1^{-1}))\ldots ((\xi_n^{-1}))$,
where $T=k[[x_1,\ldots ,x_n]]/ (x_1,\ldots ,x_n)^N$.
We obtained  a contradiction, since $N$ was chosen minimal.

Now we have
\begin{equation}  \label{emb}
k[\sigma_{m_1}'(P_1),\ldots ,\sigma_{m_n}'(P_n)]\subset \gr (B)\subset k[\xi_1,\ldots ,\xi_n].
\end{equation}
Hence $B_0 =k$.
But $k[\xi_1, \ldots ,\xi_n]$ is finitely generated  as $k[\sigma_{m_1}'(P_1),\ldots ,\sigma_{m_n}'(P_n)]$-module. Therefore the $k$-algebra $\gr B$ is a finitely generated $k$-algebra of Krull dimension $n$. Besides,  $k[\xi_1,\ldots ,\xi_n]$ is finitely generated as $\gr B$-module.
From~\eqref{emb}  it follows  that $\gr B$ is a ring without zero divisors. Hence the ring $B$ itself is without zero divisors.

 It will be useful to introduce the analog of the Rees ring $\tilde{B}$ constructed by the filtration on the ring $B$: $\tilde{B}=\bigoplus\limits_{n=0}^{\infty} B_n s^n$. The ring $\tilde{B}$ is a subring of the polynomial ring $B[s]$.
 For the fields of fractions we have $\Quot \tilde{B} = \Quot B[s]$.
 Besides, $\gr B=\tilde{B}/(s)$.
 Let
 the $k$-algebra $\gr(B)$ be generated by elements $\sigma_{m_i}(b_i)$, $i=1, \ldots, p$ as $k$-algebra,
 where $\Ord(b_i) = m_i$. It is easy to check that the $k$-algebra $B$ is generated by the elements $b_i$, $i=1, \ldots, p$ as  $k$-algebra,
 and the $k$-algebra $\tilde{B}$ is generated by the elements $s, b_1s^{m_1}, \ldots, b_rs^{m_p}$ as $k$-algebra.
  Hence we can compute   the Krull dimension of the ring $B$:
 \begin{equation}  \label{dim}
 \dim B=\trdeg \Quot B=\trdeg \Quot \tilde{B}-1=\trdeg \Quot (\tilde{B}/(s))=\trdeg \Quot (\gr B)=n  \mbox{,}
  \end{equation}
  since $(s)$ is a prime ideal of height $1$ in the ring $\tilde{B}$ by Krull's height theorem.

\smallskip

We prove now item $3$.
The ideal $I= \bigoplus\limits_{n=1}^{\infty} B_{n-1} s^n = (s) $ is a homogeneous  ideal in the ring $\tilde{B}$, because this ideal is generated by the homogeneous element $s \in \tilde{B}$. Besides, $I$ is a prime ideal, since  $\tilde{B}/I = \gr B$ is a ring without zero divisors.

We introduce the schemes $X= \Proj \, \tilde{B}$ and $C=\Proj \, \tilde{B}/I=\Proj  \, \gr (B)$.
Since $\tilde{B}$ and $\gr (B)$ are integral $k$-algebras, $X$ and $C$ are integral schemes.
Therefore, using~\eqref{dim}, we have that the homogeneous prime ideal $I$ defines an irreducible subscheme $C$ of codimension $1$ on $X$. Moreover,
$X \setminus C = \Spec \, \tilde{B}_{(s)} = \Spec B$ is an affine variety. (Here $\tilde{B}_{(s)}$ is the subring of degree zero elements in the localization $\tilde{B}_s$ of the ring $\tilde{B}$ by the multiplicative system $s^n$, $n \in \dz$).

For any $n \ge 0$ we denote  the homogeneous component $\tilde{B}_n= B_{n}s^n \subset \tilde{B}$.
Since $\tilde{B}$ is a finitely generated $k$-algebra with $\tilde{B}_0=k$, by~\cite[Ch.III, \S~1.3, prop.~3]{Bu} there exists an integer $d \ge 1$ such that the 
$k$-algebra $\tilde{B}^{(d)}=\bigoplus\limits_{k=0}^{\infty} \tilde{B}_{kd}$ is finitely generated by elements from $\tilde{B}_1^{(d)}$ as a graded $k$-algebra. (Here $\tilde{B}_1^{(d)}= \tilde{B}_d$, and $\dim_k \tilde{B}_1^{(d)} < \infty$ by formula~\eqref{emb}.) Therefore the scheme $\Proj \, \tilde{B}^{(d)}  \hookrightarrow \Proj \, {\rm Sym}_k (\tilde{B}_1^{(d)}) \simeq \dpp^N_k$ is a projective scheme over $k$ which is an irreducible variety. Clearly, $\delta |d$, since $\tilde{B}^{(d)}$ is a graded $k$-algebra. 

Let us show  that $(d/\delta )C$ is a very ample effective Cartier divisor on $X$.
We consider the subscheme $C'$ in $X$ which is  defined by the homogeneous ideal $I^d=(s^d)$ of the ring $\tilde{B}$.  The topological space of the subscheme $C'$ coincides with the topological space of the subscheme $C$ (as it can be seen on an affine covering of $X$). The function $-\ord / \delta : (\Quot B)^* \to \dz$ is a surjective function which defines the discrete valuation on the field $\Quot B$. The local ring $\co_{X,C}$ coincides with the valuation ring of this discrete valuation:
$$
\co_{X,C} = \tilde{B}_{(I)}= \{  a s^n / b s^n  \, \mid \,  n \ge 0,  a \in B_n, \,  b \in B_n \setminus B_{n-1} \} \mbox{.}
$$
The ideal $I^{\delta}$ induces the maximal ideal in the ring $\co_{X,C}$, and the ideal $I^{d}$ induces the $d/\delta$-th power of the maximal ideal.
 Therefore, if we will prove that the ideal $I^d$ defines an effective Cartier divisor on $X$, then the cycle map on this divisor is equal to $(d/\delta )C$ (see appendix~A), i.e. $C$ is a $\dq$-Cartier divisor.
 By~\cite[prop. 2.4.7]{EGAII} we have $X=\Proj \, \tilde{B}\simeq \Proj \, \tilde{B}^{(d)}$. Under this isomorphism the subscheme $C'$ is defined by the homogeneous ideal $I^d \cap \tilde{B}^{(d)}$ in the ring $\tilde{B}^{(d)}$. This ideal is generated by the element $s^d\in \tilde{B}_1^{(d)}$.
The open affine subsets $D_+(x_i)=\Spec \, \tilde{B}^{(d)}_{(x_i)}$ with $x_i\in \tilde{B}_1^{(d)}$ define a covering of $\Proj \, \tilde{B}^{(d)}$. In every ring $\tilde{B}^{(d)}_{(x_i)}$ the ideal $(I^{d} \cap \tilde{B}^{(d)})_{(x_i)}$ is generated by the element $s^d/x_i$. Therefore the homogeneous ideal
$I^{d} \cap \tilde{B}^{(d)}$ defines an effective Cartier divisor.

At last, the Cartier divisor $(d/\delta )C$ is a  very ample divisor, because $C'$ is a hyperplane section in the embedding $X=\Proj \, \tilde{B}^{(d)}\hookrightarrow \Proj \, {\rm Sym}_k (\tilde{B}_1^{(d)}) \simeq \dpp^N_k$. Besides, by item $1$, $k[\xi_1,\ldots \xi_n] \supset \gr B$, and $k[\xi_1,\ldots \xi_n]$ is a finite  $\gr (B)$-module. Hence the divisor $C=\Proj\,  \gr (B)$ is an unirational variety.

Since $\co_{X,C}$
is a regular local ring, the divisor $C$ is not contained in the singular locus of~$X$.

\smallskip

We prove now item $4$.
Let's define the sheaf $\cl$. Consider the right $D$-module $$L=D/(x_1D+\ldots +x_nD)$$
with filtration $L_n=(D_n+x_1D+\ldots +x_nD)/(x_1D+\ldots +x_nD)$. Then we have $L_nB_r\subseteq L_{r+n}$.

We consider another right $D$-module $k[\xi_1, \ldots, \xi_n]$ with the action of $D$ given (generated) as:
$$
f \circ \partial_j = f\xi_j    \qquad  \mbox{,} \qquad  g \circ x_i = - \frac{\partial g}{\partial \xi_i}
$$
for any $1 \le i,j \le n$, $f, g \in k[\xi_1, \ldots, \xi_n]$. It is easy to check that the maps
$$ k[\xi_1, \ldots, \xi_n]  \lto L \quad    : \quad
\sum_{\alpha \in \mathbb{N}^n} a_{\alpha} \xi^{\alpha}  \mapsto \sum_{\alpha} a_{\alpha}
  \partial^{\alpha}  $$
 $$ L \lto k[\xi_1, \ldots, \xi_n]  \quad    : \quad \sum_{\alpha \in \mathbb{N}^n}  p_{\alpha}(x) \partial^{\alpha} \mapsto \sum_{\alpha}
 p_{\alpha}(0)  \xi^{\alpha} \mbox{,} $$
 where $a_{\alpha} \in k$, $p_{\alpha} \in k[[x_1, \ldots, x_n]]$,
 $
 \xi^{\alpha}= \xi_1^{\alpha_1} \ldots \xi_n^{\alpha_n}$,
  $\partial^{\alpha}= \partial_1^{\alpha_1} \ldots \partial_n^{\alpha_n} $,
 are isomorphisms of the corresponding $D$-modules (and hence $B$-modules). The filtration on $k[\xi_1, \ldots, \xi_n]$ is the degree filtration
 of the polynomials. Therefore we have that for any integer $m \ge 0$
 \begin{equation} \label{fdim}
 \dim_k L_m ={m+n \choose n} =\frac{(m+1) \cdot \ldots \cdot (m+n)}{n!}  \mbox{.}
 \end{equation}
 Moreover,
 $\gr (L)= \bigoplus\limits_{n=1}^{\infty} L_n/L_{n-1} =k[\xi_1,\ldots ,\xi_n]$ is a finitely generated $\gr (B)$-module (see item~\ref{fg}).
 Now we have by induction on the degree of filtration that if elements $\sigma_{m_1}(v_1), \ldots, \sigma_{m_s}(v_q)$ (where $v_i \in L_{m_i}$,
 $\sigma_{m_i}(v_i)= v_i \mod L_{m_i-1}$, $i=1, \ldots, q$)  generate  $\gr(L)$ as a  $\gr(B)$-module, then elements $v_1, \ldots, v_q$ generate $L$ as $B$-module.
   Hence we obtain that $\tilde{L}=\bigoplus\limits_{m=0}^{\infty}L_m s^m$ is a finitely generated torsion free graded $\tilde{B}$-module which is generated
    by elements $v_1 s^{m_1}, \ldots , v_q s^{m_q}$ over the ring $\tilde{B}$.
    Therefore  $\cl =\Proj \tilde{L}$ is a torsion free coherent sheaf\footnote{Here and later in the article we use the non-standard notation $\Proj $ for the quasi-coherent sheaf associated with a graded module. If $M$ is a  filtered module, then we use the notation $\tilde{M}=\bigoplus\limits_{i=0}^{\infty} M_is^i$ for the analog of the Rees module, as well as for  filtered rings.} on $X$
    (see~\cite[prop.~2.7.3]{EGAII}).
    Besides, the graded $\gr B$-module $\gr L$ defines the torsion free coherent sheaf over $C= \Proj \, \gr B$, and the $B= \tilde{B}_{(s)}$-module
    $L=\tilde{L}_{(s)}$ defines the torsion free coherent sheaf over $X \setminus C = \Spec B$.

    We have $X = \Proj \tilde{B}^{(d)}$. Under this isomorphism the graded $\tilde{B}^{(d)}$-module
    $\tilde{L}^{(d)}=\bigoplus\limits_{k=0}^{\infty} \tilde{L}_{kd} $ (where $\tilde{L}_{kd}= L_{kd}s^{kd}$) gives the coherent sheaf $\cl$
    as $\Proj \tilde{L}^{(d)}$. We have proved that $C'=(d/\delta )C$ is a very ample Cartier divisor on the projective variety $X$.
    Therefore, by~\cite[ch.~III, th.~5.2]{Ha},
    $$
    H^i (X, \cl \otimes_{\co_X} \co_X(mC')) =0  \quad \mbox{for} \quad i>0 \quad \mbox{and} \quad m \gg 0 \mbox{.}
    $$
    Also, by~\cite[ch.~II, exerc.~5.9(b)]{Ha}, $H^0 (X, \cl \otimes_{\co_X} \co_X(mC')) = \tilde{L}_{md} $ for $m \gg 0$. Hence and from
    formula~\eqref{fdim} we obtain
    $$
    \chi(X, \cl \otimes_{\co_X} \co_X(mC')) = \frac{(md+1) \cdot \ldots \cdot (md+n)}{n!} \quad \mbox{for} \quad m \gg 0 \mbox{.}
    $$
    From formula~\eqref{rrf} we have that the self-intersection index $({C}'^n)=d^n/\rk(\cl)$ on $X$. Hence, the self-intersection index
    $(C^n) = \delta^n/ \rk(\cl)$ on $X$.
    \end{proof}

\begin{nt}
The items $1$ and partially item $2$ of theorem~\ref{techn5.2} follow also from \cite[Ch.III, \S 2.9, Prop.~10]{Bu}. The item $2$ was proved in~\cite{Kr} by Krichever in connection with integrable systems. We gave here an alternative proof in the spirit of pure commutative algebra.

The sheaf $\cl$ is a Krichever sheaf in the sense of \cite[introduction]{ZhM}.
It is in some sense similar to the sheaf from family of Krichever sheaves (or Baker-Akhieser modules), confer \cite{ZhM}.
\end{nt}

\begin{nt}
The sheaf $\cl$ determines the bundle of eigenfunctions of operators of $B$ on an open part of $X$ as follows (we assume $k$ is algebraically closed). For $U=\Spec (B)$ the points $p\in U(k)$ correspond one to one to the characters $\chi_p$ of the $k$-algebra $B$ (i.e. $k$-algebra morphisms $\chi : B \rightarrow k$ given by maximal ideals of the ring $B$). To the coherent sheaf $\cl$ we have the associated affine scheme $\Specz (\cl )$ together with the morphism $\pi : \Specz (\cl ) \rightarrow X$, see \cite[ex.5.17, ch. II]{Ha} (we'll call it the associated bundle, it is really a vector bundle over an open part of $X$, where $\cl$ is locally free). Recall that this scheme is constructed by glueing schemes $\Spec (Sym(\cl (V)))$ for open affine sets $V$.
In particular, for $p\in U$ we have
$$
\pi^{-1} (p)={\Hom}_k (\cl (p),k)={\Hom}_B(L,k(p))
$$
(here $\cl (p)$ is the fiber of the coherent sheaf $\cl$ at the point $p$, i.e. $\cl (p) = \cl_p \otimes_{\co_{X,p}} k(p)$). These spaces are naturally isomorphic to the spaces
$$
V(\chi_p)=\{f\in R \mbox{,} \ P(f)=\chi_p(P)f \mbox{\quad for all $P\in B$} \}
$$
as follows:

(i) we have $R\simeq \Hom_{k}(L,k)$ (as vector spaces) by
$$
f\mapsto \lambda_f, \mbox{\quad} \lambda_f(P)=P(f)(0)
$$
$$
\lambda \mapsto f_{\lambda} =\sum_{v}\frac{1}{v !}\lambda (\partial^{v})x^{v}
$$
with $v =(v_1,\ldots ,v_n)\in \dn^n$, $v!=v_1!v_2!\ldots v_n!$ and $\partial^v=\partial_1^{v_1}\ldots \partial_n^{v_n}$, $x^v=x_1^{v_1}\ldots x_n^{v_n}$.

(ii) fixing a point $p\in U(k)$, the field $k=k(p)$ gets a $B$-module structure, and the isomorphism (i) gives
$$
V(\chi_p)\simeq {\Hom}_{B}(L, k(p))\subset {\Hom}_{k}(L,k).
$$
Thus, the bundle $\Specz (\cl )|U$ is the bundle of eigenfunctions of $B$, and we can consider $X$, $\Specz (\cl )$ as prolongation of the spectrum and of the bundle of eigenfunctions to infinity.
\end{nt}

\section{Operators in two variables}
\label{sec3}

In this section we work with the ring $D=k[[x_1,x_2]][\partial_1,\partial_2]$ of PDOs in two variables.

We deduce several properties of geometric data which classify the subrings in $\hat{D}$ and of those data which correspond to subrings in $D$, then establish a connection of any data with geometric data consisting of ribbons and torsion free sheaves introduced in \cite{Ku},\cite{Ku1}. At the end we write about the glueing construction of subschemes in a scheme and give some examples.

We start with recalling some definitions, theorems and constructions from \cite{Zhe2} (see sections \ref{catgemdat}, \ref{mapxi}, \ref{rings}). All results about various geometric properties are contained in section \ref{sec2.1.1}, the connection of geometric data with ribbon's data is given in section \ref{ribbons}, the glueing construction is given in section \ref{glueing}.

\subsection{Category of geometric data}
\label{catgemdat}

We recall that the ring $k[[u,t]]$ has a natural linear topology, where the base of neighbourhoods of zero is generated by the powers of the maximal ideal of this ring.

On the two-dimensional local field $k((u))((t))$ we will consider the following discrete valuation of rank two
$\nu \, : \, k((u))((t))^* \to \dz \oplus \dz $:
$$
\nu (f) = (m,l)  \quad \mbox{iff} \quad f= t^lu^m f_0 \mbox{, where} \quad f_0 \in k[[u]]^*+t k((u))[[t]] \mbox{.}
$$
(Here $k[[u]]^*$ means the set of invertible elements in the ring $k[[u]]$.)

We recall several definitions and results from  \cite{Zhe2}.
\begin{defin}
\label{geomdata}
We call $(X,C,P,\cf ,\pi , \phi )$ a geometric data of rank $r$ if it consists of the following data:
\begin{enumerate}
\item\label{dat1}
$X$ is a reduced irreducible projective algebraic surface defined over a field $k$;
\item\label{dat2}
$C$ is a reduced irreducible ample $\dq$-Cartier  divisor on $X$;
\item\label{dat3}
$P\in C$ is a closed $k$-point, which is
regular on $C$ and on $X$;
\item\label{dat4}
$$
\pi : \widehat{\co}_{P}\longrightarrow k[[u,t]]
$$
is a local $k$-algebra homomorphism satisfying the following
property. If $f$ is a local equation of the curve $C$ at $P$, then
$\pi (f)k[[u,t]] = t^rk[[u,t]]$ and the induced map $\pi :
\co_{C,P}=\co_P/(f) \rightarrow k[[u]] = k[[u,t]]/(t)$ is an
isomorphism. (The definition of $\pi$ does not depend on the choice
of appropriate $f$. Besides, from this definition it follows that
$\pi$ is an embedding, $k[[u,t]]$ is a free $\widehat{\co}_P$-module
of rank $r$ with respect to $\pi$. Moreover for any element $g$ from
the maximal ideal $\cm_P$ of $\co_P$ such that elements $g$ and $f$
generate $\cm_P$ we obtain that $\nu(\pi(f))=(0,r)$,
$\nu(\pi(g))=(1,0)$.)

\item\label{dat5}
$\cf$ is a torsion free quasi-coherent sheaf on $X$.
\item\label{dat6}
$\phi :{\cf}_P \hookrightarrow k[[u,t]]$ is an ${\co}_P$-module embedding
subject to the following condition for any $n \ge 0$ (we note that by item~\ref{dat4} of this definition, $k[[u,t]]$ is an ${\co}_P$-module with respect to $\pi$).
By item~\ref{dat2} there is the minimal natural number $d$ such that $C'=dC$ is a very ample divisor on $X$. Let $\gamma_n : H^0(X, \cf (nC'))\hookrightarrow {\cf}(nC')_P$ be an embedding (which is an embedding, since $\cf (nC')$ is a torsion free quasi-coherent sheaf on $X$).
Let $\epsilon_n : {\cf}(nC')_P \to \cf_P$ be the natural ${\co}_P$-module isomorphism  given by multiplication to an element $f^{nd} \in {\co}_{P}$, where $f \in {\co}_{P}$ is chosen as in item~\ref{dat4}. Let $\tau_n : k[[u,t]] \rightarrow k[[u,t]]/(u,t)^{ndr+1}$ be the natural
ring epimorphism. We demand that the map
$$    \tau_n \circ \phi \circ  \epsilon_n \circ \gamma_n \, : \, H^0(X, \cf (nC'))   \lto  k[[u,t]]/(u,t)^{ndr+1}$$
is an isomorphism. (These conditions on the map $\phi$ do not depend on the choice of the appropriate element $f$.)
\end{enumerate}
Two geometric data $(X,C,P,\cf ,\pi_1 , \phi_1 )$ and $(X,C,P,\cf ,\pi_2 , \phi_2 )$ are identified if the images of the embeddings
(obtained by means of multiplication to $f^{nd}$ as above)
$$
H^0(X, \cf (nC'))\hookrightarrow {\cf}_P\stackrel{\phi_1}{\hookrightarrow} k[[u,t]], \mbox{\quad} H^0(X, \co (nC'))\hookrightarrow \widehat{\co}_P\stackrel{\pi_1}{\hookrightarrow} k[[u,t]]
$$
and
$$
H^0(X, \cf (nC'))\hookrightarrow {\cf}_P\stackrel{\phi_2}{\hookrightarrow} k[[u,t]], \mbox{\quad} H^0(X, \co (nC'))\hookrightarrow \widehat{\co}_P\stackrel{\pi_2}{\hookrightarrow} k[[u,t]]
$$
coincide for any $n \ge 0$.
The set of all quintets of rank $r$ is denoted by $\cq_r$.
\end{defin}

\begin{nt}
We would like to emphasize that the rank $r$ of the geometric data in general differs from the rank of the sheaf $\cf$, see remark \ref{primer}.

If $\cf_P$ is a free $\co_P$-module of rank $r$, then $\phi$ induces an isomorphism $\widehat{\cf}_P\simeq k[[u,t]]$ of $\widehat{\co}_P$-modules.
This condition is satisfied if $\cf$ is a coherent sheaf of rank $r$, see corollary \ref{freemod} below.
\end{nt}

\begin{defin}
\label{geomcategory}
We define a category $\cq$ of geometric data as follows:
\begin{enumerate}
\item\label{cat1}
The set of objects is defined by
$$
Ob (\cq )=\bigcup_{r\in \sdn} \cq_r.
$$
\item\label{cat2}
A morphism
$$
(\beta , \psi ) \, : \, (X_1,C_1,P_1,\cf_1 ,\pi_1 , \phi_1 ) \longrightarrow (X_2,C_2,P_2,\cf_2 ,\pi_2 , \phi_2 )
$$
of two objects
consists of a morphism $\beta :X_1\rightarrow X_2$ of surfaces and a homomorphism $\psi :\cf_2\rightarrow \beta_*\cf_1$ of sheaves on $X_2$ such that:
\begin{enumerate}
\item\label{cat1.1}
$\beta |_{C_1} :C_1\rightarrow C_2$ is a morphism of curves;
\item\label{cat1.2}
$$
\beta (P_1)=P_2.
$$
\item\label{cat1.3}
There exists a continuous ring isomorphism $h:k[[u,t]] \rightarrow k[[u,t]]$ such that
$$
h(u)=u \mbox{\quad mod \quad} (u^2)+(t), \mbox{\quad} h(t)=t \mbox{\quad mod \quad} (ut)+(t^2),
$$
and the following commutative diagram holds:
$$
\begin{diagram}
\node{k[[u,t]]}   \arrow{e,t}{h} \node{k[[u,t]]}  \\
\node{\widehat{\co}_{X_2, P_2}}  \arrow{n,l}{\pi_2}  \arrow{e,b}{\beta_{P_1}^{\sharp}}
\node{\widehat{\co}_{X_1, P_1}}   \arrow{n,r}{\pi_1}
\end{diagram}
$$
\item\label{cat1.4}
Let's denote by ${\beta}_*(\phi_1)$ a  composition of morphisms of ${\co}_{P_2}$-modules
$${\beta}_*(\phi_1): {\beta_*\cf_1}_{P_2} \rightarrow {\cf_{1}}_{P_1}\hookrightarrow k[[u,t]].$$
There is a $k[[u,t]]$-module isomorphism $\xi :k[[u,t]] \simeq h_*(k[[u,t]])$  such that the following commutative diagram of morphisms of ${\co}_{P_2}$-modules holds:
$$
\begin{diagram}
\node{{\cf_2}_{P_2}} \arrow{e,t}{\psi}
\arrow{s,l}{\phi_2}
\node{{\beta_*\cf_1}_{P_2}}   \arrow{s,r}{{\beta}_*(\phi_1)} \\
\node{k[[u,t]]}  \arrow{e,b}{\xi} \node{h_*(k[[u,t]])=k[[u,t]]}
\end{diagram}
$$
\end{enumerate}
\end{enumerate}
\end{defin}

\subsection{Modified Schur pairs}
\label{mapxi}

Given a geometric data $(X,C,P,\cf ,\pi , \phi )$  (see section~\ref{catgemdat}) we will construct a pair of $k$-subspaces
$$W,A\subset k[[u]]((t))$$
such that $ A W \subset W$ and $A$ is a $k$-algebra  with unity (cf. also~\cite[Def.3.15]{Zhe2}).
These subspaces $(A,W)$ are the modified Schur pairs mentioned in introduction\footnote{The spaces $(A,W)$ were called  Schur pairs in \cite[Def.3.12]{Zhe2}.}.

Let $f^d$ be a local generator of the ideal $\co_X(-C')_P$, where $C'=dC$ is a very ample Cartier divisor (see also definition  \ref{geomdata}, item \ref{dat6}). Then $\nu (\pi (f^d))=(0,r^d)$ in the ring $k[[u,t]]$ and therefore  $\pi (f^d)^{-1}\in k[[u]]((t))$. So, we have natural embeddings for any $n >0$
$$
H^0(X, \cf (nC'))\hookrightarrow {\cf (nC')}_P\simeq f^{-nd} ({\cf }_P) \hookrightarrow k[[u]]((t)) \mbox{,}
$$
where the last embedding is the embedding $f^{-nd}{\cf }_P \stackrel{\phi }{\hookrightarrow } f^{-nd} k[[u,t]] {\hookrightarrow} k[[u]]((t))$ (see also definition \ref{geomdata}, item~\ref{dat6}). Hence we have the embedding
$$
\chi_1 \; : \; H^0(X\backslash C, \cf )\simeq \limind_{n >0} H^0(X, \cf (nC')) \hookrightarrow k[[u]]((t)) \mbox{.}
$$
We define now $$W \eqdef \chi_1(H^0(X\backslash C, \cf )) \mbox{.}$$
Analogously the embedding $H^0(X\backslash C, \co )\hookrightarrow k[[u]]((t))$ is defined (and we will denote it also by $\chi_1$). We define also
$$A \eqdef \chi_1(H^0(X\backslash C, \co )) \mbox{.} $$

As it follows from this construction,
\begin{equation}
\label{qwerty}
A\subset k[[u']]((t')) \subset k[[u]]((t)),
\end{equation}
where $t'=\pi (f)$, $u'=\pi (g)$ (see also definition \ref{geomdata}, item \ref{dat4}). Thus, on $A$ there is a filtration $A_n$ induced by the filtration ${t'}^{-n}k[[u']][[t']]$ on the space $k[[u']]((t'))$:
$$
A_n = {A \cap {t'}^{-n}k[[u']][[t']]}= {A \cap {t}^{-nr}k[[u]][[t]]}  \mbox{.}
$$
We have $X\simeq \Proj (\tilde{A})$, where $\tilde{A}=\bigoplus\limits_{n=0}^{\infty}A_n s^n$ (see also~\cite[lemma 3.3, th.3.3]{Zhe2}). The similar filtration is defined on the space $W\subset k[[u]]((t))$:
$$
W_n = {W \cap {t}^{-nr}k[[u]][[t]]}
$$
 And the sheaf $\cf \simeq \Proj (\tilde{W})$, where $\tilde{W}=\bigoplus\limits_{n=0}^{\infty}W_n s^n$. Note that we have $W_{nd}\simeq H^0(X, \cf (nC'))$ by definition \ref{geomdata}, item 6 and by construction of the map $\chi_1$.

\bigskip

We will also use in our paper  the  following notation. Let $\tilde{A}(i)$ (or $\tilde{W}(i)$) denote the graded ring (module) obtained from $\tilde{A}$ ($\tilde{W}$) by the shift of grading, i.e. the $k$-th homogeneous component is $\tilde{A}(i)_k=\tilde{A}_{k+i}$. Define the family of coherent sheaves on $X$:
$$
\cb_i=\Proj (\tilde{A}(i)), \mbox{\quad} \cf_i=\Proj (\tilde{W}(i)).
$$
Then from \eqref{qwerty} it easily follows that the sheaves $\cb_i/\cb_{i-1}$, $\cf_i/\cf_{i-1}$ are torsion free coherent sheaves on
$C\simeq \Proj (\cb_0/\cb_{-1})$.

\subsection{Commutative rings of operators}
\label{rings}

In this section we recall some definitions from \cite{Zhe2} needed to remind the classification of commutative $k$-algebras in the ring $\hat{D}$. We also recall several properties of commutative subalgebras in $D$ given in loc.cit.

Let's consider commutative $k$-algebras of PDOs $B\subset D=k[[x_1,x_2]][\partial_1,\partial_2]$ that satisfy the following condition (cf. conditions in theorem \ref{techn5.2}):
\begin{multline}
\label{property}
\mbox{$B$ contains the operators $P_1,P_2$ with constant principal symbols such that}\\
\mbox{ the intersection of the characteristic divisors of $P_1,P_2$ is empty. }
\end{multline}

In the work \cite{Zhe2} was shown that such algebras are a part of a wider set of commutative $k$-algebras $B'\subset \hat{D}$, and all algebras from this set can be classified in terms of geometric data from subsection \ref{catgemdat}.

\begin{defin}
Define
\begin{multline}
\hat{D}_1=\{a=\sum_{q\ge 0} a_{q}\partial_1^q\mbox{\quad }|  a_q\in k[[x_1, x_2]] \mbox{ and for any $N\in \dn$ there exists $n\in \dn$ such that }\\
\mbox{$\ord_{M}(a_m)>N$ for any $m\ge n$}\},
\end{multline}
where we define the number $\ord_M(a)$, $a\in k[[x_1, x_2]]$ as
$$
\ord_M(a)=\min\{k | a\in M^k\}, \mbox{\quad where $M=(x_1,x_2)$ is the maximal ideal in $k[[x_1, x_2]]$.}
$$

Define
$$
\hat{D}=\hat{D}_1[\partial_2].
$$
\end{defin}

The commutative subalgebras in $\hat{D}$ which can be classified in terms of modified Schur pairs or geometric data were called in \cite{Zhe2} as $1$-quasi elliptic strongly admissible rings. The exact definitions of such rings are too technical to give them here, because we will not use them in this paper (see definitions 2.18, 3.4, 2.11 in \cite{Zhe2}). In fact, we will only need to know the following.

If we have a ring $B\subset D$ of commuting PDOs satisfying the property \ref{property}, then by \cite[Lemma 2.6]{Zhe2} and by \cite[Prop.2.4]{Zhe2} (cf. also the beginning of section 3.1 in loc. cit.) there is a linear change of variables making this ring $1$-quasi elliptic strongly admissible.
Moreover, as it follows from the proofs of \cite[Lemma 2.6, Prop.2.4]{Zhe2}, almost all linear changes of variables preserve the property of the ring to be $1$-quasi elliptic strongly admissible.
From the construction of geometric data starting from $1$-quasi elliptic strongly admissible ring given in \cite[Sec.3]{Zhe2} it follows that the ring after such linear change of variables corresponds to the data with the same surface, divisor and sheaf, but with other point $P$ and trivializations $\pi ,\phi$.

There is another nice property of $1$-quasi elliptic subrings of PDOs claiming the "purity" of such rings:
\begin{prop}{(\cite[Prop.3.1]{Zhe2})}
\label{purity}
Let $B\subset D\subset \hat{D}$ be a $1$-quasi elliptic ring of commuting partial differential operators. Then any ring $B'\subset \hat{D}$ of commuting operators such that $B'\supset B$ is a ring of partial differential operators, i.e. $B'\subset D$.
\end{prop}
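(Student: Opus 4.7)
The plan is to take any element $Q \in B'$, written as $Q = \sum_{j=0}^{N} Q_j \partial_2^j$ with $Q_j \in \hat{D}_1$ (this expansion is finite in $\partial_2$ since $\hat{D} = \hat{D}_1[\partial_2]$), and show that each $Q_j$ is a genuine polynomial in $\partial_1$, so that $Q \in D$. Writing $Q_j = \sum_{l \ge 0} a_{jl} \partial_1^l$, the defining growth condition of $\hat{D}_1$ guarantees that $\ord_M(a_{jl}) \to \infty$, and the target statement is that $a_{jl} = 0$ for all sufficiently large $l$.

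The main tool is commutation with the two operators $P_1, P_2 \in B$ supplied by the $1$-quasi elliptic hypothesis. Their constant principal symbols $\sigma_{m_i}(P_i)$ have empty common zero set on $\dpp^1_k$, so by Hilbert's Nullstellensatz the Jacobian $\det(\partial \sigma_{m_i}(P_i)/\partial \xi_j)$ is a nonzero element of $k[\xi_1,\xi_2]$. As in the proof of Theorem~\ref{techn5.2}, this makes the matrix $(\partial \sigma_{m_i}(P_i)/\partial \xi_j)$ invertible over $k[[x_1,x_2]]((\xi_1^{-1}))((\xi_2^{-1}))$ as well as over each truncation $T_N((\xi_1^{-1}))((\xi_2^{-1}))$ with $T_N = k[[x_1,x_2]]/(x_1,x_2)^N$. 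Applying this to the identities $[P_i,Q] = 0$, projected modulo $(x_1,x_2)^N$ and organized by the order filtration of $\hat{D}$, produces Poisson-type equations $\sum_v (\partial \sigma_{m_i}(P_i)/\partial \xi_v)\,\partial_v(\sigma(Q)) \equiv 0$. The invertibility then forces $\partial_v(\sigma(Q)) \equiv 0 \pmod{(x_1,x_2)^N}$ for every $v$, exactly the mechanism used inside the proof of Theorem~\ref{techn5.2} to pin down symbols.

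The hard part will be converting this mod-$M^N$ control into literal termination of the $\partial_1$-series for each $Q_j$. Because $\hat{D}_1$ permits infinitely many nonzero $a_{jl}$ provided their $M$-adic orders tend to infinity, no single truncation rules out the tail; one must combine the Jacobian invertibility over rings containing $\xi_1^{-1}$ and $\xi_2^{-1}$ with a double induction on the $\Ord$-filtration of $\hat{D}$ and on the truncation level $N$, so that tail coefficients $a_{jl}$ for $l$ beyond a uniform threshold are driven into $\bigcap_N M^N = 0$. Once this is carried out, each $Q_j$ is polynomial in $\partial_1$, hence $Q \in D$, which gives $B' \subset D$. As an alternative, one can attempt to bypass this direct estimate by invoking the classification of $1$-quasi elliptic strongly admissible subalgebras of $\hat{D}$ by modified Schur pairs from \cite{Zhe2}: a superset $B' \supset B$ corresponds to an enlargement of the pair $(A,W)$ inside $k[[u]]((t))$, and the $1$-quasi elliptic character of $B$ should be inherited by $B'$, forcing $B'$ to land in the subclass known to consist of honest differential operators.
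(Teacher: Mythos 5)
First, a remark on the comparison you asked for implicitly: the paper does not prove Proposition \ref{purity} at all --- it is quoted verbatim from \cite[Prop.~3.1]{Zhe2}, and the paper deliberately omits even the definition of $1$-quasi ellipticity (``the exact definitions of such rings are too technical to give them here''). So your attempt has to stand on its own, and as it stands it has two genuine gaps. The first is at the level of hypotheses: you replace ``$B$ is $1$-quasi elliptic'' by ``$B$ contains $P_1,P_2$ with constant principal symbols whose characteristic divisors do not meet'', i.e.\ by condition \ref{property} of Section \ref{rings}. These are not the same condition, and the implication recorded in the paper goes the other way: a ring of PDOs satisfying \ref{property} becomes $1$-quasi elliptic strongly admissible only \emph{after} a suitable linear change of variables, while a $1$-quasi elliptic ring is not asserted to contain such a pair $P_1,P_2$. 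So the input your Jacobian argument needs is not what the proposition supplies.

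The second and more serious gap is that the decisive step is not carried out, and the mechanism you propose does not obviously reach the conclusion. The symbol calculus of Theorem \ref{techn5.2} applies to operators of $D$, which have a well-defined principal symbol $\sigma_{\Ord(Q)}(Q)$ that is a polynomial in $\xi_1,\xi_2$; an element $Q\in\hat{D}\setminus D$ has $\Ord(Q)=\infty$, no principal symbol in that sense, and the reduction modulo $(x_1,x_2)^N$ is not compatible with taking commutators (each $\partial_i$ lowers $\ord_M$ of coefficients), so the identity $\{\sigma(P_i),\sigma(Q)\}=0$ on which your argument rests is not available as stated. Even if one sets up a truncated version, what it would deliver is constancy of a truncation-dependent top symbol, not boundedness of the $\partial_1$-degree of $Q$, which is the actual content of purity; you yourself flag the passage from mod-$M^N$ control to termination of the $\partial_1$-series as ``the hard part'' and only assert that a double induction should push the tail into $\bigcap_N M^N=0$. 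That step \emph{is} the proposition, so leaving it as a plan leaves the proof unwritten. The fallback route is also not viable: Theorem \ref{dannye2} classifies $1$-quasi elliptic strongly admissible \emph{finitely generated} subrings of $\hat{D}$ by modified Schur pairs, $B'$ is not assumed finitely generated or strongly admissible, and --- as the introduction states explicitly --- it is not known which pairs $(A,W)$ (equivalently, which geometric data) correspond to genuine rings of partial differential operators; singling out that subclass is essentially the open question behind Problem \ref{problem1}, so it cannot be invoked to force $B'\subset D$.
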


To recall the main theorem in this section we need a little bit more definitions:

\begin{defin}
The commutative $1$-quasi elliptic rings $B_1$, $B_2\subset \hat{D}$ are said to be equivalent if there is an invertible operator $S\in \hat{D}_1$ of the form $S=f+S^-$, where $S^-\in \hat{D}_1\partial_1$, $f\in k[[x_1, x_2]]^*$,  such that $B_1=SB_2S^{-1}$.
\end{defin}

The operators (differential or pseudo-differential) with constant coefficients are identified with elements of the space $k[z_1^{-1}]((z_2))$ via the replacements $z_1^{-1}\leftrightarrow \partial_1$, $z_2^{-1}\leftrightarrow \partial_2$.

The following $k$-linear maps give the connection between subspaces in $k[[u]]((t))$ and in $k[z_1^{-1}]((z_2))$:
$$
\psi : k[[u]]((t)) \rightarrow k[z_1^{-1}]((z_2)) \mbox{\quad } t\mapsto z_2, u\mapsto z_1^{-1}z_2,
$$
\begin{equation}
\label{psi_1}
\psi_1:\psi (k[[u]]((t)))\simeq k[[u]]((t)) \mbox{\quad } z_2\mapsto t, z_1^{-1}\mapsto ut^{-1}.
\end{equation}

\begin{theo}{(\cite[Th.3.4]{Zhe2})}
\label{dannye2}
There is a one to one correspondence between the set of classes of equivalent $1$-quasi elliptic strongly admissible finitely generated rings of operators in $\hat{D}$ and the set $\cm$ of isomorphism classes of geometric data (see definitions \ref{geomdata}, \ref{geomcategory}).
\end{theo}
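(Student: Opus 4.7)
The plan is to establish the correspondence by factoring it through the modified Schur pairs of Section \ref{mapxi}. In one direction, given a geometric datum $(X,C,P,\cf,\pi,\phi)$, the map $\chi_1$ of Section \ref{mapxi} already produces a pair $(A,W) \subset k[[u]]((t))$; conversely, from such a pair one reads off $X = \Proj \tilde{A}$, $C = \Proj \gr A$, $\cf = \Proj \tilde{W}$, together with the point $P$ and the trivializations $\pi, \phi$ from the filtrations on $A, W$ induced by $t^{-n} k[[u]][[t]]$ and from the inclusion \eqref{qwerty}. That these two passages are mutually inverse is largely bookkeeping: the crucial compatibility is item \ref{dat6} of Definition \ref{geomdata}, which was designed precisely so that $W_{nd} \simeq H^0(X, \cf(nC'))$ and likewise for $A$.

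For the bijection between equivalence classes of rings and modified Schur pairs, the forward direction proceeds in the spirit of Theorem \ref{techn5.2}: the order filtration $B_n = B \cap \hat{D}_n$ gives an injection $\gr B \hookrightarrow k[\xi_1, \xi_2]$, and the quasi-elliptic hypothesis singles out a distinguished smooth $k$-point of $C = \Proj \gr B$ corresponding to the vanishing of an appropriate coordinate. The admissibility condition built into $\hat{D}_1$ — the growth requirement $\ord_M(a_m) \to \infty$ — converts, under the identification $\psi_1 \circ \psi$ of \eqref{psi_1}, into the statement that the full formal symbols of elements of $B$ lie in $k[[u]]((t))$ rather than in the larger two-dimensional local field $k((u))((t))$. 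One then defines $A$ as this image of $B$ and $W$ as the corresponding image obtained from the action of $B$ on the module $\hat{D}/(x_1\hat{D} + x_2\hat{D})$.

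The main obstacle, and the heart of the proof, is the reverse direction: producing from an abstract modified Schur pair $(A,W)$ an honest subring $B \subset \hat{D}$. This is a generalized Sato-type construction. Using the splitting of $k[[u]]((t))$ compatible with the filtration $t^{-n}k[[u]][[t]]$, one produces a formal invertible conjugating operator $S = f + S^-$ with $S^- \in \hat{D}_1 \partial_1$ such that conjugation carries the constant-coefficient operators representing $A$ back into concrete elements of $\hat{D}$. The delicate step is verifying that the resulting operators satisfy the growth conditions defining $\hat{D}_1$ and do not merely lie in a larger pseudo-differential completion; here the strong admissibility of $(A,W)$ is exactly the hypothesis that is needed, and Proposition \ref{purity} guarantees that the ring so obtained sits inside $\hat{D}$. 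The ambiguity in the choice of $S$ matches exactly the equivalence relation $B_1 = SB_2 S^{-1}$, so the construction descends to equivalence classes. The proof concludes by checking that the two compositions are the identity on equivalence and isomorphism classes respectively, which reduces again to the normalization $S = f + S^-$ and the matching \ref{dat6} between the section spaces $H^0(X, \cf(nC'))$ and the filtration pieces $W_{nd}$ modulo $(u,t)^{ndr+1}$.
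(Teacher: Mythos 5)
Note first that the paper does not reprove this statement: Theorem \ref{dannye2} is quoted from \cite[Th.~3.4]{Zhe2}, and what the paper supplies is only the constructive sketch in the paragraph that follows it. Measured against that sketch, your overall architecture is the right one --- factor the correspondence through the modified Schur pairs $(A,W)$ of Section \ref{mapxi}, pass between pairs and geometric data via $X=\Proj\tilde{A}$, $\cf=\Proj\tilde{W}$ and condition 6 of Definition \ref{geomdata}, and use a Sato-type dressing operator $S=f+S^-$, with the ambiguity in $S$ absorbed by the equivalence relation $B_1=SB_2S^{-1}$.

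The genuine gap is in your forward direction. The pair $(A,W)$ is \emph{not} obtained by sending elements of $B$ to their ``full formal symbols'' in $k[[u]]((t))$: the total symbol map is not multiplicative (only the principal symbol is), so the image of $B$ under it is not a $k$-subalgebra, and even if it were, such a map would depend on $B$ itself rather than on its equivalence class --- whereas the whole point of the classification is that the Schur pair, hence the geometric datum, is attached to the class $\{SBS^{-1}\}$. In \cite{Zhe2}, and in the paper's summary, the dressing operator enters already in the forward direction: by \cite[Lemma~2.11]{Zhe2} a $1$-quasi elliptic strongly admissible ring admits an invertible zeroth-order operator $S$ with $SBS^{-1}$ consisting of constant-coefficient (pseudo-differential) operators, and one sets $A=\psi_1(SBS^{-1})$ and $W=\psi_1(W_0S^{-1})$ with $W_0=k[z_1^{-1},z_2^{-1}]$, using the identification \eqref{psi_1}; your $W$ likewise must be built from $DS^{-1}\bmod(x_1,x_2)$, not from the action of $B$ on $\hat{D}/(x_1\hat{D}+x_2\hat{D})$ alone. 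Relatedly, your appeal to the order filtration and the embedding $\gr B\hookrightarrow k[\xi_1,\xi_2]$ ``in the spirit of Theorem \ref{techn5.2}'' is not available for a general commutative subring of $\hat{D}$: elements of $\hat{D}$ are infinite series in $\partial_1$ and do not carry a polynomial principal symbol, and the filtration relevant in \cite{Zhe2} (which produces the rank $r$, the point $P$ and the trivializations) is not the total-order filtration used for honest PDOs. So the construction of $S$ in the forward direction --- which you defer entirely to the reverse direction --- is in fact one of the two essential steps, and without it your forward map is ill-defined.
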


The proof of this theorem is constructive: the surface and divisor from theorem  are constructed by the graded ring $\tilde{A}$ which is defined by the ring $A\subset k[[u]]((t))$, the sheaf and trivialisations are defined by the space $W$ (cf. section~\ref{mapxi} and see proof of theorem~3.3 in \cite{Zhe2}).
The ring $A$ and the subspace $W$ are defined by the ring $SBS^{-1}\subset k[z_1^{-1}]((z_2))$ and the space $W_0S^{-1}=(DS^{-1} \mod (x_1, x_2))\subset k[z_1^{-1}]((z_2))$ (where $W_0=k[z_1^{-1}, z_2^{-1}]$) and by change of variables $\psi_1$: $A=\psi_1(SBS^{-1})$, $W=\psi_1(W_0S^{-1})$, where $S$ is an invertible zeroth order pseudo-differential operator defined in \cite[lemma 2.11]{Zhe2}. Conversely, to construct the ring $B$ starting from the geometric data, one need to find the pair $(A,W)$ described in section \ref{mapxi} and then apply the map $\psi$ and the analogue of the Sato theorem \cite[Th.3.1]{Zhe2} to obtain the uniquely defined invertible zeroth order pseudo-differential operator $S$ such that $W_0S^{-1}=\psi (W) $. Then $B=S^{-1}\psi (A)S$.

\subsection{Algebro-geometric properties of geometric data}
\label{sec2.1.1}

Recall that rings of commuting partial differential operators with constant higher symbols give examples of $1$-quasi elliptic strongly admissible rings after appropriate change of variables (see section 3.1 and lemma 2.6 in \cite{Zhe2}).
Below we investigate several properties of surfaces and sheaves from geometric data in definition \ref{geomdata} in general case and in the case when the data comes from a ring of PDOs. Then we compare geometric properties of data obtained from a commutative algebra of PDOs in theorem \ref{techn5.2} and of data obtained from such data in theorem \ref{dannye2}. As a by-product we obtain some necessary geometric conditions on those data from definition \ref{geomdata} which describe the commutative algebras of PDOs.

First, in theorem~\ref{CM} and corollary~\ref{freemod} we investigate the Cohen-Macaulay properties of a surface $X$ and of a sheaf $\cf$ from geometric data introduced in definition~\ref{geomdata}. One of the application of these properties (see corollary~\ref{freemod}) is that the localization $\cf_P$  is a free $\co_P$-module for a coherent sheaf $\cf$ on $X$. (We note that we don't demand in definition~\ref{geomdata} the property that $\cf_P$ is a free  $\co_P$-module.) Another application of the Cohen-Macaulay property of a surface $X$ along a curve $C$ (see theorem~\ref{CM})
will be given later in section~\ref{ribbons}   in connection with the theory of ribbons (see~\cite{Ku,Ku1}). In particular, due to theorem~\ref{CM}, ribbons constructed by a geometric data and by a geometric data with the Cohen-Macaulaysation of the initial surface and of the initial sheaf are the same. It is an important property, because in this case, by analogy with the reconstruction theorems from~\cite{Os} and~\cite{Pa}, we will have proposition~\ref{reconstruction}  which allows us to reconstruct the Cohen-Macaulay surface $X$ (or the Cohen-Macaulaysation of the initial surface) from its ribbon.
(In papers~\cite{Os} and~\cite{Pa} the Cohen-Macaulay property of a surface from geometric data was important to reconstruct the  surface from its image in $k((u))((t))$.)

\begin{theo}
\label{CM}
Let $X$, $C$ be a surface and a divisor from geometric data in definition~\ref{geomdata}.
Then $X$ is Cohen-Macaulay outside a finite set of points
disjunct from $C$.
\end{theo}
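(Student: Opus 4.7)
The plan is to prove the two halves of the statement separately: first that the non-Cohen--Macaulay locus $Z\subseteq X$ is automatically finite (a general fact about integral surfaces), and then that $Z$ misses the divisor $C$ (the substantive part).

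For the finiteness, I would observe that on any integral Noetherian scheme of Krull dimension $2$, the non-CM locus is contained in the set of closed points. Indeed, at a non-closed point $y$ the local ring $\co_{X,y}$ is an integral domain of Krull dimension at most $1$, so $\dpth(\co_{X,y})=\dim \co_{X,y}$ and $\co_{X,y}$ is automatically Cohen--Macaulay. Thus $Z$ is a closed subset of $X$ consisting only of closed points; as a $0$-dimensional closed subset of a Noetherian scheme it is finite, independently of any disjointness from $C$.

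For the disjointness, I need to show that $\co_{X,x}$ is Cohen--Macaulay for every $x\in C$. At $P$ this is immediate from the hypothesis that $X$ is regular at $P$. For $x\in C\setminus\{P\}$, I would use that $C$ is $\dq$-Cartier to pick $d\ge 1$ such that $C':=dC$ is a Cartier divisor, and let $f\in\co_{X,x}$ be a local generator of the invertible ideal $\co_X(-C')_x$. Since $\co_{X,x}$ is an integral domain, $f$ is a nonzerodivisor and
\[
\dpth\,\co_{X,x}\;=\;\dpth(\co_{X,x}/(f))+1\mbox{,}
\]
so Cohen--Macaulayness of $\co_{X,x}$ is equivalent to $\co_{C',x}=\co_{X,x}/(f)$ having positive depth, i.e.\ to the Cartier divisor $C'$ carrying no embedded component at $x$. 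To rule this out I would pass to the projective presentation $X=\Proj\tilde A$ of Section~\ref{mapxi}, under which $C'$ is cut out globally by the homogeneous element $s^d\in\tilde A$. Because $\tilde A\subset A[s]$, the element $s$ is a nonzerodivisor of $\tilde A$, so for $0\le i<d$ multiplication by $s^i$ gives an isomorphism of $\tilde A$-modules $\tilde A/(s)\;\xrightarrow{\ \sim\ }\;s^i\tilde A/s^{i+1}\tilde A$. Hence $\tilde A/(s^d)$ admits a filtration whose successive quotients are all isomorphic to $\gr A:=\tilde A/(s)$, and the classical bound for associated primes under filtrations yields
\[
\mathrm{Ass}(\tilde A/(s^d))\;\subseteq\;\mathrm{Ass}(\gr A)\mbox{.}
\]
After localizing at the homogeneous prime $\mathfrak p\subset\tilde A$ corresponding to $x$ and passing to degree-zero parts, this translates to $\mathrm{Ass}(\co_{C',x})\subseteq\mathrm{Ass}(\co_{C,x})$. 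But $C$ is reduced irreducible of dimension $1$, so $\co_{C,x}$ is a $1$-dimensional reduced Noetherian local ring, which automatically has $\dpth\ge 1$ and no embedded primes. Hence the same holds for $\co_{C',x}$, whence $\dpth\,\co_{X,x}\ge 2$ and $\co_{X,x}$ is Cohen--Macaulay.

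The main obstacle is the passage from the graded-algebra statement $\mathrm{Ass}(\tilde A/(s^d))\subseteq\mathrm{Ass}(\gr A)$ to the local statement at $x$; this is routine but demands care, relying on the standard correspondence between the points of $\Proj\tilde A=X$ and the relevant homogeneous primes of $\tilde A$, together with the compatibility of associated primes with localization. Geometrically, the argument is that the Rees-type presentation $\tilde A$ exhibits $C\subset X$ as a principal subscheme defined by the nonzerodivisor $s$, so the formal thickenings of $C$ inside $X$ inherit Cohen--Macaulayness from the reduced curve $C$ itself.
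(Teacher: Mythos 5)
Your proposal is correct in substance, but it reaches the crucial point by a genuinely different route than the paper. For Cohen--Macaulayness along $C$, the paper works chart by chart on $X=\Proj \tilde{A}$ and shows by an explicit computation with the valuation $\nu_{t'}$ that the local equation $s^d/x_i$ of $C'=dC$ generates an $I_{(x_i)}$-primary ideal of $\tilde{A}_{(x_i)}$; depth $2$ then follows exactly as in your depth formula. You instead filter $\tilde{A}/(s^d)$ by the images of the ideals $(s^i)$, identify each graded piece with $\tilde{A}/(s)$ (valid, since $s$ is a nonzerodivisor in $\tilde{A}\subset A[s]$), and use $\mathrm{Ass}(M)\subseteq \mathrm{Ass}(N)\cup\mathrm{Ass}(M/N)$ to reduce everything to the absence of embedded primes in $\co_{C,x}$. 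This is shorter and more structural, but notice where the paper's valuation computation is hiding in your argument: you need to know that the homogeneous ideal $(s)$ cuts out $C$ with its \emph{reduced} structure, i.e.\ that $\gr A=\tilde{A}/(s)$ (or at least its homogeneous localizations at points of $C$) has no nilpotents or embedded primes; this is not literally part of definition~\ref{geomdata}. It does hold, because the leading-term map with respect to $\nu_{t'}$ is multiplicative, so $\gr A$ is a domain, and it is what is recorded in section~\ref{mapxi} ($C\simeq \Proj(\cb_0/\cb_{-1})$ with $\cb_0/\cb_{-1}$ torsion free on $C$) and what the paper's chart computation verifies by hand -- so you should cite or prove it, after which your reduction is complete (the routine graded-to-local translation you flag is indeed standard). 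For the finiteness part you implicitly invoke openness of the Cohen--Macaulay locus to get that the non-CM set of closed points is closed; this is legitimate since $X$ is of finite type over $k$, but it should be cited, whereas the paper avoids it by an elementary argument: outside a normal neighbourhood of $P$ it localizes $A$ so that a principal ideal cutting each one-dimensional component has no embedded components, giving Cohen--Macaulayness at all but finitely many points. Both routes work; yours buys brevity and conceptual clarity, the paper's is self-contained within the valuation formalism it has already set up.
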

\begin{proof} If we have a geometric data from definition \ref{geomdata}, we can define a ring $A\subset k[[u']]((t'))$ (see section~\ref{mapxi} above or \cite[def.3.15]{Zhe2}, \cite[th.3.3]{Zhe2}), a filtration $A_i$ defined by the discrete valuation $\nu_{t'}$ on the field $k((u'))((t'))$: $$A_i=\{a\in A \mid \nu_{t'}(a)\ge -i\} \mbox{,} \quad i \ge 0$$
which satisfy the following property: $A_{di}\simeq H^0(X,\co_X(idC))$ for all $i\ge 0$, where $d$ is a minimal natural number such that $dC$ is a very ample Cartier divisor. So, $X\simeq \Proj \tilde{A}^{(d)}\simeq \Proj \tilde{A}$, where
$\tilde{A}=\bigoplus\limits_{i=0}^{\infty}A_{i}s^i$, and $C$ is defined  by the homogeneous ideal $I=\tilde{A}(-1) =(s)$ in the ring $\tilde{A}$.
We note that
the ring $\tilde{A}$ is finitely generated over $k$, since the ring $\tilde{A}^{(d)} = \bigoplus\limits_{i=0}^{\infty} A_{di}s^{di}$ is finitely generated over $k$ as a graded ring which is equivalent to the homogeneous coordinate ring of the projective surface $X$, and the modules $\tilde{A}^{(d,l)} = \bigoplus\limits_{i=0}^{\infty} A_{di+l}s^{di+l}$, $0<l<d$ are naturally isomorphic to the ideals in $\tilde{A}^{(d)}$, which are finitely generated.  The curve $C$ is covered by all affine subsets $\Spec \tilde{A}_{(x_i)}$, where $x_i\in A_d$, $\nu_{t'}(x_i)=-d$.
 (The ring $\tilde{A}_{(x_i)}$ is the subring of homogeneous elements in the localization ring of $\tilde{A}$ with respect to the multiplicatively closed system $\{ x_i^m\}_{m \in \mathbb{Z}}$.)

First let's show that each point on the curve $C$ is Cohen-Macaulay on $X$.  It is enough to show that the ideal $(s^d/x_i)$ in the ring $\tilde{A}_{(x_i)}$ is $I_{(x_i)}$-primary for all $x_i$ with the above properties. (The element $s^d/x_i$ is non-zero divisor in the ring
$\tilde{A}_{(x_i)}$.
Therefore to prove the Cohen-Macaulay property it is enough to find a non-invertible non-zero divisor in the ring $\tilde{A}_{(x_i)}/ (s^d/x_i)$. But all zero-divisors in the last ring coincide with $I_{x_i}/ (s^d/x_i)$ if $(s^d/x_i)$ is a $ I_{(x_i)}$-primary ideal, and all these zero-divisors are nilpotent. Then by  \cite[prop.~4.7]{AM} and by Krull's theorem $\heit I_{(x_i)}=1$. Thus, $\dim \tilde{A}_{(x_i)}/I_{(x_i)}=1$ and there exist non-invertible non-zero divisors). Assume that elements $as^{dk}/x_i^k$ and $ bs^{dl}/x_i^l $ are from $\tilde{A}_{(x_i)}$, but not from the ideal $ (s^d/x_i)$, and
$$\frac{as^{dk}}{x_i^k} \cdot \frac{bs^{dl}}{x_i^l}=\frac{cs^{(k+l-1)d}}{x_i^{k+l-1}} \cdot \frac{s^d}{x_i} \in \left (\frac{s^d}{x_i}\right) \subset I_{(x_i)} \mbox{.}$$
We must show that $as^{dk}/x_i^k, bs^{dl}/x_i^l\in I_{(x_i)}$. Since $I_{(x_i)}$ is a prime ideal, without loss of generality we can assume that  the element $g=as^{dk}/x_i^k \in I_{(x_i)}$. Note that any element $y\in I_{(x_i)}$ satisfies the property $\nu_{t'}(y)>0$.
Then we have $\nu_{t'}(a)=-kd+j$, where $0<j<d$, because $g \in I_{(x_i)}$ and $g \notin (s^d/x_i)$ (if $j\ge d$, then $\nu_{t'}(a)\le (k-1)d$ and therefore $a\in A_{(k-1)d}\subset A_{kd}$, thus $as^{dk}=(as^{d(k-1)})s^d$ and $as^{dk}/x_i^k\in (s^d/x_i)$, a contradiction). Then  we have
 $$g^d=\frac{a^ds^{kd^2-jd}}{x_i^{kd-j}} \cdot \frac{s^{dj}}{x_i^j} \in \left (\frac{s^d}{x_i}\right)\mbox{,}$$
 and $a^ds^{kd^2-jd}/x_i^{kd-j}\notin I_{(x_i)} \mbox{,}$ because $\nu_{t'}(a^d/x_i^{kd-j})=0$.

If $g_1=bs^{dl}/x_i^l \notin I_{(x_i)}$,  then we obtain
$$
g_1^d\frac{a^ds^{kd^2-jd}}{x_i^{kd-j}}\notin I_{(x_i)}.
$$
But on the other hand,
$$ \frac{a^ds^{kd^2-jd}}{x_i^{kd-j}}g_1^d=g^d \frac{x_i^l}{s^{dj}}g_1^d=
\frac{c^ds^{(k+l-1)d^2}}{x_i^{(k+l-1)d}} \cdot \frac{s^{d^2-dj}}{x_i^{d-j}}\in I_{(x_i)} \mbox{,}$$
a contradiction. Thus, $g_1\in I_{(x_i)}$, and therefore $(s^d/x_i)$ is a $I_{(x_i)}$-primary ideal.

Now let $V$ denote an open subscheme in $X$, such that $P\in V$ ($P$ is a smooth point from definition \ref{geomdata}) and $V$ is normal (hence, Cohen-Macaulay). Then $X\backslash V$ is a closed subscheme with each irreducible component of dimension not greater than one. Let $E$ be an irreducible component of dimension one. Let $e$ denote the prime ideal of $E$ in the ring $A$ (the generic point of $E$ belongs to the affine set $X\backslash C=\Spec A$).  Take an element $a\in e$. Making an appropriate localization by a multiplicatively closed subset $S\subset A$, using \cite[prop. 4.9.]{AM}, we come to a ring $A_S$, where the primary decomposition of $(a)_S$ does not contain associated embedded ideals. So, all points on $\Spec A_S\cap E$ are Cohen-Macaulay. Therefore, there can be only finite number of not Cohen-Macaulay points on $X$. Since all points on $C$ are Cohen-Macaulay, we can find an open $U\supset C$ such that $U$ is a Cohen-Macaulay scheme.
\end{proof}

\begin{corol}
\label{freemod}
Let $\cf $ be a sheaf from geometric data  in definition~\ref{geomdata}. If $\cf$ is a coherent sheaf, then it is Cohen-Macaulay along the curve $C$. In particular, $\cf_P$ is a free $\co_P$-module.
\end{corol}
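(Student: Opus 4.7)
The plan is to prove the Cohen-Macaulay property of $\cf$ along $C$ by a module-theoretic analogue of the argument in the proof of Theorem~\ref{CM}, and then to specialize at the smooth point $P$ to deduce freeness via Auslander--Buchsbaum. Since by Theorem~\ref{CM} the local ring $\co_{X,Q}$ is Cohen-Macaulay of dimension~$2$ at every $Q\in C$, it suffices to exhibit a regular sequence of length two on $\cf_Q$.

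I would cover $C$ by the affine opens $D_+(x_i)=\Spec\tilde{A}_{(x_i)}$ with $x_i\in A_d$, $\nu_{t'}(x_i)=-d$, exactly as in the proof of Theorem~\ref{CM}, so that $C$ is cut out on $D_+(x_i)$ by the principal ideal $(s^d/x_i)$ and $\cf|_{D_+(x_i)}$ corresponds, via $\cf\simeq\Proj\tilde{W}$, to the finitely generated $\tilde{A}_{(x_i)}$-module $\tilde{W}_{(x_i)}$. Torsion-freeness of $\cf$ on the integral scheme $X$ supplies the first regular element: $\tilde{W}_{(x_i)}$ is torsion-free over the domain $\tilde{A}_{(x_i)}$, so $s^d/x_i$ is a non-zero-divisor on it, yielding $\dpth_{\co_{X,Q}}\cf_Q\ge 1$. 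For the second element I would exploit the observation recorded at the end of section~\ref{mapxi}, namely that the associated-graded sheaves $\cf_i/\cf_{i-1}$ are torsion-free on $C$. Up to invertible degree-zero factors, the quotient $\tilde{W}_{(x_i)}/(s^d/x_i)\tilde{W}_{(x_i)}$ admits a finite filtration whose successive subquotients are the localizations at $x_i$ of the graded pieces $W_n/W_{n-1}$ ($n$ running in a range of length $d$), and these are torsion-free sheaves on $C\cap D_+(x_i)$ by the cited fact. Consequently no closed point of $C\cap D_+(x_i)$ is an associated prime of $\tilde{W}_{(x_i)}/(s^d/x_i)\tilde{W}_{(x_i)}$, and one can pick an element of $\mathfrak{m}_Q$ that is a non-zero-divisor modulo $s^d/x_i$. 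The standard depth lemma together with $\dpth\le\dim=2$ then force $\dpth_{\co_{X,Q}}\cf_Q=2$, so $\cf$ is Cohen-Macaulay along $C$.

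For the ``in particular'' clause, $\co_P$ is a regular local ring of dimension~$2$ because $P$ is regular on $X$, and $\cf_P$ is a finitely generated Cohen-Macaulay module of depth~$2$ by the above. The Auslander--Buchsbaum formula $\mathrm{pd}_{\co_P}\cf_P+\dpth\cf_P=\dpth\co_P=2$ then forces $\mathrm{pd}_{\co_P}\cf_P=0$, so $\cf_P$ is free over~$\co_P$.

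The main technical point, I expect, will be the identification of the successive subquotients of the claimed filtration of $\tilde{W}_{(x_i)}/(s^d/x_i)\tilde{W}_{(x_i)}$ with (localizations of) the graded pieces $W_n/W_{n-1}$: a careful bookkeeping of the grading under the $\Proj$ construction, since although $s^d/x_i$ has degree zero it is obtained by dividing a degree-$d$ element by a degree-$d$ element and therefore interacts non-trivially with the grading-shift relating $\tilde{W}$ to its associated graded. No new geometric input beyond what is already supplied in section~\ref{mapxi} should be required.
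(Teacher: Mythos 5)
Your proof is correct, and its overall frame coincides with the paper's: cover $C$ by the charts $D_+(x_i)$, identify $\cf$ with $\Proj(\tilde{W})$, take $s^d/x_i$ as the first regular element (torsion-freeness of $\tilde{W}_{(x_i)}$ over the domain $\tilde{A}_{(x_i)}$), and conclude freeness at the regular point $P$ from depth two (the paper quotes the corresponding exercise in Matsumura, which is the Auslander--Buchsbaum statement you use). Where you genuinely diverge is in producing the second regular element. The paper reuses the explicit element $bs^{dl}/x_i^l$ with $\nu_{t'}(b)=-dl$ from the proof of Theorem~\ref{CM} and checks by a direct valuation computation that it remains regular on $\tilde{W}_{(x_i)}/(s^d/x_i)\tilde{W}_{(x_i)}$: an element $as^{dk}/x_i^k$ lies outside $(s^d/x_i)\tilde{W}_{(x_i)}$ exactly when $\nu_t(a)=-dkr+j$ with $0\le j<dr$, and multiplication by $b$ preserves this normal form. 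You instead name no element: you filter the quotient by the images of $s^j\tilde{W}$, identify the $d$ subquotients with restrictions of the sheaves $\cf_{-j}/\cf_{-j-1}$ ($0\le j\le d-1$), invoke their torsion-freeness on the integral curve $C$ (end of section~\ref{mapxi}) to rule out associated primes at closed points, and obtain a second regular element in $\mathfrak{m}_Q$ by prime avoidance; the grading bookkeeping you flag does work out, since $(s^j\tilde{W}/s^{j+1}\tilde{W})_{(x_i)}$ is built from the pieces $W_{md-j}/W_{md-j-1}$, i.e.\ exactly $\cf_{-j}/\cf_{-j-1}|_{D_+(x_i)\cap C}$. Your route is more conceptual and shows the depth bound needs only the torsion-freeness of the graded pieces, so it transfers to any situation where such a filtration is available; the paper's route is more elementary and self-contained in that it proves the slightly stronger fact that the very same two-element sequence shown regular on $\co_{X,Q}$ in Theorem~\ref{CM} is regular on $\cf_Q$, and in doing so it essentially reproves, rather than quotes, the valuation fact underlying the assertion at the end of section~\ref{mapxi} on which your argument leans.
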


 \begin{proof} Since the sheaf $\cf$ is torsion free, then $\dim \cf_Q = \dim (\co_Q/ {\rm Ann} \, (\cf_Q)) =2$ for any point $Q\in C$. So, we have to show that $\dpth \cf_Q=2$.

 In the proof of theorem \ref{CM} we have shown that for any $Q\in C$ there is a regular sequence in $\co_{X,Q}$ coming from the sequence $s^d/x_i, bs^{dl}/x_i^l$ from $\tilde{A}_{(x_i)}$ for some $i$, where $bs^{dl}/x_i^l\notin I_{(x_i)}$ or, equivalently, $\nu_{t'}(b)=-dl$. Let's show that this sequence is regular also for $\cf_Q$.

As we have already remind in section~\ref{mapxi}, $\cf\simeq \Proj (\tilde{W})$. So, it is enough to prove that the element $bs^{dl}/x_i^l$ is not a zero divisor in the module $\tilde{W}_{(x_i)}/(s^d/x_i)\tilde{W}_{(x_i)}$. Let $g=as^{dk}/x_i^k\in \tilde{W}_{(x_i)}$ be an element such that $g\notin (s^d/x_i)\tilde{W}_{(x_i)}$. Note that this is equivalent to the condition $\nu_t(a)=-dkr+j$, where $0\le j<dr$ (see analogous arguments in the proof of theorem). But then $\nu_t(ab)=-d(k+l)r+j$, whence
by the same reason
$$
\frac{as^{dk}}{x_i^k} \cdot \frac{bs^{dl}}{x_i^l}\notin \left (\frac{s^d}{x_i}\right ) \tilde{W}_{(x_i)}.
$$
Thus,  $bs^{dl}/x_i^l$ is not a zero divisor and $\dpth \cf_Q=2$ for any $Q\in C$.

The last assertion follows from \cite[ch.~6, \S~16, exer.~4]{Mat}, because $P$ is a regular point.
\end{proof}

In view of theorem~\ref{techn5.2} it is important to compare the sheaf $\cl$ there (which is an analogue of the Baker-Akhieser module) with the sheaf $\cf$ appearing in the definition \ref{geomdata}. The following proposition gives a criterion answering the question when the sheaf $\cf$ from geometric data of rank $r$ is a coherent sheaf of rank $r$ on $X$.

\begin{prop}
\label{criterion}
Let $(X,C,P,\cf ,\pi , \phi )$ be a geometric data of rank $r$ from definition \ref{geomdata}.
The sheaf $\cf$ is coherent of rank $r$  on $X$ if and only if the self-intersection index $(C^2)=r$ on $X$.
\end{prop}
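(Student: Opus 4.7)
The plan is to compare two formulas for the asymptotic dimension of $H^0(X,\cf(nC'))$: one coming from asymptotic Riemann--Roch~\eqref{rrf} applied to coherent sheaves, and one coming from the exact Hilbert-function identity in item~\ref{dat6} of Definition~\ref{geomdata}.

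For the direction $\Rightarrow$, assume $\cf$ is coherent of rank $r$. Since $C'=dC$ is ample Cartier, Serre vanishing gives $h^0(X,\cf(nC'))=\chi(X,\cf(nC'))$ for $n\gg 0$, and formula~\eqref{rrf} then yields
$$h^0(X,\cf(nC'))=\frac{r\,d^2(C^2)}{2}\,n^2+O(n).$$
On the other hand, item~\ref{dat6} of Definition~\ref{geomdata} produces, for every $n\ge 0$, the exact equality
$$h^0(X,\cf(nC'))=\dim_k k[[u,t]]/(u,t)^{ndr+1}=\binom{ndr+2}{2}=\frac{d^2 r^2}{2}\,n^2+O(n).$$
Comparing the coefficients of $n^2$ gives $r\,d^2(C^2)=d^2 r^2$, hence $(C^2)=r$.

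For the converse, assume $(C^2)=r$. The polynomial growth of $\dim_k H^0(X,\cf(nC'))$ together with torsion-freeness of $\cf$ forces the generic rank $\rho=\dim_{k(X)}\cf_\eta$ to be finite; writing $\cf$ as a filtered colimit of its coherent subsheaves, one obtains a coherent subsheaf $\cf'\subseteq \cf$ realising this generic rank. Applying Riemann--Roch to $\cf'$ and using Serre vanishing yields $h^0(X,\cf'(nC'))=\frac{\rho\,d^2 r}{2}n^2+O(n)$ for $n\gg 0$. Setting $\cg=\cf/\cf'$ and using the vanishing $H^1(X,\cf'(nC'))=0$ for $n\gg 0$, the long cohomology exact sequence produces the upper bound $h^0(X,\cg(nC'))\le \frac{d^2 r(r-\rho)}{2}n^2+O(n)$.

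The main obstacle is then to deduce $\rho=r$ and $\cg=0$ from these estimates. My plan is to exploit the embedding $\phi:\cf_P\hookrightarrow k[[u,t]]$ from item~\ref{dat6} of Definition~\ref{geomdata}: the compatibility of $\phi$ with the maps $\gamma_n$ and $\tau_n$ identifies the image of $H^0(X,\cf'(nC'))$ inside $k[[u,t]]/(u,t)^{ndr+1}$ as a subspace whose codimension is governed by $r-\rho$, so that an infinitesimal analysis at $P$ pins down $\rho=r$. Combining this with the affine structure $X\setminus C=\Spec A$ (cf.~section~\ref{mapxi}) and the fact that then $\cg$ is a torsion quasi-coherent sheaf on an affine surface whose twisted global sections grow only linearly, the quadratic bound above with leading term $0$ will force $\cg=0$, and hence $\cf=\cf'$ is coherent of rank $r$.
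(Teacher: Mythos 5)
Your forward direction is fine and coincides with the paper's: Serre vanishing plus the asymptotic Riemann--Roch formula~\eqref{rrf} compared with the exact count $\dim_k H^0(X,\cf(nC'))=\binom{ndr+2}{2}$ coming from item~6 of definition~\ref{geomdata}. The converse, however, has a genuine gap, and it sits exactly where you flag "the main obstacle". First, the claim $\rho=r$ is only asserted ("an infinitesimal analysis at $P$ pins down $\rho=r$"); the paper proves the inequality $\rk\cf'\ge r$ concretely, by exhibiting sections $w_1,\dots,w_r\in W_n$ with $\nu_t(w_i)=-i$, which are linearly independent over $\tilde{A}$ because elements of $A$ sit in $k[[u']]((t'))$, so one gets $\tilde{A}^{\oplus r}\hookrightarrow\tilde{W}'$ and hence $\co_X^{\oplus r}\hookrightarrow\cf'$ (and likewise for $\cf'|_C$). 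Second, and more seriously, your concluding step fails: from $\rho=r$ you only get $h^0(X,\cg(nC'))=O(n)$, and linear growth of twisted sections is exactly what a nonzero torsion quotient supported on a one-dimensional closed subset (for instance along $C$) would produce. So the comparison of leading quadratic coefficients cannot force $\cg=0$; it cannot even show that the increasing family of coherent subsheaves generated by the $W_{qd}$ stabilizes. (The parenthetical appeal to $X\setminus C=\Spec A$ does not help either: on the affine part the twist is irrelevant and sections are not finite-dimensional, while the possible discrepancy may be concentrated along $C$ itself.)

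This is precisely why the paper's proof of the converse descends to the curve $C$ and tracks terms one order below the leading one. It fixes a coherent subsheaf $\cf'_1$, proves a uniform vanishing $H^1(C,\cf'_2|_C(nC'))=0$ for $n\ge n_0(\cf'_1)$ valid for all coherent $\cf'_2$ with $\cf'_1\subset\cf'_2\subset\cf$, proves the auxiliary lemma $H^0(X,\Proj(\tilde{W}'(nd)))=W'_{nd}$, and then compares the constant term $c(\cf'_2)$ of $\chi(C,\cf'_2|_C(nC'))=nr^2d^2+c(\cf'_2)$ with the constant $l=\dim_k(W_{nd}/W_{(n-1)d})-nr^2d^2$: the inclusion of $W_{qd}/W_{(q-1)d}$ into $H^0(C,\cf'_2|_C(qC'))$ gives $c(\cf'_2)\ge l$, while $H^0(C,\cf'_2|_C(nC'))=W'_{nd}/W'_{(n-1)d}\subset W_{nd}/W_{(n-1)d}$ for $n\gg0$ gives $c(\cf'_2)\le l$. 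Equality then forces $W'_{nd}/W'_{(n-1)d}=W_{nd}/W_{(n-1)d}$ for all $n$, i.e.\ $\tilde{W}'=\tilde{W}$ and $\cf=\cf'_2$ is coherent of rank $r$. Note also that the exact (not merely asymptotic) equality $\dim_k W_{nd}=\binom{ndr+2}{2}$ for every $n$ is essential to this bookkeeping. To repair your argument you would need an analogous sub-leading analysis; the leading-term estimate alone cannot close the proof.
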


 \begin{proof} As we have already remind in the proof of theorem \ref{CM} (cf. \cite{Zhe2} and section~\ref{mapxi}), there are subspaces $A,W$ such that $X\simeq \Proj (\tilde{A})$, $\cf \simeq \Proj (\tilde{W})$. So, if the sheaf $\cf$ is coherent of rank $r$, then we have by \cite[ch.II, ex. 5.9]{Ha} $H^0(X, \cf (nC'))\simeq W_{nd}\simeq k[u,t]/(u,t)^{ndr+1}$ for $n\gg 0$. So, as in the proof of theorem \ref{techn5.2}, item 4 we obtain
$$
\chi (X, \cf (nC'))= \frac{(ndr+1) \cdot (ndr+2)}{2} \quad \mbox{for} \quad n \gg 0
$$
and from formula \eqref{rrf} it follows $C^2=r$.

Conversely, let the self-intersection index $(C^2)=r$ on $X$. Let $C'=dC$ be a very ample Cartier divisor. Then for any coherent sheaf $\cf'$ the coefficient at the degree $n^2$ of the polynomial $\chi (X, \cf'(nC'))$ is equal to $d^2(\rk \cf')r/2$ (see formula~\eqref{rrf}). Consider the sheaf $\cf' =\Proj (\tilde{W}')$, where $\tilde{W}'$ is a graded $\tilde{A}$-submodule in $\tilde{W}$ generated by elements from $W_{n}$ for sufficiently big $n$. Note that $\rk \cf' \ge r$. Indeed, there are elements $w_1,\ldots ,w_r$ in $W_{n}$ with $\nu_t(w_1)=-1, \ldots, \nu_t(w_r)=-r$ (because for $n=md$, $m\gg 0$, by definition~\ref{geomdata} and section~\ref{mapxi} we have $W_{md}\simeq H^0(X,\cf (mC'))\simeq k[[u,t]]/(u,t)^{mdr+1}$ and $W_{md}=f^{-md} H^0(X,\cf (mC'))$, where the space is considered as a subspace in $k[[u,t]]$ through the embedding from definition~\ref{geomdata}, item 6)
and therefore they are linearly independent over $\tilde{A}$. So, there is an embedding $\tilde{A}^{\oplus r}\hookrightarrow \tilde{W}'$ and since $\Proj$ is an exact functor (see \cite[prop.~2.5.4]{EGAII}), we obtain an embedding $\co_X^{\oplus r} \hookrightarrow \Proj (\tilde{W}')$, hence $\rk \cf' \ge r$. The same arguments show that the sheaf $\cf'|_C=\Proj (\gr \tilde{W}')$ on $C$ has rank greater or equal to $r$.

On the other hand, for big $n$ we have
$$\chi (X, \cf'(nC'))=\dim_k {W}'_{nd}\le \dim_k k[u,t]/(u,t)^{ndr+1} \mbox{,}$$
since ${W}'_{nd}\subset {W}_{nd}\simeq k[u,t]/(u,t)^{ndr+1}$ by section~\ref{mapxi}. So, the coefficient at the degree $n^2$ of the polynomial $\chi (X, \cf'(nC'))$ is less or equal than  $d^2r^2/2$. Hence, $\rk \cf'=r$ and $\rk \cf'|_C=r$. Then we also have $\chi (C, \cf'|_C(nC'))=r^2d^2n+c(\cf')$, where $c(\cf')\in \dz$.

Now consider two such coherent sheaves $\cf'_1\subset \cf'_2\subset \cf$. Then on $C$ we have the exact sequences
$$
0\rightarrow A(nC')\rightarrow \cf'_1|_C(nC')\rightarrow \cf'_2|_C(nC')\rightarrow B(nC')\rightarrow 0
$$
for all $n$, where $A$ and $B$ are coherent sheaves with finite support. Hence, we have
$$H^1(C, \cf'_1 |_C(nC'))=H^1(C, (\cf'_1/A) |_C(nC')) \mbox{,} $$ and for all $n\gg 0$ such that $H^1(C, (\cf'_1/A)|_C(nC'))=0$ we have $H^1(C, \cf'_2|_C(nC'))=0$. Let's fix such $n_0$. Note that this number depends only on $\cf'_1$, not on $\cf'_2$.

So, for all $n\ge n_0$ and for all coherent sheaves $\cf'_2\supset \cf'_1$ we have $H^1(C, \cf'_2|_C(nC'))=0$. Take some $q>n_0$ and consider the sheaf $\cf'_2 =\Proj (\tilde{W}')$, where $\tilde{W}'$ is a graded $\tilde{A}$-submodule in $\tilde{W}$ generated by elements from $W_{qd}$. Then we have
$$\chi (C, \cf'_2|_C(qC'))=\dim H^0(C, \cf'_2|_C(qC'))=qd^2r^2+c(\cf'_2)$$ for some $c(\cf'_2)\in \dz$.

Note that for all $n$ we have $\Proj (\tilde{W}'(nd))\simeq \Proj (\tilde{W'}^{(d)}(n))$ by~\cite[prop.~2.4.7]{EGAII} (recall that $\tilde{W}'(nd)$ is equivalent to $\oplus_{i=0}^{\infty}W'_{i+nd}s^i$), and $\Proj (\tilde{W'}^{(d)}(n))\simeq \Proj (\tilde{W'}^{(d)})(n)\simeq \cf'_2 (nC')$ by~\cite[ch.~II, prop.~5.12]{Ha}. So,
$$H^0(X, \cf'_2 (nC'))= H^0(X, \Proj (\tilde{W}'(nd))) \mbox{.}$$

\begin{lemma}{(cf. \cite[lemma 3.5]{Zhe2})}
We have
$$H^0(X, \Proj (\tilde{W}'(nd)))= {W}'_{nd}.$$
\end{lemma}

\begin{proof} By definition, we have ${W}'_{nd}=\tilde{W'}(nd)_0\subset H^0(X,\Proj (\tilde{W}'(nd)))$.

Let $a\in H^0(X, \Proj (\tilde{W}'(nd)))$, $a\notin {W}'_{nd}$. Then $a=(a_1,\ldots ,a_k)$, where $a_i\in (\tilde{W}'(nd))_{(x_i)}$,  and $x_i\in \tilde{A}_d$ are generators of the space $\tilde{A}_d$ such that $x_1=s^d$, $x_i=x'_is^d$ (where $x'_i\in A_d$) and $a_i=a_j$ in $\tilde{A}_{x_ix_j}$.

We have $a_i=\tilde{a}_i/x_i^{k_i}$ (where $\tilde{a}_i=a'_is^{k_id}$, $a'_i\in W'_{k_id+nd}$), $a_1=\tilde{a}_1/s^{k_1}$ and $k_1>0$ since $a\notin {W}'_{nd}$. Indeed, if $\tilde{a}_1\in \tilde{W}'(nd)_{0}={W}'_{nd}$, then $a=\tilde{a}_1$ since $\tilde{W}'$ is a torsion free $\tilde{A}$-module, a contradiction.
So, we have
$${a}'_1\in {W}'_{k_1+nd}\backslash {W}'_{k_1+nd-1}.
$$

Then for $x'_i\in {A}_d\backslash {A}_{d-1}$ (such a generator $x_i$ exists because all elements from $A_{d-1}\subset A_d$ lie in the ideal that defines the divisor $C$) we have ${x'_i}^{k_i}\in {A}_{dk_i}\backslash {A}_{dk_i-1}$ and therefore ${a}'_1 {x'_i}^{k_i}\in {W'}_{k_1+dk_i+nd}\backslash {W'}_{k_1+dk_i+nd-1}$. On the other hand, we have the equality $\tilde{a}_1 x_i^{k_i}= \tilde{a}_i s^{k_1}$, hence $a'_1{x'_i}^{k_i}=a'_i$, but
$${a}'_i \in {W'}_{dk_i+nd}\subset {W}'_{k_1+dk_i+nd-1},$$
a contradiction. So, $a\in {W}'_{nd}$.
\end{proof}

Now we have that
$$H^0(C, \cf'_2|_C(qC'))\supset H^0(X, \cf'_2(qC'))/H^0(X, \cf'_2((q-1)C')).$$
By lemma and by definition of the sheaf $\cf'_2$ we have
$$
H^0(X, \cf'_2(qC'))/H^0(X, \cf'_2((q-1)C'))={W}'_{qd}/{W}'_{(q-1)d}={W}_{qd}/{W}_{(q-1)d}.$$
So, we obtain $c(\cf'_2)\ge \dim ({W}_{qd}/{W}_{(q-1)d})-qr^2d^2$.

On the other hand, for big $n$ we have
$$
H^0(C, \cf'_2|_C(nC'))= H^0(X, \cf'_2(nC'))/H^0(X, \cf'_2((n-1)C'))={W}'_{nd}/{W}'_{(n-1)d}\subset {W}_{nd}/{W}_{(n-1)d}
$$
and $\dim_k ({W}_{nd}/{W}_{(n-1)d})-nr^2d^2=\mbox{const}=l$ for all $n\ge 0$. Therefore
$$c(\cf'_2)=\dim H^0(C, \cf'_2|_C(nC'))-nr^2d^2\le l \mbox{.}$$
 Hence $c(\cf'_2)=l$, ${W}'_{nd}/{W}'_{(n-1)d}= {W}_{nd}/{W}_{(n-1)d}$ for all $n\ge 0$, and consequently $\tilde{W}'=\tilde{W}$, i.e. $\cf =\cf'_2$ is a coherent sheaf of rank $r$ on $X$.
\end{proof}

\begin{nt}
The sheaf $\cf$ from geometric data of rank $r$ may be not coherent, as the following example shows. Let $W=\langle u^it^{-j}\mbox{\quad} |i,j \ge 0, i-j\le 0\rangle $ and $A=k[ut^{-2},t^{-2}]$ be two subspaces in $k[[u]]((t))$. Then it is easy to see that $W$ is not a finitely generated $A$-module. So, the geometric data constructed by these subspaces (see \cite[th.3.3]{Zhe2}) will contain a quasicoherent, but not coherent sheaf $\cf$.

The coherence of the sheaf $\cl$ constructed by a ring of partial differential operators in theorem \ref{techn5.2} followed from special conditions on the ring (see item 1 of this theorem). These conditions may be not true for a general $1$-quasi elliptic strongly admissible ring (see theorem \ref{dannye2}) even if such a ring is a ring of partial differential operators, as the example above shows: indeed, the ring $A$ above corresponds to the ring $B=k[\partial_2^2, \partial_1\partial_2]$, see \cite[th.3.2]{Zhe2}. Nevertheless, the proposition above guarantees that for a surface and a divisor satisfying certain geometric conditions the sheaf $\cf$ must be coherent.

Now a natural question arises: how are connected the variety $X$, the divisor $C$ and the sheaf $\cl$ from theorem \ref{techn5.2} (if $\dim X=2$), constructed by a ring of partial differential operators from this theorem, and corresponding objects of the geometric data constructed by the same ring in theorem \ref{dannye2}? The proposition below gives the answer.
\end{nt}

\begin{prop}
\label{connection}
Let $B$ be a commutative subring of partial differential operators in two variables which satisfies the conditions from theorem \ref{techn5.2} and from theorem \ref{dannye2} (cf. the beginning of this subsection).
Then the triple $(X,C,\cl )$ from theorem \ref{techn5.2} is isomorphic to the triple $(X,C,\cf )$ (a part of geometric data) from theorem \ref{dannye2}.
\end{prop}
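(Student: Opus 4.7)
The plan is to identify the Rees-type constructions underlying the two theorems. Recall from Theorem~\ref{techn5.2} that $X=\Proj\tilde B$ with $\tilde B=\bigoplus_{n\ge 0}B_ns^n$, $B_n=B\cap D_n$, the divisor $C$ is cut out by the homogeneous ideal $(s)$, and $\cl=\Proj\tilde L$ for $L=D/(x_1D+x_2D)$ filtered by the images of $D_n$. From Theorem~\ref{dannye2}, one has $A=\psi_1(SBS^{-1})\subset k[[u]]((t))$ (where $S$ is the zeroth-order pseudo-differential operator of \cite[Lemma 2.11]{Zhe2}), filtered by $A_n=A\cap t^{-nr}k[[u]][[t]]$; then $X=\Proj\tilde A$, $C$ is again $V(s)$, and $\cf=\Proj\tilde W$ for $W=\psi_1(W_0S^{-1})$ with $W_0=k[z_1^{-1},z_2^{-1}]$.

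The first step is to verify that the map $\rho\colon B\to A$, $\rho(P)=\psi_1(SPS^{-1})$, is a filtered isomorphism with $\rho(B_n)=A\cap t^{-n}k[[u]][[t]]$. Conjugation by the zeroth-order operator $S$ preserves the order of pseudo-differential operators, so $SBS^{-1}$ consists of constant-coefficient operators of the same order; under the identification $\partial_i\leftrightarrow z_i^{-1}$, an order-$n$ element of $k[z_1^{-1}]((z_2))$ has top total degree $n$ in $z_1^{-1},z_2^{-1}$, and the substitution $\psi_1$ (where $z_1^{-1}\mapsto ut^{-1}$, $z_2^{-1}\mapsto t^{-1}$) converts this into a pole of order at most $n$ in $t$. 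Hence $\rho(B_{nr})=A_n$, so $\tilde A$ is isomorphic, as a graded ring, to the re-grading of the Veronese subring $\tilde B^{(r)}$ (with $n$-th piece $B_{nr}s^n$). By \cite[prop.~2.4.7]{EGAII} we obtain $\Proj\tilde A\simeq \Proj\tilde B^{(r)}\simeq\Proj\tilde B$, and the divisor $V(s)$ on either side is carried to $V(s)$ on the other. Thus the pairs $(X,C)$ of the two constructions coincide.

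For the sheaves, recall from the proof of item~4 of Theorem~\ref{techn5.2} the isomorphism $L\simeq k[\xi_1,\xi_2]$ of right $D$-modules, with $\partial_j$ acting as multiplication by $\xi_j$ and $x_i$ as $-\partial/\partial\xi_i$; on the other hand $W_0=k[z_1^{-1},z_2^{-1}]\simeq k[\xi_1,\xi_2]$ via $z_i^{-1}\leftrightarrow\xi_i$. Under the Sato-type construction underlying Theorem~\ref{dannye2}, the right $B$-action on $L$ translates, after conjugation by $S$ and the change of variables $\psi_1$, into the right $A$-action on $W=\psi_1(W_0S^{-1})$ by multiplication in $k[[u]]((t))$, and the same order-to-pole calculation as above yields $L_n\leftrightarrow W\cap t^{-n}k[[u]][[t]]$; in particular $L_{nr}\leftrightarrow W_n$. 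Therefore $\tilde W$ and the re-graded Veronese $\tilde L^{(r)}$ coincide as graded modules over the identified rings $\tilde A\simeq\tilde B^{(r)}$, and applying $\Proj$ gives $\cl\simeq\cf$.

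The main obstacle is a careful bookkeeping of the compatibility of the right $D$-module structure on $L$ with the right $A$-action on $W$ obtained from $W_0S^{-1}$ under conjugation by the pseudo-differential operator $S$, together with the precise matching of the two filtrations. Once this compatibility is spelled out, the isomorphism of triples follows from the functoriality of the $\Proj$ construction applied to the graded ring $\tilde A\simeq\tilde B^{(r)}$ and the graded module $\tilde W\simeq\tilde L^{(r)}$.
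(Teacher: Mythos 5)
Your proposal takes essentially the same route as the paper: the paper likewise uses the conjugation map $\alpha\colon B\to A$, $b\mapsto \psi_1(SbS^{-1})$, to identify $\tilde{B}$ with $\tilde{A}$ (hence $X$ and $C$), and the map $\varphi\colon L\to W$, $l\mapsto \psi_1(lS^{-1}\bmod x_1D+x_2D)$, to identify $\cl$ with $\cf$, the only cosmetic difference being that you insert the Veronese re-grading by the rank $r$ where the paper asserts the graded isomorphism directly. The compatibility you defer as ``bookkeeping'' is exactly the paper's short computation $\varphi(lb)=\psi_1\bigl(lS^{-1}(SbS^{-1})\bmod x_1D+x_2D\bigr)=\varphi(l)\alpha(b)$, which goes through because $SBS^{-1}$ consists of constant-coefficient (pseudo-differential) operators.
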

\begin{proof} Recall that the surface and divisor from theorem \ref{dannye2} are constructed by the graded ring $\tilde{A}$ which is defined by the ring $A\subset k[[u]]((t))$ (cf. section~\ref{mapxi}, see the end of the section \ref{rings}).
 So, there is a natural isomorphism $\alpha :B\rightarrow A$, $b\mapsto \psi_1(SbS^{-1})$ which induces an isomorphism $\tilde{B}\simeq \tilde{A}$, whence we have the isomorphism of surfaces and divisors.

Moreover, note that the map $\varphi :L\rightarrow W$, $l\mapsto \psi_1(lS^{-1} \mod x_1D +x_2D)$ gives an isomorphism between the $B$-module $L$ and $A$-module $W$, because $SBS^{-1}$ is a subring of pseudo-differential operators with constant coefficients, see the proof of theorem~3.2 in \cite{Zhe2}, and
\begin{multline*}
\varphi (lb)= \psi_1(lb S^{-1} \mod x_1D +x_2D) = \psi_1(lS^{-1}(SbS^{-1})\mod x_1D +x_2D) =\\ =\psi_1 (lS^{-1}\mod x_1D +x_2D) \psi_1(SbS^{-1})=\varphi (l)\alpha (b) \mbox{.}
\end{multline*}
So, $\varphi$ induces an isomorphism of sheaves $\cl$ and $\cf$, and this isomorphism is compatible with the isomorphism of surfaces.
\end{proof}

\begin{nt}
\label{primer}
The rank of the sheaf $\cf$ from geometric data in definition \ref{geomdata} may be greater than the rank of the data even if the sheaf $\cf$ is coherent (as it easily follows from the arguments of proposition \ref{criterion}, the rank of $\cf$ can not be less than the rank of the data).

For example, consider the ring of PDO $B=k[\partial_2, \partial_1\partial_2+\partial_1^2]$. It satisfies the conditions of theorem \ref{techn5.2}. This is also a strongly admissible ring in the sense of \cite[def. 2.11]{Zhe2} and $N_B=1$, thus the rank of the corresponding geometric data is one (see \cite[th.3.3]{Zhe2}). On the other hand, for big $m$ we have $\dim_k B_m\sim m^2/4$ and $\dim_kL_m\sim m^2/2$ (in the notation of theorem \ref{techn5.2}). Therefore, $\rk \cf =2$ ( see the proof of theorem \ref{techn5.2}).

So, in general we have $\rk \cf \ge r$, where $r$ is the rank of the data.
\end{nt}

As it follows from propositions above, theorem \ref{techn5.2} and theorem \ref{dannye2}, to find new explicit
examples of partial differential operators in two variables, it is important to solve the following algebro-geometric
problem (especially in view of finding new quantum algebraically completely integrable systems).

\begin{pb}
\label{problem1}
It would be nice to classify (and/or to find a way how to construct) projective irreducible surfaces $X$ that have an irreducible ample effective $\dq$-Cartier divisor $C$ not contained in the singular locus of $X$, with the self-intersection index $(C^2)=1$ on $X $, and such that there are coherent sheaves satisfying the conditions of item~6 of definition \ref{geomdata}. Since any unirational curve is a rational curve (by L\"uroth's theorem), the curve $C$ has to be a rational curve (see item~\ref{uni} of theorem~\ref{techn5.2})
\end{pb}

\subsection{Geometric ribbons}
\label{ribbons}

In this section we give a construction of a ribbon and a torsion free sheaf on it (see \cite{Ku}) determined by the geometric data defined in \ref{geomdata}. This construction generalizes the constructions of ribbons coming from geometric data given in \cite{Ku1}.

We recall the general definition of ribbon from~\cite{Ku}.

\begin{defin}[\cite{Ku}] \label{ribbon}
A ribbon $(C, \ca )$ over a field $k$ is given by the following data.
\begin{enumerate}
\item\label{a}
 A reduced algebraic curve $C$ over $k$.
\item \label{b} A sheaf $\ca$ of commutative $k$-algebras
on $C$.
\item \label{c} A descending sheaf filtration $(\ca_i)_{i\in \sdz}$ of $\ca$ by
$k$-submodules which satisfies  the following axioms:
\begin{enumerate}
\item  \label{i} $\ca_i\ca_j \subset \ca_{i+j}$, $1\in \ca_0$ (thus $\ca_0$
is a subring, and for any $i \in \dz$ the sheaf $\ca_i$ is a
$\ca_0$-submodule);
\item \label{ii}
$\ca_0/\ca_1$ is the structure sheaf $\co_C$ of $C$;
\item
\label{iii} for each $i$ the sheaf $\ca_i/\ca_{i+1}$ (which is a
$\ca_0/\ca_1$-module by (\ref{i})) is a coherent sheaf on $C$,  and the sheaf $\ca_i/\ca_{i+1}
$ has no coherent subsheaf with finite support, and is
isomorphic to $\co_{C}$ on a dense open set;
\item
\label{iv}  $\ca =\limind\limits_{i \in \sdz} \ca_i$, and
$\ca_i=\limproj\limits_{j>0}\ca_i/\ca_{i+j}$ for each $i$.
\end{enumerate}
\end{enumerate}
\end{defin}

We note that if $C$ is also an irreducible curve, then condition~\eqref{iii} from definition~\ref{ribbon} is equivalent to the condition that for each $i$ the sheaf
$\ca_i/\ca_{i+1}$ is a rank $1$ torsion free coherent sheaf on $C$.

We recall also the general definition of a torsion free sheaf on a ribbon from~\cite{Ku}.

\begin{defin} \label{tfsh}
Let $\xo_{\infty}=(C,\ca )$ be a ribbon over a field $k$. We say
that $\cnn$ is a torsion free sheaf of rank $r$ on $\xo_{\infty}$ if
  $\cnn$ is a sheaf of $\ca$-modules on $C$ with a descending
filtration $(\cnn_i)_{i\in \sdz}$ of $\cnn$ by $\ca_0$-submodules
which satisfies the following axioms.
\begin{enumerate}
\item
 $\cnn_i\ca_j\subseteq \cnn_{i+j}$ for any $i,j$.
\item \label{ittf}
For each $i$ the sheaf $\cnn_i/\cnn_{i+1}$ is a coherent sheaf on $C$,
 and   the sheaf
$\cnn_i/\cnn_{i+1}$ has no coherent subsheaf with finite
support, and is isomorphic to $\co_{C^{\oplus r}}$ on a dense open
set.
\item
 $\cnn =\limind\limits_i \cnn_i$, and
$\cnn_i=\limproj\limits_{j>0}\cnn_i/\cnn_{i+j}$ for each $i$.
\end{enumerate}
\end{defin}

We note that if $C$ is also an irreducible curve, then condition~\eqref{ittf} from definition~\ref{tfsh} is equivalent to the condition that for each $i$ the sheaf
$\cnn_i/\cnn_{i+1}$ is a rank $r$ torsion free coherent sheaf on $C$.

Let $(X,C,P,\cf ,\pi ,\phi )$ be a geometric data from definition \ref{geomdata}, and $\cf$ is a coherent sheaf of rank $r$ on $X$ ($r$ is not necessary equal to the rank of geometric  data).

Then we can define a ribbon $ \xo_{\infty}=(C,\ca )$ and a torsion free sheaf $\cn$ on it as follows. Recall that subspaces $W,A$ in $k[[u]]((t))$ are defined by this data (see section~\ref{mapxi}) and $X\simeq \Proj (\tilde{A})$, $\cf \simeq \Proj (\tilde{W})$.

  Let $X_{\infty}=(C,\hat{\ca}_0)$ be a formal scheme, the formal completion of $X$  along $C$. Let's define a family of  sheaves $\ca_i=\hat{\cb}_{-i}$ on $X_{\infty}$ (see the end of section \ref{mapxi}).
  Since the functor $\cb_i\mapsto \hat{\cb}_i$ is an exact functor (see e.g. \cite[Corol.~9.8]{Ha}), we have ${\ca}_i\supset {\ca}_{i+1}$ for all $i$. Obviously, for all $i$ the sheaves $\cb_{i}/\cb_{i-1}\simeq \hat{\cb}_{i}/\hat{\cb}_{i-1}$ are torsion free on $C$ and $\ca_{0}/\ca_{1}\simeq \co_C$. The multiplication in the ring $A$ induces a multiplication map $\cb_i\cb_j \subset \cb_{i+j}$ and hence a multiplication map ${\ca}_i{\ca}_j\subset {\ca}_{i+j}$. So, the sheaf $\ca =\limind  {\ca}_i$ defines a structure of a ribbon $(C,\ca )$ according to the definition \ref{ribbon}. Analogously, let's define the sheaf $\cn =\limind {\cn}_i$, where $\cn_i=\hat{\cf}_{-i}$. The sheaves ${\cn}_i/{\cn}_{i+1}$ are torsion free coherent sheaves for all $i$ (cf. the end of section \ref{mapxi}).

Let's show that $\cn$ is a torsion free sheaf of rank $r$ on the ribbon $(C,\ca )$ in the sense of \cite[def.~11]{Ku} and that
the point $P$ is smooth for the sheaf $\cn$ in the sense of \cite[def.~12]{Ku}.
Since $P$ is a smooth point on $C$, we have to check that the map
$$
(\widehat{\cf_i/\cf_{i-1}})_P\otimes_{\hat{\co}_{C,P}}(\widehat{\cb_j/\cb_{j-1}})_P\longrightarrow (\widehat{\cf_{i+j}/\cf_{i+j-1}})_P
$$
induced by the multiplication map $\cf_i\cdot \cb_j\subset \cf_{i+j}$ is an isomorphism. This follows from the facts that $(\cf_i/\cf_{i-1})_P\simeq \co_{C,P}^r$, $(\cb_j/\cb_{j-1})_P\simeq \co_{C,P}$ (since they are torsion free and $P$ is a smooth point on $C$). From the last facts it follows also that $\cn$ is a torsion free sheaf of rank $r$ (cf. corollary \ref{freemod}).

So, we obtain the following proposition.
\begin{prop}
\label{construction}
Given a geometric data $q=(X,C,P,\cf ,\pi , \phi )$ from definition \ref{geomdata}, where $\cf$ is a coherent sheaf of rank $r$, we can canonically define a geometric data  $\tilde{q}$ consisting of a ribbon $(C,\ca )$ over a field $k$, a torsion free sheaf $\cn$ of rank $r$, a smooth $k$-point $P$ of the sheaf $\cn$, formal local parameters $u',t'$ and a trivialization $e_P: \hat{\cn}_{0,P} \rightarrow \hat{\ca}_{0,P}^r\simeq  k[[u',t']]^r$
(cf. \cite[def.14]{Ku}). So, we have a map
$$
\Phi : q \mapsto \tilde{q}.
$$

The construction of the data $\tilde{q}$ generalizes the construction of a geometric ribbon given in \cite[ex. 1]{Ku}. The data $\tilde{q}$ satisfies conditions of \cite[th.1]{Ku}.
\end{prop}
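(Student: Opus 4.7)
The plan is to verify piece by piece that the construction sketched in the paragraphs immediately preceding the statement actually yields a ribbon in the sense of definition \ref{ribbon}, a torsion free sheaf of rank $r$ on it in the sense of definition \ref{tfsh}, and a set of trivialization data compatible with the conditions of \cite[th.1]{Ku}. Since most of the building blocks are already recalled from section \ref{mapxi} and from the text above the statement, what remains is essentially a systematic check.

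First I would verify that $(C,\ca)$ with $\ca_i = \hat{\cb}_{-i}$ is a ribbon. Exactness of the formal completion functor along $C$ (see \cite[Corol.~9.8]{Ha}) reduces axioms \eqref{i}--\eqref{iii} of definition \ref{ribbon} to statements about the sheaves $\cb_i$ on $X$ already established at the end of section \ref{mapxi}: the multiplication $\cb_i \cb_j \subset \cb_{i+j}$ comes from the ring structure of $A$ together with its filtration; the identification $\cb_0/\cb_{-1} \simeq \co_C$ gives axiom \eqref{ii} after completion; and the fact that each $\cb_i/\cb_{i-1}$ is a torsion free coherent sheaf on $C$, combined with irreducibility of $C$, forces generic rank $1$ and supplies axiom \eqref{iii}. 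Axiom \eqref{iv} is immediate from the definition of $\ca$ as a direct limit and of each $\hat{\cb}_{-i}$ as an inverse limit of its truncations modulo powers of the ideal of $C$.

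Next I would verify that $\cn = \limind \cn_i$, $\cn_i = \hat{\cf}_{-i}$, is a torsion free sheaf of rank $r$ on $(C,\ca)$. The action $\cn_i \ca_j \subset \cn_{i+j}$ comes from the $A$-module structure on $W$. For the graded pieces $\cn_i/\cn_{i+1} \simeq (\cf_{-i}/\cf_{-i-1})^{\wedge}$, torsion-freeness on $C$ is again recalled at the end of section \ref{mapxi}; the generic rank equals $r$ because $\cf$ has rank $r$ on $X$ by hypothesis (cf. proposition \ref{criterion}), and the filtration $\cf_i$ produces subquotients of generic rank $r$ along $C$, as can be checked on the generic point of $C$. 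Smoothness of $P$ for $\cn$ in the sense of \cite[def.~12]{Ku} is, as already indicated in the text above, reduced to showing that the multiplication map $(\widehat{\cf_i/\cf_{i-1}})_P \otimes_{\hat{\co}_{C,P}} (\widehat{\cb_j/\cb_{j-1}})_P \to (\widehat{\cf_{i+j}/\cf_{i+j-1}})_P$ is an isomorphism; this follows because at the smooth point $P$ these stalks are free of ranks $r$ and $1$ respectively by corollary \ref{freemod}, and the product is nontrivial because a local equation $f$ of $C$ at $P$ generates $\co_X(-C)_P$.

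The formal local parameters are $t' = \pi(f)$ and $u' = \pi(g)$ as in item \ref{dat4} of definition \ref{geomdata}, and the trivialization $e_P$ is built from the embedding $\phi$ of item \ref{dat6}: corollary \ref{freemod} gives $\cf_P \simeq \co_P^r$, which after completion and application of $\phi$ yields an isomorphism $\hat{\cn}_{0,P} \simeq k[[u,t]]^r = k[[u',t']]^r$. The main obstacle I expect is verifying that these local data together satisfy the extra filtration and compatibility requirements of \cite[th.1]{Ku}; concretely, one must check that under $e_P$ the intrinsic filtration on $\hat{\cn}_{0,P}$ induced by the subsheaves $\cn_i$ matches the filtration of $k[[u',t']]^r$ by powers of $t'$. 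Both filtrations are governed by the order of vanishing along $C$, so the identification is forced by the construction, but the verification requires careful bookkeeping with formal completions along $C$ and with the graded pieces already identified in section \ref{mapxi}.
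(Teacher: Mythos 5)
Your proposal follows essentially the same route as the paper, whose proof of this proposition is precisely the construction carried out in the paragraphs preceding it: the same definitions $\ca_i=\hat{\cb}_{-i}$, $\cn_i=\hat{\cf}_{-i}$, exactness of completion along $C$, the torsion-freeness of the graded pieces recalled at the end of section \ref{mapxi}, and the freeness of the stalks at the smooth point $P$ (corollary \ref{freemod}) to obtain smoothness of $P$ for $\cn$. Two small wording corrections: the generic rank $1$ of $\cb_i/\cb_{i-1}$ is not forced by torsion-freeness and irreducibility of $C$ alone, but by the fact that these are the graded pieces of the $\nu_{t'}$-filtration (the local ring of $X$ at the generic point of $C$ is a discrete valuation ring); and the trivialization $e_P$ should be obtained from $\cf_P\simeq\co_P^r$ together with $\pi$, which identifies $\widehat{\co}_P$ with $k[[u',t']]$, rather than from $\phi$, since $k[[u,t]]$ and $k[[u',t']]$ differ when the rank of the geometric data exceeds one.
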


Note that if we start with a data $q'=(CM(X),C,P, CM(\cf) ,\pi ,\phi )$,
where $CM(X)$ and $CM(\cf)$ are the Cohen-Macaulaysations of $X$ and $\cf$ (see
appendix~B), then the data $\tilde{q'}=\tilde{q}$ (because it
follows from the construction of the data, theorem \ref{CM} and from
results of appendix~B). Remarkably, the following is true:
\begin{prop}
\label{reconstruction}
If $q,q'\in \cq_r$  and surfaces $X,X'$ are Cohen-Macaulay, and the data $(C,\ca ,P, u,t)$, $(C',\ca' ,P', u',t')$ constructed by the map $\Phi$ from $q$ and $q'$ are isomorphic (cf. \cite[def.14]{Ku}), then $X$ is isomorphic to $X'$.
\end{prop}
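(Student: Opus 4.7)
The plan is to recover $X$ from the ribbon data via a reconstruction procedure in the two-dimensional local field $k((u))((t))$, exploiting the Cohen-Macaulay hypothesis to characterize $A\subset k[[u]]((t))$ as an intersection. Recall that by the construction in section~\ref{mapxi}, we have $X\simeq\Proj\tilde A$ with the filtered ring $A\subset k[[u]]((t))$ extracted from the geometric data. So it is enough to show that the isomorphism class of $A$ (as a filtered $k$-subalgebra of $k[[u]]((t))$, up to the admissible base changes in definition~\ref{geomcategory}) is determined by the ribbon data $(C,\ca,P,u,t)$.

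First I would construct, from the ribbon data $(C,\ca,P,u,t)$, a subspace $\mathbb{A}\subset k((u))((t))$ playing the role of the Parshin adelic object: localizing $\ca_i$ at the generic point of $C$ and completing at the discrete valuation coming from $\ca_0\subset\ca$ produces, via the trivialization $e_P$ and the formal parameters $u,t$, an embedding $\ca\hookrightarrow k((u))((t))$, and $\mathbb{A}$ is the image. An isomorphism of the ribbon data in the sense of \cite[def.~14]{Ku} clearly induces an isomorphism between the resulting subspaces $\mathbb{A}$ and $\mathbb{A}'$ compatible with the admissible changes of local parameters $h$ from definition~\ref{geomcategory}, item~\ref{cat1.3}.

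The heart of the argument, and the main obstacle, is to establish the identity
$$A=\mathbb{A}\cap k[[u]]((t))$$
mentioned in \eqref{intersection} under the Cohen-Macaulay hypothesis on $X$. The inclusion $\subseteq$ is immediate from the definitions, since elements of $A_n\subset A$ come from $H^0(X,\co_X(ndC))$, embed into $\widehat\co_P$, and hence into $k[[u,t]]\subset k[[u]]((t))$; and on the other hand, pushed to the generic point of $C$ and completed, they lie in $\mathbb{A}$. For the reverse inclusion, take $a\in\mathbb{A}\cap k[[u]]((t))$. Then $a$ defines a compatible system of local sections of $\hat{\cb}_{-i}$ along $C$ for some $i\gg 0$ (lying in $\ca_{-i}$) whose germ at $P$ extends to $\widehat\co_{X,P}$, i.e.\ has no pole at $P$ along $C$. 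The Cohen-Macaulay condition on $X$ along $C$ (which, by theorem~\ref{CM}, costs nothing at points of $C$) allows us to invoke Hartogs-type extension: a section of $\co_X(iC)$ on the formal neighborhood of $C$ which is regular at the single divisor $C$ (after removing a codimension~$2$ locus, namely the point $P$ together with possibly finitely many non-Cohen-Macaulay points of $X$ not meeting $C$) extends uniquely to a section of $\co_X$ on some open neighborhood of $C$. Combined with the ampleness of $C$, which gives $H^0(X\setminus C,\co_X)\simeq\varinjlim H^0(X,\co_X(nC'))$, this yields $a\in A$.

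Granted the reconstruction $A=\mathbb{A}\cap k[[u]]((t))$, the analogous identity holds for $A'=\mathbb{A}'\cap k[[u]]((t))$. An isomorphism of the ribbon data produces a continuous $k$-algebra isomorphism $h:k[[u,t]]\to k[[u,t]]$ of the admissible type in definition~\ref{geomcategory}, item~\ref{cat1.3}, which extends to $k[[u]]((t))$ and carries $\mathbb{A}'$ to $\mathbb{A}$; intersecting with $k[[u]]((t))$ (which $h$ preserves, being admissible) gives a filtered $k$-algebra isomorphism $A\simeq A'$. Passing to the associated graded Rees construction we obtain $\tilde A\simeq\tilde{A'}$, whence $X=\Proj\tilde A\simeq\Proj\tilde{A'}=X'$, as required.
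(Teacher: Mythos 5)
Your overall route coincides with the paper's: the paper also reduces the statement to the coincidence of the generalized Krichever images $\mathbb{A},\mathbb{A}'\subset k((u))((t))$ of the two ribbons (via \cite[th.1]{Ku}) and to the identity $A=\mathbb{A}\cap k[[u]]((t))$, from which $X\simeq\Proj\tilde{A}\simeq\Proj\tilde{A}'\simeq X'$; the difference is that the paper obtains this identity and the reconstruction by invoking \cite[th.5,6]{Os}, whereas you attempt to prove the identity directly. Your closing step (an admissible $h$ preserves $k[[u]]((t))$ and the $t$-adic filtration, hence induces a graded isomorphism $\tilde{A}\simeq\tilde{A}'$) is fine.

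The gap is in your proof of the reverse inclusion $\mathbb{A}\cap k[[u]]((t))\subseteq A$. Such an element is, a priori, only a section of the completed sheaf $\hat{\cb}_i=\ca_{-i}$ on the formal neighbourhood of $C$ (your use of $a\in k[[u]]((t))$ to extend it formally across $P$ is fine). The step you call a ``Hartogs-type extension'' --- passing from this formal section to a section of $\cb_i$ on an honest open neighbourhood of $C$ --- is not what the Cohen--Macaulay condition gives: Hartogs/$S_2$ extension applies to genuine sections defined off a codimension-two subset of an open set, not to formal sections along a divisor. The actual content needed is the algebraization statement $H^0(X,\cb_i)\simeq H^0(\hat{X}_C,\hat{\cb}_i)=\varprojlim_n H^0\bigl(X,\cb_i\otimes\co_X/\co_X(-nC')\bigr)$, which follows from the exact sequences $0\to\cb_i(-nC')\to\cb_i\to\cb_i/\cb_i(-nC')\to 0$ together with $H^0(X,\cb_i(-nC'))=0$ and, crucially, $H^1(X,\cb_i(-nC'))=0$ for $n\gg 0$; this Enriques--Severi--Zariski--Serre type vanishing is exactly where the ampleness of $C$ and the Cohen--Macaulay ($S_2$) hypothesis enter, and your sketch never produces it (ampleness is used by you only for the tautological identification of $H^0(X\setminus C,\co_X)$ with a direct limit, which is not the crux). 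This missing cohomological step is precisely the content of the reconstruction theorems \cite[th.5,6]{Os} that the paper cites, so as written your argument presupposes the key point rather than proving it. Two smaller issues: $\mathbb{A}$ must be taken as $\varinjlim_i H^0(C\setminus P,\ca_i)$ expanded at $P$ (as in \cite{Ku}), not as the completed stalk of $\ca$ at the generic point of $C$ --- with your description the intersection $\mathbb{A}\cap k[[u]]((t))$ would also contain sections with poles at points of $C$ other than $P$, and the identity $A=\mathbb{A}\cap k[[u]]((t))$ would fail; and since $X$ is assumed Cohen--Macaulay in the proposition, there are no ``non-Cohen--Macaulay points'' to remove.
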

\begin{proof} The idea of the proof is to apply arguments from \cite[th.5,6]{Os} to the data $(X, dC, \tilde{P}, \co_X)$, $(X', d'C', \tilde{P}', \co_{X'})$, where $dC$, $d'C'$ are ample Cartier divisors and $\tilde{P}$, $\tilde{P}'$ are ample Cartier divisors on $dC$,$d'C'$ induced by Cartier divisors $P$,$P'$ on $C$,$C'$ and local parameters $u$, $u'$ (cf.~\cite[lemma 5]{Ku}). Since the ribbon data are isomorphic, their images under the generalized Krichever map coincide (see \cite[th.1]{Ku}). In this situation the algebras $A_{(0)}(\co_X)$, $A_{(0)}(\co_{X'})$ coincide (see the proof of \cite[th.6]{Os}), therefore the surfaces  $X,X'$ defined by these algebras will be isomorphic.

\end{proof}

\begin{nt}
As it follows from \cite[th.5,6]{Os}  $A_{(0)}(\co_X)=\mathbb{A}\cap k[[u]]((t))$ (in the notation of introduction). Thus, we obtain the equation \eqref{intersection}.
\end{nt}

\subsection{Glueing construction}
\label{glueing}

As in the one-dimensional case one gets interesting solutions of the KP-equation by algebraic curves which are obtained from glueing points (to get cuspidal or nodal curves, with non-trivial Jacobians and compactified Jacobians, see \cite{Ma,SW}), we hope that a similar construction for surfaces, starting from $\dpp^2$ or rational or other surfaces yields examples in our case to solve problem~\ref{problem1}. This hope is confirmed by the fact that practically all known examples of commutative rings of PDOs have such glued surfaces as spectral manifolds, see examples below.

We need a construction where we glue curves on a surface, or glue points of a curve on a surface. The problem is: given a projective surface $\tilde{X}$, a one-dimensional closed subscheme $Y$ and a surjective finite morphism $p:Y\rightarrow C$ to a curve $C$, find a surface $X$ with a curve $C\subset X$ and a morphism $n:\tilde{X}\rightarrow X$ such that $n(Y)=C$ (and $n|_Y$ is a given morphism $p$) and $n:\tilde{X}\backslash Y \simeq X\backslash C$ is an isomorphism.

In the work \cite{Fe} is given a construction of glueing closed subschemes on a given scheme. It can be done for many schemes with mild conditions, and the construction is a natural generalization of the construction for curves given in \cite{Se}:

\begin{theo}{(\cite[th.5.4]{Fe})}
\label{Ferrand}
Let $X'$ be a scheme, $Y'$ be a closed subscheme in $X'$ and $g:Y'\rightarrow Y$ be a finite morphism. Consider
the ringed space $X=X'\sqcup_{Y'}Y$ (the amalgam) and the cocartesian square
$$
\begin{diagram}
\node{Y'}
\arrow{s} \arrow{e,t}{g} \node{Y} \arrow{s,r}{u} \\
\node{X'} \arrow{e,t}{f} \node{X}
\end{diagram}
$$
Suppose that the schemes $X'$ and $Y$ satisfy the following property:
$$
\leqno{(AF)} \mbox{\quad All finite sets of points are contained in an open affine set}.
$$
Then:

a) $X$ is a scheme satisfying $(AF)$;

b) the square above is cartesian;

c) the morphism $f$ is finite and $u$ is a closed embedding;

d) $f$ induces an isomorphism of $X' - Y'$ on $X-Y$.
\end{theo}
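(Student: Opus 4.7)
The plan is to establish the affine case first and then globalize using the hypothesis $(AF)$. Suppose $X' = \Spec A'$, $Y' = V(I)$ for some ideal $I \subset A'$, and $Y = \Spec B$, with $g$ corresponding to a finite ring homomorphism $g^{\sharp}: B \to A'/I$. I form the fiber product ring
$$A = A' \times_{A'/I} B = \{(a',b)\in A'\times B : a' \bmod I = g^{\sharp}(b)\}.$$
The key algebraic facts to verify are: (i) the projection $A \to A'$ is finite, deduced from the finiteness of $A'/I$ over $B$ by lifting a finite set of module generators of $A'/I$ into $A'$ and assembling them into generators of $A'$ over $A$; (ii) the projection $A \to B$ is surjective with kernel $J = I \times \{0\}$, which is canonically identified as an $A'$-module with $I$ via $A \to A'$; (iii) $A/J \simeq B$ and the extension of $J$ to $A'$ is $I$, so $A'\otimes_A B \simeq A'/I$. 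From these one checks directly that $\Spec A$ with the induced maps is the pushout of $\Spec A' \leftarrow \Spec A'/I \to \Spec B$ in the category of ringed spaces (and so in schemes) by verifying the universal property on ring homomorphisms. In particular items (b), (c), (d) follow in the affine setting.

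For the global case, I use $(AF)$ on $Y$ to cover $Y$ by affine opens $V_{\alpha} = \Spec B_{\alpha}$; since $g$ is finite and so affine, $Y'_{\alpha} := g^{-1}(V_{\alpha})$ is affine in $Y'$. Then I use $(AF)$ on $X'$ to enlarge each $Y'_{\alpha}$ (or rather a suitable finite set of points in it) to an affine open $U'_{\alpha} \subset X'$ with $U'_{\alpha}\cap Y' = Y'_{\alpha}$, possibly after passing to principal refinements to ensure compatibility on overlaps. On each index $\alpha$ the affine construction produces $U_{\alpha} = U'_{\alpha} \sqcup_{Y'_{\alpha}} V_{\alpha} = \Spec (A'_{\alpha}\times_{A'_{\alpha}/I_{\alpha}} B_{\alpha})$, and I glue the $U_{\alpha}$ along the overlaps, whose structure is dictated by the universal property of the affine amalgam. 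The fact that these gluings are consistent reduces to the assertion that the affine pushout construction commutes with localization at the images of elements of $A$, which is a direct algebraic check on the fiber product ring.

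Finally, I verify property $(AF)$ for $X$ by taking a finite set of points $\{x_1,\dots,x_n\}\subset X$, splitting them into those in $u(Y)$ and those in $f(X' \setminus Y')$, lifting each to finite sets of points in $X'$ and $Y$ respectively, applying $(AF)$ to find affine opens there, and intersecting with one of the affine pieces $U_{\alpha}$ constructed above. Items (c) and (d) are local on the target and follow from the corresponding affine assertions (finiteness of $A \to A'$, and the isomorphism $A[1/s] \simeq A'[1/s]$ for $s$ in the image of $J \to A'$ i.e.\ on the complement of $V = \Spec B$).

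The main obstacle I anticipate is the gluing step: showing that the affine pushouts $U_{\alpha}$ patch together to form a scheme, and that this pushout construction is compatible with passage to principal open subsets. This is exactly where the $(AF)$ hypothesis intervenes, since pushouts of schemes along closed immersions do not exist in general; the hypothesis guarantees both the existence of the correct affine cover and that the local pushouts are themselves affine schemes of the expected form. Once the local-to-global step is in place, all four conclusions are essentially formal consequences of the affine computation.
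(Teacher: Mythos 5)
You should first note that the paper itself contains no proof of this statement: it is quoted verbatim from Ferrand (\cite[th.5.4]{Fe}), and what the paper actually proves afterwards is a different, special-case construction (surfaces, with the extra hypothesis that $p$ extends to a morphism $\tilde{p}$ on a neighbourhood of $Y$), where $X$ is built as a topological quotient with $\co_X=p^{-1}\co_C+n_*I_Y$ and finite generation is supplied by lemma~\ref{lemma1}. So there is no in-paper proof to match; your route is essentially Ferrand's original one, namely the affine case via the fibre product ring $A=A'\times_{A'/I}B$ followed by an $(AF)$-globalization. Your affine computations are correct: $A\to A'$ is module-finite (lift generators of $A'/I$ over $B$ and use $J=I\times\{0\}\subset A$), $A\to B$ is surjective with kernel the common ideal $J$, $A'\otimes_AB\simeq A'/I$ gives cartesianness, and $A_s\simeq A'_s$ for $s\in J$ gives conclusion d) away from $Y$.

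However, as written there are two genuine gaps. First, the globalization is set up in the wrong order: for a \emph{pre-chosen} affine cover $V_\alpha$ of $Y$, the property $(AF)$ does not provide affine opens $U'_\alpha\subset X'$ with $U'_\alpha\cap Y'=g^{-1}(V_\alpha)$; $(AF)$ only controls finite sets of points, whereas $g^{-1}(V_\alpha)$ is in general a positive-dimensional closed subscheme, and an affine open with prescribed trace on $Y'$ need not exist. The correct procedure is point-by-point: for $y\in Y$ put the finite fibre $g^{-1}(y)$ into an affine $U'$ using $(AF)$ on $X'$, use that $g$ is finite (hence closed) so that $g(Y'\setminus U')$ is closed and misses $y$, shrink $V$ accordingly to get $g^{-1}(V)\subseteq U'\cap Y'$, and then shrink again (prime avoidance, principal opens) to force the exact saturation $U'\cap Y'=g^{-1}(V)$; this saturation step is the technical heart of Ferrand's theorem and is not a ``principal refinement'' formality -- you name it as the anticipated obstacle but do not resolve it. Second, conclusion a) concerns the amalgam \emph{defined as a ringed space}, so verifying that $\Spec A$ satisfies ``the universal property on ring homomorphisms'' only identifies it as a pushout of affine schemes; you still must show that the ringed space $X'\sqcup_{Y'}Y$ is locally isomorphic to such a $\Spec A$, which requires that $\Spec A$ carry the quotient topology from $\Spec A'\sqcup\Spec B$ (this uses surjectivity and closedness of $\Spec(A'\times B)\to\Spec A$, i.e.\ the finiteness you proved) and that its structure sheaf be the fibre product sheaf, checked on principal opens by the localization compatibility $A_s\simeq A'_s\times_{(A'/I)_s}B_s$. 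Once this is done it is cleaner to define $X$ once and for all as the ringed-space amalgam and verify it is locally affine, rather than glue local pushouts, which introduces avoidable cocycle verifications.
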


Often the schemes after the glueing procedure are proper but not projective (see  \cite[\S~6]{Fe} and also \cite[prop.~5.6]{Fe}). Here we rewrite the construction from \cite{Fe} for surfaces with some extra conditions in another, but equivalent way.

We will need some more assumptions. The idea is to construct $X$ first as a topological space by taking the quotient space with respect to the equivalence relation $\triangle_X\cup (id\times p)^{-1}(\Gamma_p)$ in $X\times X$ (here $\Gamma_p\subset Y\times C$ is a graph of $p$). Then we have a topological quotient map $\tilde{X}\rightarrow X$, but it is not obvious how to make $X$ to a scheme.

For this we make the assumption that $p$ extends to a morphism $\tilde{p}:\tilde{X}_0\rightarrow C$, where $\tilde{X}_0\subset \tilde{X}$ is a (Zariski) open neighbourhood of $Y$.

Without loss of generality we may assume that $\tilde{X}_0=\tilde{X}$: by replacing $\tilde{X}$ by the closure of the graph of $\tilde{p}$ in $\tilde{X}\times C$ we can modify $\tilde{X}$ to $\tilde{\tilde{X}}\rightarrow \tilde{X}$ such that $\tilde{p}\sigma$ extends to a morphism of $\tilde{\tilde{X}}$ to $C$, this modification is outside of $\tilde{X}_0$ and after the glueing construction one can reverse this modification.

Then, making this topological construction as above, we get a factorization of the underlying continue map of $\tilde{p}$ as
$$
\begin{diagram}
\node{\tilde{X}}  \arrow{e,t}{\tilde{p}} \arrow{s,l}{n}  \node{C} \\
\node{X}   \arrow{ne}
\end{diagram}
$$
and one should define $\co_X=p^{-1}\co_C+n_*I_Y$ ($I_Y\subset \co_{\tilde{X}}$ ideal sheaf of $Y$).

Description with affine covering: for each $c\in C$ there exists an affine neighbourhood $V\subset C$ and  an affine neighbourhood $\tilde{U}$ of $p^{-1}(c)$ in $\tilde{X}$ such that $\tilde{U}\cap Y=\tilde{p}^{-1}(V)\cap Y$ (first: take any affine neighbourhood $U$ of $p^{-1}(c)$, then $Y\backslash U$ is closed and $F=p(Y\backslash U)$ is closed in $C$ and disjoint to $c$. Then we take $V\subset C\backslash F$ an affine neighbourhood of $c$, and $U=\tilde{U}\cap \tilde{p}^{-1}(V)$).
Then the diagram  of maps
$$
\begin{diagram}
\node{\tilde{U}}  \arrow{e}  \node{V} \\
\node{\tilde{U}\cap Y}  \arrow{n,J} \arrow{ne}
\end{diagram}
$$
corresponds to homomorphisms of coordinate rings
$$
\begin{diagram}
\node{\tilde{A}}  \arrow{s}  \node{R} \arrow{w}  \arrow{sw} \\
\node{\tilde{A}/I}
\end{diagram}
$$
and we define $A=R+I\subset \tilde{A}$. We have to prove that $A$ is a finitely generated $R$-algebra (and $\Spec (A)\subset X$ as a topological space).
\begin{lemma}
\label{lemma1}
Let $R\subset \tilde{A}$ be finitely generated domains over a field $k$ of Krull dimension 1 and  2, $I$ an ideal in $\tilde{A}$ such that $R\subset \tilde{A}/I$ is finite. Then $A=R+I\subset \tilde{A}$ is finitely generated and $\tilde{A}$ is finite over $A$.
\end{lemma}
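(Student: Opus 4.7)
The plan is to prove this in two steps: first show that $\tilde{A}$ is finite over $A$, then invoke the Artin--Tate lemma to deduce that $A$ itself is finitely generated over $k$. Note that the Krull dimension hypotheses are not really used in the proof; they only reflect the geometric context (curve inside surface).

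For the first step, I would use the finiteness assumption on $R\hookrightarrow \tilde{A}/I$. Pick elements $\bar{x}_1,\dots,\bar{x}_n\in \tilde{A}/I$ which generate it as an $R$-module, and lift them to $x_1,\dots,x_n\in\tilde{A}$ (one may take $x_1=1$). Then every element of $\tilde{A}$ can be written as $\sum_i r_ix_i+y$ with $r_i\in R$ and $y\in I$. Since $R\subset A$ and $I\subset A$, this shows
\[
\tilde{A} \;=\; Ax_1+\cdots+Ax_n,
\]
so $\tilde{A}$ is a finite $A$-module with generators $x_1,\dots,x_n$.

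For the second step, observe that we are exactly in the situation of the Artin--Tate lemma: the base ring $k$ is a field (hence Noetherian), the ring $\tilde{A}$ is finitely generated as a $k$-algebra by hypothesis, $A$ sits between $k$ and $\tilde{A}$, and we have just shown that $\tilde{A}$ is a finite $A$-module. The Artin--Tate lemma then gives that $A$ is finitely generated as a $k$-algebra, completing the proof.

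There is essentially no obstacle here: the whole argument boils down to an explicit finite generating set coming from lifting generators of $\tilde{A}/I$, followed by a textbook application of Artin--Tate. The only thing worth being a bit careful about is to remember that $R\subset A$ (so that the $R$-module generators of $\tilde{A}\bmod I$ already serve as $A$-module generators of $\tilde{A}$), which uses nothing more than $A=R+I$.
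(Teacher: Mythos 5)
Your proof is correct, but it takes a genuinely different route from the paper's. You first show directly that $\tilde{A}$ is a finite $A$-module by lifting $R$-module generators of $\tilde{A}/I$ (together with $1$) and using $R+I\subset A$, and then you invoke the Artin--Tate lemma (with $k\subset A\subset\tilde{A}$) to get finite generation of $A$ over $k$. The paper instead uses a refined Noether normalization (Bourbaki, Alg. Comm., ch.~V, \S~3, th.~1) to find $f_1,f_2\in\tilde{A}$ with $\tilde{A}$ finite over $k[f_1,f_2]$ and $f_1\in I$; since $\tilde{A}/I$ is finite over $R$, the element $f_2$ satisfies a monic relation $f_2^k+a_1f_2^{k-1}+\cdots+a_k=b$ with $a_i\in A$, $b\in I$, so the explicit subalgebra $A_0=k[f_1,a_1,\ldots,a_k,b]\subset A$ already has $\tilde{A}$ finite over it, and then $A$, being an $A_0$-submodule of the Noetherian $A_0$-module $\tilde{A}$, is finitely generated --- in effect the paper re-proves the Artin--Tate step by hand in this special situation, which also yields both conclusions at once. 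What your version buys: it is shorter, avoids the adapted Noether normalization entirely, and, as you note, makes no use of the Krull dimension hypotheses (which in the paper's argument are what make the choice of exactly two elements with $f_1\in I$ natural; they also ensure the hypotheses are non-vacuous, since $I=0$ would contradict the dimensions). What the paper's version buys: an explicit small finitely generated subalgebra $A_0\subset A$ over which $\tilde{A}$ is finite, with no appeal to Artin--Tate as a black box. Both arguments ultimately rest on the same Noetherian-submodule step, explicit in the paper and hidden inside Artin--Tate in yours.
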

\begin{proof} We can find $f_1, f_2\in \tilde{A}$ such that \\
1) $\tilde{A}$ is finite over $E=k[f_1,f_2]$;\\
2) $f_1\in I$ (see \cite[ch.V, \S 3, th.1]{Bu}).

Since $\tilde{A}/I$ is finite over $R$, there exist $a_1,\ldots ,a_k\in {A}$ such that $f_2^k+a_1f_2^{k-1}+\ldots +a_k=b\in I$; hence if $A_0=k[f_1,a_1,\ldots ,a_k,b]$ then $A_0\subset R+I$ and $\tilde{A}$ is finite over $A_0$ ($A_0\subset A_0[f_2]\subset \tilde{A}$), therefore $A=R+I$ is finitely generated over $k$.
\end{proof}

So, $\tilde{A}$ is a (partial) normalization and $I$ is the conductor from $A$ to $\tilde{A}$. One checks easily that \\
1) $\tilde{X}\stackrel{n}{\rightarrow} X$ is universally closed;\\
2) $X$ is separated:
$$
\begin{diagram}
\node{\tilde{X}\times \tilde{X}} \arrow{e,t}{(1)} \node{\tilde{X}\times X} \arrow{e,t}{(2)} \node{X\times X} \\
\node{\tilde{\bigtriangleup}}  \arrow{n,l,J}{\alpha} \arrow{e}  \node{\Gamma_n } \arrow{n,l,J}{\beta} \arrow{e} \node{\bigtriangleup}
\arrow{n,r,J}{\gamma}
\end{diagram}
$$
Since $(1)$ and $(2)$ are closed maps and $\Gamma_n$, $\bigtriangleup$ are locally closed, then the maps $\beta ,\gamma$ are closed embeddings.

So, the map $n$ is proper, and $X$ is a separated scheme of finite type over $k$. Since $n$ is also finite surjective and $\tilde{X}$ is proper over $k$, it follows that the structure map $X\rightarrow \Spec k$ is universally closed, hence $X$ is a proper over $k$ scheme.

Having this construction and general theorem \ref{Ferrand} we can give examples:
\begin{ex}
\label{1example}
Consider $\tilde{X}=\dpp^2$ with a morphism $p:\tilde{X}-(0:0:1)\rightarrow C=\dpp^1$, $p(x:y:z)=(x:y)$ and two lines $Y=2\dpp^1$, where $\dpp^1=(x:y:0)$ in $\dpp^2$. Clearly, $Y$ is an ample Cartier divisor on $\tilde{X}$.

In this case a Cartier divisor $\tilde{C}$ on the glued surface $X$ is defined which is given by the same equations in the induced local covering  (so, $Y=n^*\tilde{C}$). Since $Y$ is an ample divisor and $n$ is a finite surjective morphism of proper schemes, the divisor $\tilde{C}$ is also ample (see~\cite[ex.~5.7., ch.~3]{Ha}). Therefore, $X$ is projective, and the cycle map from appendix~A  gives $Z(\tilde{C})=2C$. So, $C$ is an ample $\dq$-Cartier divisor on $X$.
\end{ex}

\begin{ex}
Let $X'$ be a projective surface over $k$. Let $Y' = \coprod\limits_{i=1}^k \Spec I_i $, where each $I_i$ is a local Artinian $k$-algebra, be a zero-dimensional subscheme of $X'$. (For example, $Y'$ can be a finite number of closed points on $X'$, or $Y'$ can be a zero-dimensional subscheme concentrated in a closed point on $X'$.)  Let $Y=\Spec k$ be a point and $g:Y'\rightarrow Y$ a finite morphism.
Then the scheme $X$ constructed as in theorem \ref{Ferrand} is proper over $k$ (see e.g. \cite[\S~6.1]{Fe}) and it is projective by the same arguments as in the previous example (i.e. by~\cite[ex.~5.7., ch.~3]{Ha}).
\end{ex}

\begin{ex}
More generally, let $\pi :X'\rightarrow X$ be a normalisation of algebraic variety $X$. Then the conductor $\ci \subset \co_X$, $\ci \subset \pi_*\co_{X'}$  defines closed subschemes $Y\subset X$ and $Y'\subset X'$ with the finite morphism $Y'\rightarrow Y$. Then (keeping in mind theorem \ref{Ferrand}) we have $X\simeq  X'\sqcup_{Y'}Y$, because for any scheme $Z$ we obviously have
$$
\Hom (X,Z) \simeq \Hom (X',Z)\times_{\Hom (Y',Z)}\Hom (Y,Z) \mbox{\quad via ($\psi :X\rightarrow Z$) $\mapsto$ $(\psi \pi , \psi |_Y)$}.
$$

This remark can be applied to a series of known examples of
commuting PDOs containing the Schr\"odinger operator with a special
potential. In these examples the commutative rings are known to be
isomorphic to the rings of quasi-invariants (see e.g. \cite{FV} or
\cite{Ch} for details) all of which have the normalization equals to
$\dc [x_1,\ldots ,x_n]$ (moreover, they are Cohen-Macaulay as it is
shown e.g in \cite[th.8]{FV}). So, in all these examples the (affine)
spectral varieties are the glued $\da^n$. Note that for $n>1$ one
has to glue subschemes of codimension one in a normal variety to
obtain a Cohen-Macaulay variety which is not normal variety. Indeed,
if we glue subschemes of codimension more than one and obtain a
Cohen-Macaulay variety, it should be normal by Serre's criterion, a
contradiction.
\end{ex}

\begin{ex}
Using the remark from previous example let's note that the example 4.2 from \cite{Zhe2} can be obtained by glueing two lines $2\da^1$ on $\da^2$. Recall that in this example the (modified) Schur pair $(A,W)$ was constructed, where
$$
W=\langle 1+t, t^{-i}u^j, i\ge 1, 0\le j\le i \rangle \subset k[[u]]((t)), \mbox{\quad} A=k[t^{-2},t^{-3},ut^{-2}]\simeq k[x^2,x^3,h]
$$
The conductor of $A$ in the normalisation $\tilde{A}\simeq k[x,h]$ is $(x^2)$ (in $\tilde{A}$). So, $Y'=2\da^1$ and $Y=\da^1$ in the notations of the previous example. Recall also that the corresponding ring of commuting operators (though not PDO) is $B=k[P,Q,P']$, where
$$
P=\partial_2^2-2\frac{1}{(1-x_2)^2}(:\exp (-x_1\partial_1):),
$$
$$
Q=\partial_1\partial_2+\frac{1}{1-x_2}(:\exp (-x_1\partial_1):)\partial_1,
$$
$$
P'=\partial_2^3-3\frac{1}{(1-x_2)^{2}}(:\exp (-x_1\partial_1):)\partial_2-3\frac{1}{(1-x_2)^{3}}(:\exp (-x_1\partial_1):),
$$
and $(:\exp (-x_1\partial_1):)=1-x_1\partial_1+x_1^2\partial_1^2/2!-x_1^3\partial_1^3/3!+\ldots$.
We note that the space $W$ is infinitely generated over $A$; hence it defines a quasi-coherent sheaf on the projective surface. At last, it's easy to see that $A$ is Cohen-Macaulay (as a polynomial ring over $k[x^2,x^3]$, which is also Cohen-Macaulay, see \cite[th.33]{Mat}).
\end{ex}

We hope to return to the problem of constructing new examples in future works.

\section*{Appendix A. Cycle map}

For reader's convenience, we'll give below some facts about the cycle map for singular projective varieties (cf. \cite[\S 2.1]{Fu}).

Let $X$ be a projective irreducible $n$-dimensional variety over a field $k$. Let $\Div (X)$ be a group of Cartier divisors (equal to $H^0(X,\ k(X)^*/\co_X^*)$). Thus, a divisor $D$ is given by data $(U_{\alpha},f_{\alpha})$, where $f_{\alpha}\in k(X)^*$, $\{U_{\alpha}\}$ is an open covering of $X$, and $f_{\alpha}/f_{\beta}\in \co_X^*(U_{\alpha \beta})$.

If $C\subset X$ is an irreducible closed subvariety of codimension $1$,  define the order
$$
\ord_C(D)= l_{\co_{X,C}}(\frac{1}{f_{\alpha}}\co_{X,C}/\co_{X,C}\cap \frac{1}{f_{\alpha}}\co_{X,C})-l_{\co_{X,C}}(\co_{X,C}/\co_{X,C}\cap \frac{1}{f_{\alpha}}\co_{X,C}),
$$
where $l_{\co_{X,C}}$ denotes the length of a $\co_{X,C}$-module, and $\alpha$ is chosen so that $C$ meet $U_{\alpha}$ (the choice is irrelevant for the definition of $\ord_C(D)$).

Set the cycle corresponding to a divisor $D\in \Div(X)$ as
$$
{\rm Z}(D)=\sum_{\mbox{\small codim} \: C=1}\ord_C(D) \, C \mbox{.}
$$

So, we have a homomorphism ${\rm Z} \ : \ \Div(X) \rightarrow Z^{1}(X)$ to the group of cycles of codimension one on $X$ (which is the free abelian group generated by all irreducible subvarieties of codimension $1$ on $X$). The group $Z^{1}(X)$ is also called the group of Weil divisors on X and is denoted also by
 ${\rm WDiv}(X)$. Let's compute the kernel of the homomorphism $\rm Z$.

If $R$ is a one-dimensional local domain and $g\in \Quot (R)$, then we have to compute
$$
L_g= l_R(R/R\cap gR)-l_R(gR/R\cap gR) \mbox{,}
$$
which can be also expressed as follows. If we choose an element $a\in R$ such that $ag=b\in R$, then
\begin{multline*}
L_g=l_R(aR/aR\cap bR)-l_R(bR/aR\cap bR)= \\ =
l_R(R/aR\cap bR)-l_R(R/aR)-(l_R(R/aR\cap bR)-l_R(R/bR)) = l_R(R/bR)-l_R(R/aR) \mbox{.}
\end{multline*}
One checks that if $R\subset \tilde{R}\subset \Quot (R)$, and if  $\tilde{R}$ is a finitely generated  $R$-module, then
$$
L_g= l_R(R/bR)-l_R(R/aR)=l_R(\tilde{R}/b\tilde{R})-l_R(\tilde{R}/a\tilde{R})  \mbox{.}
$$
If $\tilde{R}$ is the integral closure of $R$, then  one sees that  $L_g=0$ if  $b/a=g\in \tilde{R}^*$. Thus
$$
\Ker (\Div (X)\stackrel{\rm Z}{\longrightarrow} Z^1(X))\supset H^0(X, \pi_*\co_{\tilde{X}}^*/\co_X^*)\simeq \Ker (\Pic(X)\stackrel{\pi^*}{\longrightarrow}\Pic (\tilde{X})),
$$
where $\tilde{X}\stackrel{\pi}{\rightarrow}X$ is the normalization of $X$. (We used an exact sequence of sheaves on $X$:
$$
0 \lto \co_X^* \lto \pi_* \co_{\tilde{X}}^* \lto \pi_* \co_{\tilde{X}}^* / \co_X^* \lto 0 \mbox{,}
$$
and the fact from~\cite[Th.~38]{Mat}: if $A$ is an integrally closed Noetherian domain, then   \linebreak $A = \bigcap\limits_{\mbox{\small ht} \, \nu =1} A_{\nu}$, where the intersection is taken over all prime ideals of height one.)

One can define the semigroup of effective Cartier divisors $\Div^+(X) \subset \Div(X)$ (given by data $(U_{\alpha}, f_{\alpha})$, where
$f_{\alpha} \in \co_X(U_{\alpha}) \cap k(X)^*$), and the semigroup of effective Weil divisors ${\rm WDiv^+(X)} \subset {\rm WDiv(X)}$  (given by the formal finite sums of cycles of codimension one with positive integer coefficients). It is easy to see that ${\rm Z} ({\Div^+(X)}  ) \subset {\rm WDiv^+(X)}$. Moreover,
from the above description of the map ${\rm Z}$ it follows that the cycle map $\rm Z$ restricted to the semigroup of effective Cartier divisors $\Div^+(X)$ is an injective map to the semigroup of effective Weil divisors $\WDiv^+(X)$ not contained in the singular locus.

\section*{Appendix B. Cohen-Macaulaysation}

In this appendix we give a construction of Cohen-Macaulaysation of a surface and of a sheaf. This construction can be probably considered as one of "folklore results", confer for example discussions about Macaulayfication functor in \cite[sec. 3]{Bur} or the Cohen-Macaulay "resolution of singularities" in \cite{Fa}. We decided to include this constructions here because we were not able to find an appropriate reference.

 For a Noetherian domain A define
$$
 A'= \bigcap_{\heit \wp =1}A_{\wp}
$$
to be the intersection of all localizations with respect to prime ideals of height 1.
We will say that $\dpth (A)>1$ if the condition $(S_2)$ from \cite[ch.~7, \S~17, (17.I)]{Mat} holds, i.e. it holds $\dpth (A_{\wp})\ge \inf (2, \heit (\wp ))$ for all $\wp\in \Spec (A)$.
Then one proves

\begin{lemmaB}
\label{lemma2} Assume $\dim (A)>1$. Then $A' = A$ if and only if
$\dpth (A)>1$.
\end{lemmaB}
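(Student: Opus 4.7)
\medskip
\noindent\textbf{Proof plan.} The idea is to reduce both directions to the standard fact that, for a Noetherian domain, the $(S_2)$ condition is equivalent to saying that for every nonzero $a\in A$, all associated primes of $A/aA$ have height $1$ (i.e.\ no embedded primes). Indeed, if $\wp \in \Ass_A(A/aA)$, then $\wp A_\wp \in \Ass_{A_\wp}(A_\wp/aA_\wp)$, so $\dpth(A_\wp/aA_\wp)=0$; since $a$ is $A_\wp$-regular, $\dpth A_\wp = 1$, and $(S_2)$ forces $\heit \wp \le 1$, hence $=1$ (as $a\in\wp$, $a\neq0$). Conversely, if $\dpth A_\wp = 1$ for some $\wp$ with $\heit\wp\ge 2$, pick any non-zero-divisor $a\in\wp$; then $\wp A_\wp$ consists of zero-divisors on $A_\wp/aA_\wp$, so by prime avoidance $\wp A_\wp\in\Ass_{A_\wp}(A_\wp/aA_\wp)$, hence $\wp\in\Ass_A(A/aA)$.

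\smallskip
\noindent For the implication $\dpth(A)>1 \Rightarrow A'=A$: take $x\in A'$, write $x=b/a$ with $a,b\in A$, $a\neq 0$, and consider the ideal $I=(aA:_Ab)=\{c\in A \mid cb\in aA\}$. The map $A/I\hookrightarrow A/aA$, $\bar c\mapsto \overline{cb}$, is injective, so $\Ass_A(A/I)\subset \Ass_A(A/aA)$, which consists only of height-$1$ primes by the observation above. On the other hand, for every height-$1$ prime $\mathfrak{q}$, the hypothesis $x\in A_\mathfrak{q}$ gives $I_\mathfrak{q}=A_\mathfrak{q}$, i.e.\ $I\not\subset\mathfrak{q}$. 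Since every associated prime of $A/I$ contains $I$ and has height $1$, this forces $\Ass_A(A/I)=\emptyset$, hence $A/I=0$, hence $x\in A$.

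\smallskip
\noindent For the reverse implication $A'=A \Rightarrow \dpth(A)>1$: assume $(S_2)$ fails and produce an element of $A'\setminus A$. By the preliminary observation there is a nonzero $a\in A$ and an embedded associated prime $\wp\in\Ass_A(A/aA)$ with $\heit\wp\ge 2$. Choose $c\in A$ with $\Ann_A(c+aA)=\wp$; then $\wp c\subset aA$ but $c\notin aA$. Set $x=c/a\in \Frac(A)$; then $x\notin A$. For any height-$1$ prime $\mathfrak{q}$ we have $\wp\not\subset\mathfrak{q}$ (since in any Noetherian domain $\wp\subset\mathfrak{q}$ implies $\heit\wp\le \heit\mathfrak{q}$), so pick $p\in\wp\setminus\mathfrak{q}$; then $px\in A$ and $p$ is a unit in $A_\mathfrak{q}$, whence $x\in A_\mathfrak{q}$. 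Thus $x\in A'\setminus A$, contradicting $A=A'$.

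\smallskip
\noindent The main technical point is the equivalence between $(S_2)$ and the absence of embedded primes in $A/aA$ for all nonzero $a$; once this is in hand, both directions are short ideal-theoretic manipulations. The assumption $\dim A>1$ is used implicitly: it guarantees the existence of primes of height $\ge 2$ (so the $(S_2)$ condition is non-vacuous) and of non-trivial height-$1$ primes over which to localize.
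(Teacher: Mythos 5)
Your proof is correct, but it takes a genuinely different route from the paper's in both directions. For ``$\dpth(A)>1 \Rightarrow A'=A$'' the paper also works with the denominator ideal $I=\{s\in A \mid sx\in A\}$ of an element $x\in A'$ and notes that $I$ lies in no height-one prime, but instead of comparing associated primes it extracts a regular sequence $(a,b)$ of length two inside $I$ (possible because, under $(S_2)$, every prime containing $I$ has depth at least two in the localization --- a point the paper leaves implicit) and finishes with the one-line identity $a(xb)-b(xa)=0$: since $b$ is a non-zero-divisor modulo $aA$, this forces $xa\in aA$, hence $x\in A$. For the converse the paper argues directly rather than contrapositively: from $A'=A$ it writes $fA=\bigcap_{\heit\wp=1}(fA_\wp\cap A)$, observes that each term is either $A$ or $\wp$-primary, so the principal ideal $fA$ admits a primary decomposition without embedded components, and deduces the depth condition via Atiyah--Macdonald; you instead manufacture an explicit element $c/a\in A'\setminus A$ from an embedded associated prime of $A/aA$. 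Both arguments ultimately rest on the same fact --- for a Noetherian domain, $(S_2)$ is equivalent to the absence of embedded primes in $A/aA$ for all nonzero $a$ --- which you isolate, prove, and use uniformly, whereas the paper keeps it implicit inside the primary-decomposition language. Your treatment buys a more systematic and reusable argument, making visible exactly where localization of associated primes, the depth drop modulo a regular element, and prime avoidance enter; the paper's buys brevity, since the regular-sequence trick avoids the machinery of associated primes altogether and its derivation of unmixedness of $fA$ is in the same style as the primary-ideal arguments used later (e.g.\ in the proof of Theorem \ref{CM}). One small point you may wish to spell out: when $(S_2)$ fails for a domain, the offending prime $\wp$ automatically satisfies $\dpth A_\wp=1$ and $\heit\wp\ge 2$ (depth cannot be zero at a nonzero prime of a domain), which is what your contrapositive step tacitly uses when invoking the preliminary observation.
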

\begin{proof} If $A' = A$, then for any non-zero non-invertible $f\in A$ we have (since $A$ is a domain)
$$
fA=\bigcap_{\heit \wp =1}fA_{\wp}= \bigcap_{\heit \wp =1}(fA_{\wp}\bigcap A),
$$
and the ideals $fA_{\wp}$ either coincide with $A_{\wp}$ or are $\wp$-primary in the rings $A_{\wp}$, since
the rings $A_{\wp}$ are Noetherian local rings of dimension one.  So, the ideals $fA_{\wp}\bigcap A$ in $A$
either coincide with $A$ or are $\wp$-primary in $A$. Thus, there is a primary decomposition for $fA$ without
 embedded components (cf. \cite[th.~4.10]{AM}) and therefore there are non-invertible non-zero divisors in $A_{\wp}/fA_{\wp}$ for any $\wp \supset f$ of height one,
 because $\dim A>1$ (cf. \cite[prop.~4.7]{AM}). Hence $\dpth (A)>1$.

Now assume $\dpth (A)>1$. If $x\in  A'$ then the set of all elements $s\in A$
such that $sx\in A$ is an ideal, not contained in any prime ideal of height 1.
Since $\dpth (A)>1$, there is a regular sequence $(a,b)$ in this ideal.
Since $a(xb) - b(xa) = 0$, it follows that $xa\in aA$, thus $x\in A$.
\end{proof}

Assume $A$ has the following property:
$$
\leqno{(*)}
\begin{array}{l}\mbox{\quad Every prime ideal of height
$1$ in the normalization of $A$ intersects $A$  in a prime ideal }
\\ \quad \mbox{of height $1$.}
\end{array}
$$
This property is satisfied for example for domains of finite type
over a field or over the integers by~\cite[prop.~5.6, corol.~5.8,
th.~5.10, th.~5.11]{AM} and by~\cite[vol.~I, ch.~V, th.~9]{ZS}.

Then for every Noetherian domain $B$ between $A$ and its
normalization with $\dpth (B)>1$  we have that $A'$ is contained in
$B$ (since $B_{\wp}\supset A_{\wp}\cap A$ for any prime ideal $\wp$
of $B$ of height one by (*) and $B=B'$ by lemma~\ref{lemma2}).

\begin{lemmaB}
\label{lemma2.5}
Assume $\dim A>1$, $A$ satisfies $(*)$ and has the property that its normalization is a finite $A$-module. Then we have

(i) $A'$ is a finite $A$-module;

(ii) $\dpth (A')>1$ and $A'$ is contained in any subdomain of the
normalization which contains $A$ and has depth greater than one;

(iii) for a  non-zero $f\in A$ we have  $A[1/f]' = A'[1/f]$.
\end{lemmaB}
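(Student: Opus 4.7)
The plan is to prove the three parts in order, reducing everything to the identity $A' = \bigcap_{\heit \wp = 1} A_\wp$ and to one key observation in part (ii). For (i) my goal is to show $A' \subset \tilde A$ and then deduce finiteness from the hypothesis that $\tilde A$ is a finite $A$-module. Given $x \in A'$ and a height one prime $\mathfrak q$ of $\tilde A$, assumption $(*)$ guarantees that $\wp := \mathfrak q \cap A$ has height one in $A$, so $x \in A_\wp$. A direct check (writing $x = a/s$ with $s \in A \setminus \wp$ and noting $s \in \tilde A \setminus \mathfrak q$) shows $A_\wp \subset \tilde A_{\mathfrak q}$ inside the common fraction field, hence $x \in \tilde A_{\mathfrak q}$. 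Applying the Matsumura result recalled in Appendix~A to $\tilde A = \bigcap_{\mathfrak q} \tilde A_{\mathfrak q}$ yields $x \in \tilde A$. Thus $A'$ is an $A$-submodule of the finite $A$-module $\tilde A$, so it is finite by Noetherianness of $A$.

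The hinge for (ii) is the equality $A'_\wp = A_\wp$ for every height one prime $\wp$ of $A$, where $A'_\wp$ denotes $S^{-1}A'$ with $S = A \setminus \wp$. This follows from the sandwich $A \subset A' \subset A_\wp$ (the second inclusion because $A_\wp$ is one of the factors in the intersection defining $A'$): localizing at $S$ gives $A_\wp \subset A'_\wp \subset A_\wp$. Consequently, for any height one prime $\mathfrak P$ of $A'$ with $\wp = \mathfrak P \cap A$, the ring $A'_\wp = A_\wp$ is local, so $\mathfrak P$ is the unique prime of $A'$ above $\wp$, $A'_{\mathfrak P} = A_\wp$, and $\heit \wp = \heit \mathfrak P = 1$. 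Conversely, going-up for the integral extension $A \subset A'$ (finite by (i)) together with incomparability produces exactly one height one prime of $A'$ above each height one prime of $A$. This bijection yields
$$
(A')' \;=\; \bigcap_{\heit \mathfrak P = 1} A'_{\mathfrak P} \;=\; \bigcap_{\heit \wp = 1} A_\wp \;=\; A',
$$
and Lemma~B.1 applied to $A'$ (of dimension $\dim A' = \dim A > 1$, since finite integral extensions preserve Krull dimension) gives $\dpth A' > 1$. The second claim of (ii), that $A'$ sits inside any Noetherian intermediate $A \subset B \subset \tilde A$ with $\dpth B > 1$, is exactly the statement already proven in the paragraph preceding the lemma, and I will simply cite it.

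For (iii) the plan is a straightforward denominator-clearing argument. The inclusion $A'[1/f] \subset A[1/f]'$ is immediate: for $x \in A'$ and a height one prime $\wp$ of $A$ with $f \notin \wp$, the element $f$ is invertible in $A_\wp = (A[1/f])_{\wp[1/f]}$. For the reverse I take $y \in A[1/f]'$ and seek $N$ with $f^N y \in A'$. For height one primes $\wp$ not containing $f$ the condition is automatic. The finitely many primes to worry about are the minimal primes $\wp_1,\dots,\wp_m$ of $(f)$, which are precisely the height one primes containing $f$. For each $\wp_i$, the ring $A_{\wp_i}$ is a one-dimensional local Noetherian domain with $f$ in its maximal ideal, so $A_{\wp_i}[1/f]$ has only the zero prime and is therefore a field squeezed between $A$ and $\Quot A$, so equal to $\Quot A$. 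This produces $n_i$ with $f^{n_i} y \in A_{\wp_i}$, and $N := \max n_i$ works uniformly, giving $f^N y \in \bigcap_\wp A_\wp = A'$ and hence $y \in A'[1/f]$.

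The main obstacle is the bijection of height one primes in part (ii); once the observation $A'_\wp = A_\wp$ is in hand, the rest of (ii) follows mechanically through Lemma~B.1, and both (i) and (iii) become routine localization manipulations built on the same structural fact that $A'$ is sandwiched between $A$ and each $A_\wp$.
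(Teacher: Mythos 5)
Your proof is correct, and for parts (i) and (ii) it takes a genuinely different route from the paper's. For (i) you prove $A'\subset\tilde A$ directly: by $(*)$ each height one prime $\mathfrak q$ of the normalization contracts to a height one prime $\wp$ of $A$, so $A'\subset A_\wp\subset\tilde A_{\mathfrak q}$, and Matsumura's theorem $\tilde A=\bigcap_{\heit\mathfrak q=1}\tilde A_{\mathfrak q}$ finishes; the paper instead quotes the observation preceding the lemma (that $A'$ lies in every intermediate Noetherian domain of depth $>1$, applied to $B=\tilde A$). Both then get finiteness from finiteness of the normalization. For the depth claim in (ii) the paper re-runs the primary-decomposition argument of lemma~\ref{lemma2} inside $A'$, writing $fA'=\bigcap_{\heit\wp=1}(fA_\wp\cap A')$ and using incomparability to check that the primes $\wp A_\wp\cap A'$ have height one in $A'$; you instead establish $(A')'=A'$ --- via the clean identity $A'_\wp=A_\wp$ for height one $\wp$ of $A$, which gives a unique prime $\mathfrak P$ of $A'$ over $\wp$ with $A'_{\mathfrak P}=A_\wp$ one-dimensional, hence of height one --- and then apply lemma~\ref{lemma2} to $A'$ as a black box (legitimate: $A'$ is a Noetherian domain, being a finite $A$-module, and $\dim A'=\dim A>1$). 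Your route is shorter because it reuses lemma~\ref{lemma2}; the paper's avoids having to discuss $(A')'$ at all. Part (iii) is essentially the same computation in both: you clear $f$-denominators at the finitely many height one primes containing $f$ (where $A_{\wp_i}[1/f]=\Quot A$), while the paper phrases it as commuting localization with the finite part of the intersection.

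One sentence of yours is not justified as written: the claim that every height one prime $\mathfrak P$ of $A'$ contracts to a height one prime of $A$ is argued by invoking $A'_\wp=A_\wp$ for $\wp=\mathfrak P\cap A$, but that identity was proved only when $\wp$ already has height one in $A$ --- exactly what is in question there; in the bare generality of the lemma this direction of your ``bijection'' would need a separate argument (in the geometric situation it follows from catenarity). Fortunately it is also unnecessary: to get $(A')'\subseteq A'$ you only need that each height one prime $\wp$ of $A$ carries some height one prime of $A'$ above it with localization equal to $A_\wp$, which you do prove; extra height one primes of $A'$ can only shrink the intersection defining $(A')'$, which in any case contains $A'$. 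So after dropping that one assertion your argument is complete.
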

\begin{proof} As we have seen above, $A'$ is contained in any subdomain of the
normalization which contains $A$ and has depth greater than 1. Since the normalization of $A$ is a finite $A$-module,
$A'$ is also a finite $A$-module.
To prove that $\dpth (A')>1$ we can argue as in the proof
of lemma~\ref{lemma2}. For any non-zero non-invertible $f\in A'$ we have
$$
fA'=\bigcap_{\heit \wp =1}fA_{\wp}= \bigcap_{\heit \wp =1}(fA_{\wp}\bigcap A'),
$$
and the ideals $fA_{\wp}$ either coincide with $A_{\wp}$ or are $\wp$-primary in the rings $A_{\wp}$, since the
rings $A_{\wp}$ are Noetherian local rings of dimension one.  So, the ideals $fA_{\wp}\bigcap A'$ in $A'$ either
coincide with $A'$ or are $\wp'$-primary in $A'$, where $\wp'$ is the prime ideal $\wp A_{\wp}\cap A'$. Note that
 $\heit (\wp' )=1$. Indeed, $\wp' \cap A=\wp$ and $\heit (\wp )=1$. If $\heit \wp'>1$ then there is a prime ideal
  $\wp_1\subset \wp'$ of height one such that $\wp_1\cap A=\wp$ (since $A'$ is integral over $A$, $\wp_1\cap A\neq 0$).
  But then $\wp_1=\wp'$ by~\cite[corol.~5.9]{AM}. Thus, there is a primary decomposition for $fA'$ without embedded
  components and therefore there are non-invertible non-zero divisors in $A'_{\wp}/fA'_{\wp}$ for any $\wp \supset f$ of height one,
  because $\dim A>1$ (cf.~\cite[prop.~4.7]{AM}). Hence $\dpth (A')>1$.

To prove (iii), first note that $A'$ is the intersection of

1) all localisations with respect to all prime ideals of height 1 that don't contain $f$;

2) a finite number of localisations with respect to prime ideals  of height 1 that  contain $f$.

Finite intersections and localisations commute, and the localisation of rings in item 2) with respect to $f$ is the quotient field $\Quot A$. So, we can omit all these components of the intersection. The rings in item 1) coincide with the localisations of $A[1/f]$ with respect to the same ideals. So, the equality $A'[1/f]=A[1/f]'$ follows.
\end{proof}

\begin{ntB} \em For $2$-dimensional domains the property $\dpth (A)>1$ is
equivalent with the property that $A$ is a Cohen-Macaulay ring, see
\cite[ch.~7, \S~17, (17.I)]{Mat}.
\end{ntB}

 Now we have the following geometric interpretation of the facts above.
Let $X$ be a two-dimensional integral scheme which is of finite type
over a field or over the integers. Let $P(X)$ be the subspace of all
points of height 1 with the restricted structure sheaf. Then the
direct image of the structure sheaf of $P(X)$ under the embedding of
$P(X)$ into $X$ is a coherent sheaf of algebras on $X$ (since
$P(X)(U)=A'$ for any affine $U=\Spec A$). Let $CM(X)$ be the
relative spectrum over $X$ of this algebra. Then, using lemma
\ref{lemma2.5} one obtains:

(i) $CM(X)$ is an integral schema, finite and birational over $X$, and $CM(X)$ is a
Cohen-Macaulay scheme.

(ii) Any integral $X$-scheme $Y$ which is finite and birational over $X$ and
which is Cohen-Macaulay, admits a unique factorization over $CM(X)$.

We will also {\em call} the scheme $CM(X)$ as the {\em
Cohen-Macaulaysation} of the scheme $X$.

\begin{ntB} \em
In the same way one can define the Cohen-Macaulaysation of a sheaf.

In the following let $X$ be an integral surface and $\cf$ a torsion free coherent sheaf. Let $i: P(X) \rightarrow X$ be as above.
From $\cf$ we define a sheaf $CM(\cf):= i_* (\cf |_{P(X)})$ on $X$.
Then one proves with similar arguments as above.

(i) $\cf$ is a Cohen-Macaulay sheaf if and only if $\cf =CM(\cf)$.

(ii) If $\cf$ is Cohen-Macaulay, and $\pi :X' \rightarrow X$ is the Cohen-Macaulaysation of $X$, then $\cf$ has a unique $\pi_*\co_{X'}$-module structure, so it comes from a Cohen-Macaulay sheaf $\cf'$ on $X'$ as $\cf =\pi_*\cf'$.

(iii) If $X$ is Cohen-Macaulay, then $\cf^{\vee}={{\cal{H}}om}_{\co_X}(\cf ,\co_X)$ (see, e.g., \cite[Lemma~3.1]{Bur}) is Cohen-Macaulay for any torsion free coherent sheaf $\cf$ on $X$. Therefore, we have that
$$\cf \subset CM(\cf) \subset \cf^{\vee \vee}$$
 (because for any open $U \subset X$ and the natural embedding $i: P(U) \hookrightarrow U$ we have
 \begin{multline*}{\Hom}_{\co_U}(i_*(\cf |_{P(U)}) ,\co_U) =  {\Hom}_{\co_U}(i_*(\cf |_{P(U)} ,i_*(\co_U |_{P(U)}))= \\ ={\Hom}_{\co_{P(U)}}(i^* i_*(\cf |_{P(U)}) ,\co_{P(U)}) =
 {\Hom}_{\co_{P(U)}}( \cf |_{P(U)}, \co_{P(U)}) =
 {\Hom}_{\co_U}(\cf |_U , \co_U) \mbox{,}
 \end{multline*} and the sheaves $\cf$, $i_*(\cf |_{P(X)})$ are torsion free) and by (i) the sheaf $CM(\cf)$ is minimal among Cohen-Macaulay sheaves $\cg \supset \cf$ such that $\cg /\cf$ is a torsion sheaf.

(iv) If $\cg \subset \cf$ is a coherent subsheaf of a torsion free coherent Cohen-Macaulay  sheaf $\cf$ then $\cg$ is Cohen-Macaulay if and only if no primary component of $\cg$ is associated to a closed point.
\end{ntB}

\noindent H. Kurke,  Humboldt University of Berlin, department of
mathematics, faculty of mathematics and natural sciences II, Unter
den Linden 6, D-10099, Berlin, Germany \\ \noindent\ e-mail:
$kurke@mathematik.hu-berlin.de$

\vspace{0.5cm}

\noindent D. Osipov,  Steklov Mathematical Institute, algebra and number theory
department, Gubkina str. 8, Moscow, 119991, Russia \\ \noindent
e-mail:
 ${d}_{-} osipov@mi.ras.ru$

\vspace{0.5cm}

\noindent A. Zheglov,  Lomonosov Moscow State  University, faculty
of mechanics and mathematics, department of differential geometry
and applications, Leninskie gory, GSP, Moscow, \nopagebreak 119899,
Russia
\\ \noindent e-mail
 $azheglov@mech.math.msu.su$

\end{document}